\newtheorem{theorem}{Theorem}[section]
\newtheorem{lemma}[theorem]{Lemma}
\newtheorem{corollary}[theorem]{Corollary}
\newtheorem{definition}[theorem]{Definition}
\newtheorem{proposition}[theorem]{Proposition}
\newtheorem{assumption}[theorem]{Assumption}
\newtheorem{remark}[theorem]{Remark}
\def\thelemma{\arabic{section}.\arabic{lemma}}
\def\thetheorem{\arabic{section}.\arabic{theorem}}
\def\thecorollary{\arabic{section}.\arabic{corollary}}
\def\thedefinition{\arabic{section}.\arabic{definition}}
\def\theexample{\arabic{section}.\arabic{example}}
\def\theproposition{\arabic{section}.\arabic{proposition}}
\def\theassumption{\arabic{section}.\arabic{assumption}}
\def\theremark{\arabic{section}.\arabic{remark}}
\newcommand{\manualnames}[1]{
\def\thelemma{#1.\arabic{lemma}}
\def\thetheorem{#1.\arabic{theorem}}
\def\thecorollary{#1.\arabic{corollary}}
\def\thedefinition{#1.\arabic{definition}}
\def\theexample{#1.\arabic{example}}
\def\theproposition{#1.\arabic{proposition}}
\def\theassumption{#1.\arabic{assumption}}
\def\theremark{#1.\arabic{remark}}
}
\newcommand{\noi}{\noindent}
\newcommand{\ra}{\rightarrow}
\newcommand{\Ra}{\Rightarrow}
\newcommand{\F}{\mathbb{F}}
\newcommand{\R}{\mathbb{R}}
\newcommand{\p}{\mathbb{P}}
\newcommand{\N}{\mathbb{N}}
\newcommand{\la}{\lambda}
\newcommand{\sig}{\sigma}
\newcommand{\eps}{\varepsilon}
\newcommand{\almod}{\hat{\alpha}}
\newcommand{\ph}{\varphi}
\newcommand{\al}{\alpha}
\newcommand{\del}{\delta}
\newcommand{\om}{\omega}
\newcommand{\Gam}{\mathnormal{\Gamma}}
\newcommand{\Del}{\mathnormal{\Delta}}
\newcommand{\Th}{\mathnormal{\Theta}}
\newcommand{\Sig}{\mathnormal{\Sigma}}
\newcommand{\Ups}{\mathnormal{\Upsilon}}
\newcommand{\Om}{\mathnormal{\Omega}}
\newcommand{\wiun}{W_i^{u,N}}
\newcommand{\C}{{\mathbb C}}
\newcommand{\D}{{\mathbb D}}
\newcommand{\PP}{{\mathbb P}}
\newcommand{\calB}{{\cal B}}
\newcommand{\calC}{{\cal C}}
\newcommand{\calD}{{\cal D}}
\newcommand{\calF}{{\cal F}}
\newcommand{\calG}{{\cal G}}
\newcommand{\calK}{{\cal K}}
\newcommand{\calL}{{\cal L}}
\newcommand{\calM}{{\cal M}}
\newcommand{\calS}{{\cal S}}
\newcommand{\frH}{\mathfrak{H}}
\newcommand{\lan}{\langle}
\newcommand{\ran}{\rangle}
\newcommand{\skp}{\vspace{\baselineskip}}
\newcommand{\supp}{{\rm supp}}
\newcommand{\w}{\wedge}
\newcommand{\abs}[1]{\lvert#1\rvert}
\newcommand{\To}{\Rightarrow}
\newcommand{\iy}{\infty}
\newcommand{\io}{\iota}
\newcommand{\up}{\uparrow}
\newcommand{\Dup}{\mathbb{D}_\calM^\uparrow}
\newcommand{\proof}{\noindent{\bf Proof:}\ }
\newcommand{\qed}{\hfill $\Box$}
\newcommand{\Cinc}{\C_{\calM_0}^\up}
\newcommand{\bm}{\gamma}
\newcommand{\ind}{\mathbb{I}}
\newcommand{\faln}{\bar{\alpha}^N}
\newcommand{\fxin}{\bar{\xi}^N}
\newcommand{\falphan}{\bar{\alpha}^N}
\newcommand{\fbetan}{\bar{\beta}^N}
\newcommand{\frhon}{\bar{\rho}^N}
\newcommand{\fmun}{\bar{\mu}^N}
\newcommand{\fiotan}{\bar{\iota}^N}
\newcommand{\fen}{\bar{e}^N}
\newcommand{\are}{\ell}
\newcommand{\MVSM}{{\mbox{MVSM }}}
\newcommand{\MVSMns}{MVSM}
\newcommand{\MVSP}{\mbox{MVSP }}
\newcommand{\MVSPns}{\mbox{MVSP}}
\newcommand{\initxi}{\xi_{0-}}
\newcommand{\initxiN}{\xi^N_{0-}}
\numberwithin{equation}{section}
\title{A Skorokhod Map on Measure-Valued Paths with Applications to Priority
  Queues}
\author{Rami Atar\thanks{Department of Electrical Engineering,
Technion--Israel Institute of Technology,
Haifa 32000, Israel}
\and
Anup Biswas\thanks{Department of Mathematics, Indian Institute of Science Education and Research, Pune 411008, India}
\and
Haya Kaspi\thanks{Department of Industrial Engineering and Management,
Technion--Israel Institute of Technology,
Haifa 32000, Israel}
\and
Kavita Ramanan\thanks{Division of Applied Mathematics, Brown University, Providence, RI 02118, USA}
}
\date{}
\begin{document}
\maketitle

\begin{abstract}
 The Skorokhod map on the half-line has proved to be a useful tool for
studying processes with non-negativity constraints.  In this work we introduce a
measure-valued analog of this map that transforms each element $\zeta$ of
a certain class of  c\`{a}dl\`{a}g  paths
 that take values in the space of signed measures on $[0,\infty)$
to a c\`{a}dl\`{a}g  path  that  takes values in the space of non-negative
measures on $[0,\infty)$ in such a way that for each $x > 0$,
 the path $t \mapsto \zeta_t[0,x]$ is transformed via a
Skorokhod map on the half-line, and the regulating functions for
different $x > 0$ are coupled.  We
establish regularity properties of this map and show that the
map provides a convenient tool for studying  queueing systems in which tasks are prioritized
according to a continuous parameter.
Three such well known models are
  the {\it earliest-deadline-first},
the {\it shortest-job-first} and the {\it
  shortest-remaining-processing-time} scheduling policies.
For these applications, we show how the map provides a unified
framework within which to form fluid model equations,
  prove uniqueness of solutions to these equations
and  establish convergence of scaled state processes to the fluid model.
In particular, for these models, we obtain  new convergence results in time-inhomogeneous settings,
which appear to fall outside the purview of existing approaches.
\end{abstract}

\skp

\noi{\bf AMS subject classifications:}\, 60K25, 60G57, 68M20

\skp

\noi{\bf Keywords:}\, Skorokhod map, measure-valued Skorokhod
map, measure-valued processes, fluid models, fluid limits, law of large numbers,
priority queueing, Earliest-Deadline-First, Shortest-Remaining-Processing Time,
Shortest-Job-First.

\setcounter{tocdepth}{1}
\skp

\hrule
 \tableofcontents
\skp
\hrule
\skp

\section{Introduction}

An established framework in queueing theory is to identify scaling limits of
system dynamics, whereby one can describe the qualitative behaviour of processes
such as queue length,  workload and other performance measures.  In
this context,  the classical Skorokhod map (SM)
introduced by Skorokhod \cite{Sko61} and
its multi-dimensional analogs,
have served as useful tools for establishing limit theorems.
The classical Skorokhod map, which in this paper we refer to  as the
SM on the half-line, acts on real-valued paths to produce a path that
is constrained to be non-negative.
By representing the queue-length process as the image, under a
(possibly multi-dimensional) SM,
of a simpler so-called ``netput'' process, one can often reduce the problem
of establishing convergence of the sequence of queue-length processes
to the simpler problem of establishing
convergence of the corresponding sequence of netput processes.
In recent years, the study of more complex  networks has led to
 the use of measure-valued processes, which have proved
powerful for analyzing both single-server and many-server systems,
 and specifically, establishing
Law of Large Numbers (LLN) and Central Limit Theorem (CLT) results
\cite{atar-kas-shim, biswas, Doy-Leh-Shre, Gro04, GroKruPuh11, grom-puha-will, kang-ramanan, kaspi-ramanan,
 kaspi-ramanan-spde, kruk-lehoc-ram-shre, Puh15, zhang-dai-zwart}.
In the context of single-server networks, measure-valued processes
have been particularly useful for studying
scaling limits of models in which jobs are prioritized according
to a continuous parameter, such as the deadline of the job  or the job size.
In this case, the measures for which the dynamics are specified correspond to the
(suitably normalized) counting measure that keeps track of  the number  of
jobs with  deadlines and  sizes, respectively, in any given interval.
In this work,  we introduce a map acting on a subset of paths in the
space of signed measures
 that can be viewed as a
measure-valued analog of the classical SM, and which we refer to as
the measure-valued Skorokhod map (\MVSMns).
   We show that   the \MVSM provides a unified framework for the study of the dynamics
  of queueing   systems with continuous parameter priority scheduling policies.
Specifically, we use the map  to formulate fluid models of several
such queueing systems, as  well as to prove LLN results for these systems, both in
time-homogeneous and time-varying settings.
The map  and its regularity properties may be of independent interest and could potentially
also have  applications in other fields.

To describe the \MVSMns,
 let $\calM$ and ${\cal M}^\prime$  denote the spaces
of finite non-negative measures and  signed measures, respectively, on the non-negative real line.
Given $(\al, \mu)$, where
$\al$  is an $\calM$-valued  c\`{a}dl\`{a}g path and $\mu$
is a non-negative non-decreasing c\`{a}dl\`{a}g real-valued function on
$[0,\infty)$,
the \MVSM maps  the  ${\cal M}^\prime$-valued path $t \mapsto
\alpha_t - \mu(t)$ to an $\calM$-valued  c\`{a}dl\`{a}g path in such a way  that
for each $x \geq 0$,   the real-valued path $t \mapsto \al_t[0,x] -
\mu(t)$  is transformed under the classical SM on the half-line
and the constraining terms for different $x$ are coupled
in a specific fashion (see  Definition \ref{defi-1} for a precise
description).
 Our key  observation  is that the \MVSM serves as  a generic model for priority.
We demonstrate this point by applying the \MVSM to study
several queueing models employing a continuous parameter priority
that have been previously treated by distinct tools, and to
obtain new results for  models that seem to fall outside the purview
of existing  methods.

Among the several scheduling policies for which we argue that the \MVSM is applicable,
we treat three in detail:  Earliest-Deadline-First (EDF),
Shortest-Remaining-Processing-Time (SRPT) and
Shortest-Job-First (SJF).  In EDF,  jobs are prioritized according to their deadlines, which are declared upon
arrival.   We consider two versions of the policy, depending on
whether the jobs are subject to ``soft'' or ``hard''
deadlines.  If jobs
continue to be  served even after their
deadlines have elapsed, then we refer to this as the ``soft EDF''
policy, whereas with the ``hard EDF'' policy,  jobs
that miss their deadlines either renege or are ejected from the system.
 The soft or hard EDF policy is said to be  preemptive if an arriving
 job with a more urgent deadline is  allowed to interrupt  a job in service, and
non-preemptive otherwise.
 In the SRPT and SJF policies,
scheduling is prioritized according to the size of a  job
(for a survey and motivation regarding these policies we refer to the
introduction in \cite{down-grom-puha}).
Under SRPT, the arrival of a job whose size  is smaller than the remaining
service time of the one being currently processed will interrupt the service, whereas
service is non-interruptible under SJF.  In other words, SRPT and SJF
are, respectively, preemptive and non-preemptive versions of a common
priority policy.

To set our results in context,
we first discuss  prior work on
 the EDF, SRPT and SJF models.  The EDF model
was first considered in \cite{Doy-Leh-Shre} as far as scaling limits
are concerned, and
further results appeared in \cite{kruk-lehoc-ram-shre}.
In both papers diffusion approximations in heavy traffic were
established;  \cite{Doy-Leh-Shre} treats the preemptive  soft EDF
model with general renewal arrivals and independent and identically
distributed (i.i.d.) service times (the so-called $GI/GI/1$ setting),  whereas
\cite{kruk-lehoc-ram-shre} analyzes the preemptive hard EDF (or
$GI/GI/1+GI$) version of the model,  with both works considering jobs
that have i.i.d. deadlines drawn from a general distribution.
The analysis in
\cite{kruk-lehoc-ram-shre}  is carried out by introducing a map
(see Section 4.1.1 therein) that transforms
the space of c\`{a}dl\`{a}g $\calM$-valued paths to itself in such a
way that it acts on  the measure-valued state process of the
preemptive soft EDF
model to obtain an approximation of the corresponding state process in
the preemptive hard EDF model,  which becomes exact
 in the heavy-traffic limit.
  As elaborated in \cite{kruk-lehoc-ram-shre,KruLehRamShr08},
this  map can be viewed as a measure-valued generalization
of the map on real-valued paths that takes the image of the  SM on the
half-line
to the image of the so-called double-barrier SM on a bounded interval $[0,a]$ \cite{KruLehRamShr07},
and thus, is  completely different  from our  \MVSMns.
In terms of  LLN limits,   the non-preemptive hard EDF model
was studied in \cite{Dec-Moyal} and \cite{atar-bis-kaspi}.
The former considered general deadline distributions but  Poisson arrivals and
exponential service times (the $M/M/1+GI$ setting) by
analyzing the Markov evolution of a measure-valued state process,
whereas the latter considered the case of general arrivals and service
times with general deadline  distributions (the $G/GI/1+GI$ setting) that
satisfy a certain monotonicity condition, and made key use of
 a certain Skorokhod problem with a time-varying barrier.
All the existing LLN and CLT results for the (soft or hard) EDF policy
mentioned above
 \cite{atar-bis-kaspi, Dec-Moyal, Doy-Leh-Shre, kruk-lehoc-ram-shre}
heavily rely on the assumption that
the arrival and service rates are constant, and more specifically,
crucially use   the so-called \textit{frontier} process, a concept that
was introduced in \cite{Doy-Leh-Shre}.
The frontier at time $t$ is defined to be the maximum of the
\textit{lead times} of  jobs present in the system at time $t$ that have ever been
in service (here, the lead time of a job is defined to be its deadline
minus the current time).  The results crucially rely on the fact
that under suitable conditions, the asymptotic behavior is
such that the frontier process separates the population of jobs into those that
have been sent to service and those that have not.
However, such a frontier process may not exist in general.
In particular, as illustrated in Figure \ref{fig1}
and supported by computer simulations,
there is typically no such separation of populations when
the arrival or service rate is time varying.

As for results on scaling limits of the SRPT and SJF policies,
CLTs for queues in heavy traffic working under the
SRPT policy have been established in \cite{GroKruPuh11,Puh15}, while
LLN results for the SRPT and SJF policies have been established in
\cite{down-grom-puha} and \cite{grom-keu}.
As shown in \cite{grom-keu}, the limits under both policies agree.
As in the prior work on EDF, the works \cite{grom-keu} and
\cite{down-grom-puha} also make use
of an analogously defined frontier process,
and assume constant arrival and service rates.

In this paper we apply the common framework of the MVSM to
establish LLN results for the EDF, SRPT and SJF policies, in particular allowing for
time-inhomogeneous arrival and service rates.
Specifically, we establish the LLN limit of a queue
 operating under the non-preemptive hard EDF policy, in which jobs with i.i.d.\ deadlines  from
a general distribution arrive to a single-server queue and the
cumulative arrival and service processes are modelled by
general, possibly time-inhomogeneous non-decreasing stochastic
processes (the $G_t/G_t/1+GI$ setting).
The result we obtain  is far more general than \cite{atar-bis-kaspi}
and \cite{Dec-Moyal} as it allows
 variable arrival and service rates and also
relaxes the assumption made in \cite{atar-bis-kaspi} regarding strict monotonicity
of the deadline distribution function.
Moreover, the treatment of the fluid model equations
establishes a result that may be of independent interest,  which shows
that EDF scheduling is optimal at the LLN scale in terms of the reneging count.
Earlier results on this aspect include
\cite{panwar-towsley, panwar-towsley-wolf}, where the optimality  of EDF, in terms
of minimizing the total number of reneged jobs, is shown for the $G/M/1+GI$ queue.
In \cite{kruk-lehoc-ram-shre} it is shown that the  total amount
of \textit{reneged work} is optimized in a
$G/G/1+G$ queue when the EDF scheduling policy is applied.
Optimality properties of EDF are also studied in \cite{Moyal}.
Thus, our optimality result extends these results to quite a broad setting.
For the SJF and SRPT policies, we  generalize
 the results in \cite{down-grom-puha,grom-keu} to allow time-varying
 arrival and service rates,
where again, the notion of a frontier becomes ineffective.
Also, our proof technique, which involves the application of
the \MVSM in conjunction with the continuous mapping theorem,  substantially simplifies the analysis.

Although we consider the performance of priority policies at a single queue in this paper, we believe that a
suitable extension of the MVSM  approach could also be useful for the
study of networks.
Past results regarding the soft EDF policy in a network context are as follows.
Queueing networks with random routing under the soft EDF policy without preemption were
studied in \cite{bra00} (referred to there as earliest-due-date-first-served), where it was shown that subcritical networks
are stable by analyzing the associated fluid model.
This result was extended in  \cite{kru08} to the case of preemptive subcritical
EDF networks when customer routes are fixed by studying the fluid model and showing that it satisfies the
FIFO (first-in-first-out) fluid model equations.  This  work
also established  a stability theorem
for a broader class of (not necessary subcritical)
networks with reneging,  but without recourse to fluid model equations.
The main idea in \cite{kru08} is to show that the initial lead time distribution vanishes in the limit,
and thus EDF reduces to FIFO.
With a view to extending the general theory of heavy-traffic limits for
multiclass queueing networks to a class of non-head-of-the-line
scheduling policies, the paper
\cite{kru10} also studies fluid limits of EDF networks and
characterizes its invariant states.
The MVSM approach could
potentially be useful for obtaining results for SRPT and SJF
networks, where there are not many existing results.

It is worth pointing out that a completely different extension of the
classical SM that acts on measure-valued paths (or
more general real-valued functions defined on a poset) was considered in
\cite{AnaKon05}.  However,
while interesting on its own, when applied to our setting this
extension provides a  decomposition that is not useful for the
applications considered here (see Remark \ref{rem-anakon}).

To summarize our main contributions in this paper, we have
\begin{itemize}
\item  Introduced and established regularity properties  of a
  Skorokhod-type map, the \MVSMns, that acts on a  space of
  measure-valued paths;
\item
Shown that this map serves as a natural tool
  for analyzing  priority queueing models with continuous parameter,
  and used the map to formulate  fluid models for (both hard and soft) EDF, SJF and SRPT;
\item Developed a unified method for establishing LLN limits for the aforementioned
  policies, including  in  time-inhomogeneous situations in which the notion of a
  frontier, which was used in previous analyses, is  ineffective.
\end{itemize}
In addition to the time-inhomogeneous case being of intrinsic
interest since it is often the generic situation in applications,
another motivation for our analysis is that the \MVSM is likely to
also be pertinent for the study of
(even time-homogeneous) many-server systems with general service and
deadline distributions operating under the EDF, SRPT or SJF policies.
Moreover, we believe that this approach, and in particular the
\MVSMns, will also  be useful  for the analysis of  other queueing models in which there is prioritization
with respect to a continuous parameter  (such as,  e.g.,
\cite{StaTayZie14,ShaStaTayZie14}).
Furthermore,
the \MVSMns, or its
close relative, may potentially also be useful for the study of
interacting particle systems arising in other fields.  Such
applications will be explored in future work.

The organization of the paper is as follows.  First, in Section
\ref{subs-not} we collect some common notation used in the paper.
 In Section \ref{sec-idsp} we introduce  the  measure-valued Skorokhod
 problem  (\MVSPns), which defines the \MVSMns,  and establish
 properties of the map.   In Section \ref{sec-examples} we introduce some illustrative examples
that serve to motivate the form of the \MVSPns.
In Section \ref{sec3} and  Section \ref{s-conv} we describe fluid models and establish LLN
results, respectively,  for the EDF, SJF and SRPT policies:  Sections \ref{sec34} and
\ref{D-edf} are devoted to the EDF model, while  Sections \ref{sec33} and
\ref{D-sjf} focus on the SJF and SRPT policies.

\subsection{Notation}
\label{subs-not}

For $x,y\in\R$, the maximum [minimum] is denoted by $x\vee y$ [resp., $x\wedge y$].
For $A\subset \R_+:=[0,\infty)$,
define $A^\eps=\{x\geq 0: \inf_{a\in A}|x-a|<\eps\}$ and let
$\inf A$ (respectively, $\min A$) denote the infimum (respectively,
minimum, if it exists) of the set of
points in $A$.
Denote by $\ind_A$ the indicator function of a set $A$, which takes
the value $1$ on  the set $A$, and zero otherwise.
For $f:\R\to\R^k$ denote $\|f\|_T=\sup_{t \in [0,T]}\|f(t)\|$, and
for $\varepsilon > 0$, we define the oscillation of $f$ as
\[  Osc_{\varepsilon} (f) \doteq \sup \{ |f(s) - f(t)|: |s-t| \leq
\varepsilon, s, t \in \R \}. \]
For a topological space $\calS$,
denote by $\C_b(\calS)$ the set of real-valued bounded, continuous maps on $\calS$,
by $\C_{b, +}(\calS)$ the collection of members of $\C_b(\calS)$ that are non-negative,
and by $\mathcal{B}(\calS)$ the Borel $\sigma$-field on $\calS$.
For a Polish space $\calS$, denote by
$\C_\calS$ the space of continuous functions $\R_+ \to \calS$
and $\D_\calS$,  the space of functions $\R_+\to\calS$ that are right continuous at every $t \in [0,\infty)$
and have finite left limits at every  $t \in (0,\infty)$.
The space $\D_{\calS}$ is endowed with the Skorohod $J_1$
topology and  $\C_{\calS}$ is endowed with the topology of uniform
convergence on compact subsets.
Also, let  $\D_\R^\uparrow$ (respectively, $\C_\R^\uparrow$) denote the subset of functions in $\D_\R$ that
are non-negative and non-decreasing.

The space of non-negative finite Borel measures on $\R_+$ is denoted
by $\calM$, and the subspace of measures in $\calM$ that have no atoms
are denoted by $\calM_0$.   Given $\nu \in \calM$, we let $\supp
[\nu]$ denote the support of $\nu$, which is defined to be the closure
of the set of points $x \in \R_+$ for which every open
neighborhood $N_x$ of $x$  has positive measure, that is, $\nu (N_x) >
0$.  Given two measures $\nu, \nu^\prime \in \calM$, we will write
$\nu \ll \nu^\prime$  to denote that
$\nu$ is absolutely continuous with respect to $\nu^\prime$.
The symbol $\del_x$ denotes the point mass at $x\in\R_+$.
For $\nu\in\calM$ and a Borel measurable function $g$ on $\R_+$, we
use the notation
$\langle g, \nu\rangle=\int g d\nu$.
Endow $\calM$ with the Levy metric given by
\begin{equation}
\label{met-Proh}
d_\calL(\nu_1,\nu_2)=\inf\{\eps>0 :
\nu_1[0,(x-\eps)^+]-\eps\le \nu_2[0,x]\le\nu_1[0,x+\eps]+\eps,
\mbox{for all}\ x\in\R_+\}.
\end{equation}
It is well known that $(\calM, d_\calL)$ is a Polish space \cite[Chapter 2]{huber}.
Also, the topology induced by $d_\calL$ is equivalent
to the weak topology on $\calM$, characterized by $\nu_n\rightarrow
\nu$ in $\calM$
if and only if
\begin{center}
$\langle f, \nu_n\rangle\to\langle f, \nu\rangle$ for all $f\in \C_b(\R_+)$.
\end{center}
For $\nu\in\calM$, we write $\nu[a,b]$ for $\nu([a,b])$, and similarly
$\nu[a,b)$ for $\nu([a,b))$, etc.
It is well known that
\begin{equation}
\label{comp-dmdk}
  d_{\calL} (\nu_1, \nu_2) \leq  \sup_{x \in [0,\infty)}
\left| \nu_1 [0,x] - \nu_2[0,x] \right| \leq
  d_{\calL} (\nu_1, \nu_2)  + Osc_{2d_{\calL} (\nu_1, \nu_2)} (\nu_2[0,\cdot])
\end{equation}
(for the first inequality see \cite[Eq.\ (2.25)]{huber}, the second follows
by definition).
On the other hand, given $\xi \in \D_{\calM}$ and $0 \leq a \leq b$, we use
$\xi_{\cdot}[a,b]$ to denote the function $t \mapsto \xi_t [a,b]$.
Also, given $t \geq 0$, if $\zeta \in \D_{\calM}$  we will use $\zeta_t$ to denote
the evaluation of the path $\zeta$ at time $t$,  whereas if $f \in \D_{\R}$, then we
will use $f(t)$ to denote the value of $f$ at time $t$.

For  $\zeta\in\D_\R^\up$ we denote by $\bm^\zeta$
the Lebesgue-Stieltjes measure that
$\zeta$ induces on
$(\R_+, {\mathcal B}(\R_+))$, namely,
\begin{equation}
\label{meas-LS}
\bm^\zeta(B)=\zeta(0)\del_0(B)+\int_{(0,\iy)}\ind_B(t)d\zeta_t,  \quad
B \in {\mathcal B}(\R_+).
\end{equation}
Throughout, we write ``$d\zeta$-a.e.''  to  mean ``$d\bm^\zeta$-a.e.''

\begin{figure}\label{fig1}
\begin{center}
\includegraphics[height=4.9em]{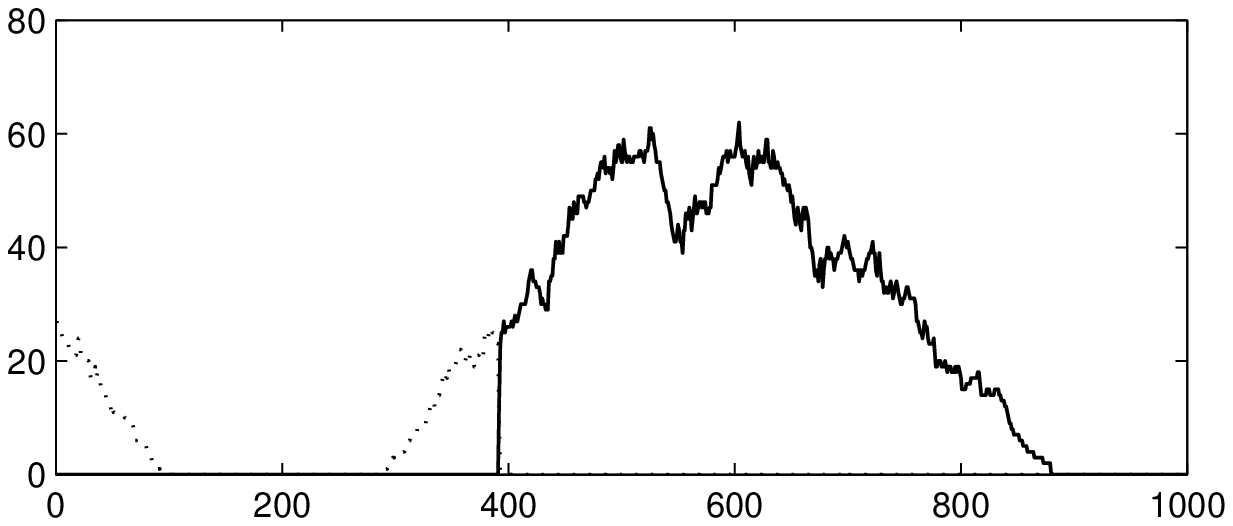}
\includegraphics[height=4.9em]{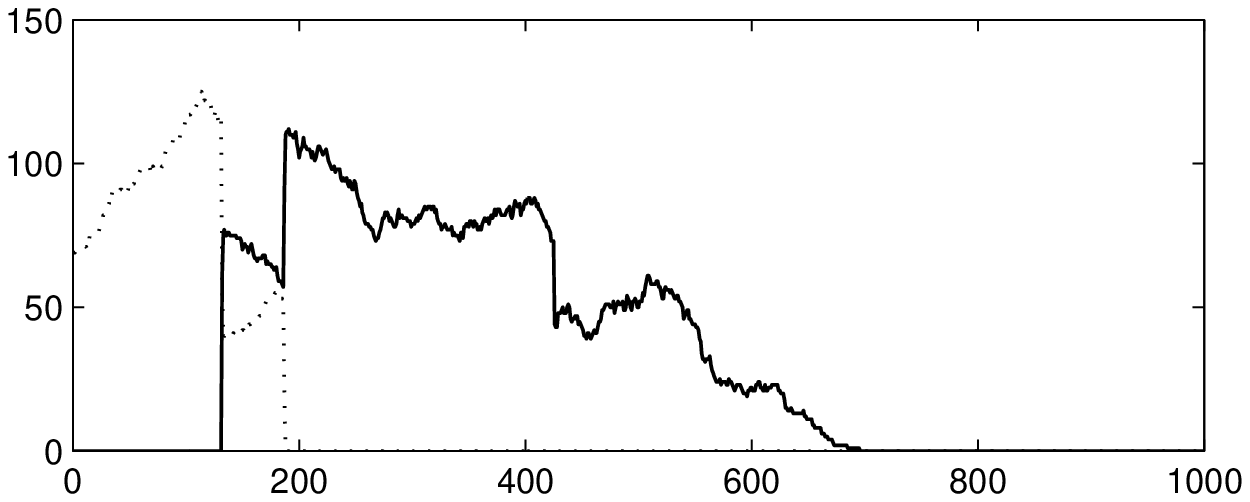}
\includegraphics[height=4.9em]{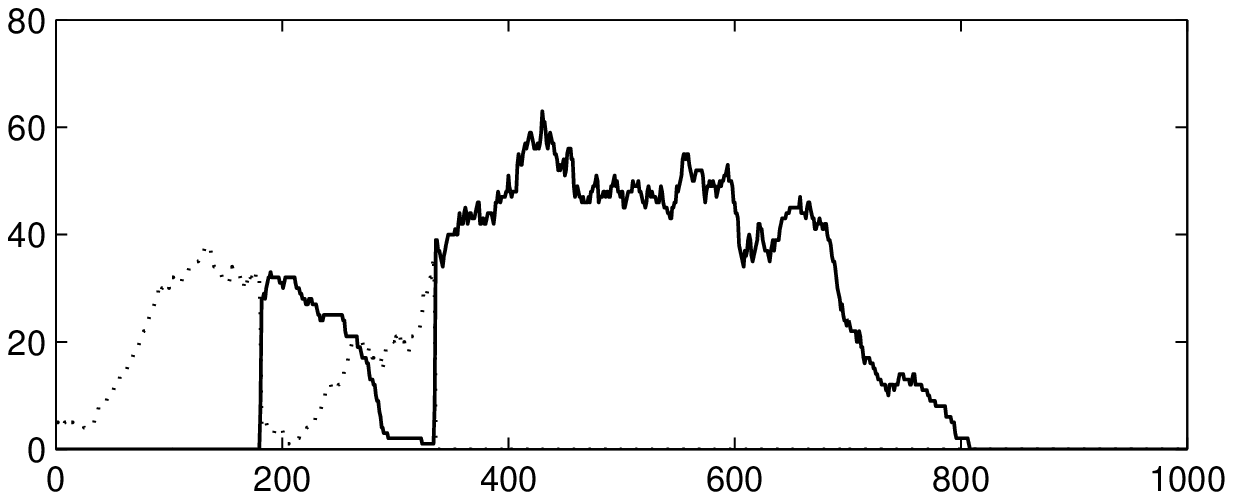}
\\
{\sl (a) \hspace{11.5em} (b) \hspace{11.5em} (c)}
\caption{\sl\small
Simulation results for the hard EDF model in which the arrival stream
is stochastic and highly inhomogeneous.
The graphs depict histograms of number of jobs
as a function of the lead time (i.e., the time until a job's deadline),
at three different epochs.
Jobs that have arrived into the system and have [have not] been sent to service
are shown in dotted [resp., solid] line.
At the epoch captured in graph (a), these two populations of jobs
are separated by a certain priority level, often referred to as the frontier.
Graphs (b) and (c) correspond to
a generic situation, in which the notion of a frontier is no longer effective.
The service rate is fixed at 60 jobs per unit time,
while the arrival pattern changes periodically (with 400 time units period) between
a uniform distribution over $[50,299]$ at rate 100 and $[600,849]$
at rate 50 jobs per unit time.
}
\end{center}
\end{figure}

\section{A Skorokhod problem on the space of measure-valued paths}\label{sec-idsp}

In Section \ref{subs-SM} we recall the definition of the
Skorokhod problem (SP) on the half-line, and list some properties
that will be useful in our analysis. In Section \ref{subs-IDSP} we
introduce the \MVSMns.

\subsection{The Skorokhod Map on the Half-Line}
\label{subs-SM}

The SP on the half-line was first introduced by
Skorokhod in \cite{Sko61}.  Roughly speaking, it seeks to transform a
real-valued function to one that is minimally constrained to be
non-negative.

\begin{definition}[Skorokhod problem (SP) on the half-line]
\label{def-SP}
 {\it Given data $\psi\in \D_\R$,
find a pair $(\ph,\eta)\in \D_\R\times \D_\R^\uparrow$ such that
$\ph=\psi+\eta$, $\ph(t)\ge0$ for all $t \geq 0$,
and $\ph(t)=0$ for $d\eta$-a.e.\ $t \in [0,\infty)$. }
\end{definition}
It is well known that for every $\psi \in \D_\R$,   there is a
unique  solution $(\ph, \eta) = \Gam [\psi]$ that solves the
SP on the half-line, and we refer to $\Gam$ as the {\it
Skorokhod map (SM) on the half-line}.
Specifically, if we denote the  two component maps of $\Gam$ by $\Gam_1$ and
$\Gam_2$, then for $\psi \in \D_\R$,
\begin{equation}\label{20}
\ph(t) =: \Gam_1[\psi](t) = \psi (t)-\inf_{s \in [0,t]}(\psi(s)\w 0),\qquad
\eta (t) =: \Gam_2[\psi](t) = \ph(t)-\psi (t),   \qquad t \geq 0.
\end{equation}

We now state  two elementary  properties of the SM $\Gam$.

\begin{lemma}
  \label{lem-1d} For $i=1,2$,  let $\psi_i\in\D_\R$ and $(\ph_i,
  \eta_i)=\Gam[\psi_i]$.
 Then the following properties hold.
\begin{enumerate}
\item  (Monotonicity) If $\psi_2-\psi_1\in\D_\R^\up$ then
   $\eta_1-\eta_2\in\D_\R^\up$ and $\ph_2-\ph_1\ge0$.
\item (Lipschitz continuity) $\|\ph_2-\ph_1\|_T\le 2
  \|\psi_2-\psi_1\|_T$ for any $T\in(0,\iy)$.
\end{enumerate}
\end{lemma}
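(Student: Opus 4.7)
The plan is to prove both parts directly from the explicit formula \eqref{20}, so the work reduces to elementary pointwise inequalities plus a sup/inf optimization argument.

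For part (2) (Lipschitz continuity), I would argue as follows. The map $a \mapsto a \wedge 0$ is $1$-Lipschitz on $\R$, and the functional $f \mapsto \inf_{s \in [0,t]} f(s)$ is $1$-Lipschitz with respect to the sup norm on $[0,t]$. Hence, writing $\eta_i(t) = -\inf_{s\in[0,t]}(\psi_i(s)\wedge 0)$ from \eqref{20}, one gets $|\eta_2(t)-\eta_1(t)| \le \|\psi_2-\psi_1\|_T$ for every $t \in [0,T]$. Combining this with $\varphi_i = \psi_i + \eta_i$ and the triangle inequality yields the factor of $2$. No subtle step is involved.

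For part (1) (monotonicity), set $\theta := \psi_2 - \psi_1 \in \D_\R^\up$ (so $\theta \geq 0$ and non-decreasing) and proceed by optimizer-picking. First I show $\ph_2 \ge \ph_1$: fix $t$; if $\eta_1(t)=0$ then $\eta_2(t)=0$ too and the assertion reduces to $\psi_2(t)\ge\psi_1(t)$. Otherwise, for any $\varepsilon>0$ choose $u^\ast\in[0,t]$ with $-\psi_1(u^\ast)\ge \eta_1(t)-\varepsilon$; then $\eta_2(t) \geq (-\psi_2(u^\ast))^+ \geq -\psi_1(u^\ast) - \theta(u^\ast) \ge \eta_1(t) - \theta(u^\ast) - \varepsilon$, so letting $\varepsilon \downarrow 0$ gives $\eta_1(t)-\eta_2(t) \le \theta(u^\ast) \le \theta(t)$, and hence
\[
\ph_2(t)-\ph_1(t) = \theta(t) - (\eta_1(t)-\eta_2(t)) \geq 0.
\]

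Second, I show $\eta_1-\eta_2\in\D_\R^\up$. Fix $0\le t_1\le t_2$; if $\eta_2(t_2)=\eta_2(t_1)$ the inequality $\eta_1(t_2)-\eta_1(t_1)\ge\eta_2(t_2)-\eta_2(t_1)$ is immediate from $\eta_1$ being non-decreasing. Otherwise, for arbitrary $\varepsilon > 0$ pick $s^\ast \in [t_1,t_2]$ with $-\psi_2(s^\ast) \ge \eta_2(t_2)-\varepsilon$, and pick $u^\ast \in [0,t_1]$ with $-\psi_1(u^\ast) \ge \eta_1(t_1)-\varepsilon$ (or trivially if $\eta_1(t_1)=0$). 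Using $\psi_1 = \psi_2 - \theta$ at $s^\ast$ and $\psi_2 = \psi_1+\theta$ at $u^\ast$ yields
\[
\eta_1(t_2) \geq \eta_2(t_2) + \theta(s^\ast) - \varepsilon, \qquad
\eta_2(t_1) \geq \eta_1(t_1) - \theta(u^\ast) - \varepsilon,
\]
so that $\eta_1(t_2)-\eta_1(t_1) \ge \eta_2(t_2)-\eta_2(t_1) + (\theta(s^\ast)-\theta(u^\ast)) - 2\varepsilon$. Since $u^\ast \le t_1 \le s^\ast$ and $\theta$ is non-decreasing, $\theta(s^\ast)-\theta(u^\ast)\ge 0$, and sending $\varepsilon\downarrow 0$ finishes the argument. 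The statement $\ph_2-\ph_1 \ge 0$ then follows either from this step combined with $\theta \ge \eta_1-\eta_2$ shown above, or from the same optimizer argument.

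The only mildly delicate point is that for càdlàg $\psi_i$ the infimum in \eqref{20} may not be attained, which is why I work with $\varepsilon$-approximate optimizers throughout; there is no deeper obstacle, and the whole argument is elementary once one commits to reading $\eta_i$ through the explicit formula \eqref{20}.
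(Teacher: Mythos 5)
Your proof is correct and follows the same route as the paper, whose proof of this lemma is a single sentence asserting that the statements follow immediately from the explicit formula \eqref{20}; your argument simply spells out that reduction via $\varepsilon$-approximate optimizers. One small omission: $\D_\R^\up$ consists of functions that are \emph{non-negative} as well as non-decreasing, and you verify only the latter for $\eta_1-\eta_2$; the missing half is immediate since $\psi_2\ge\psi_1$ gives $\psi_2\wedge 0\ge\psi_1\wedge 0$ pointwise, hence $\eta_2(t)=-\inf_{s\le t}(\psi_2(s)\wedge 0)\le -\inf_{s\le t}(\psi_1(s)\wedge 0)=\eta_1(t)$.
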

\proof The statements follow immediately from the explicit formula for
$(\ph, \eta)$ in \eqref{20}.
\qed

\skp

We close this section by stating two more basic properties  of
$\Gam_1$ that will be used frequently in the sequel.  In what follows,
given a real-valued function $f$ on $[0,\infty)$ and $T > 0$, we
define the shifted version $f^T$ as follows:
\begin{equation}
\label{shift-f}
f^T(t) = f(T+t) - f(T), t \geq 0.
\end{equation}

\begin{lemma}\label{lem1}
Given  $\psi\in\D_\R$, let  $\ph=\Gam_1(\psi)$. Then the following
two properties hold:
\begin{enumerate}
\item  Given $T \in [0, \infty)$,  $\phi (T+t) = \Gam_1 (\phi(T) +
  \psi^T)(t)$, $t \geq 0$.
\item
For any  $0 \leq  S \leq T < \infty$,  $\phi (T) = 0$ if and only if
$\psi^S (T-S) = \inf_{s \in [0,T-S]}\psi^S(s) \leq  - \phi(S)$.   Moreover, if $\phi(S) = 0$, then
$\phi(T) = 0$ for $T \in [S,S+\delta]$ if and only if $\psi$ is
non-increasing on $[S,S+\delta]$.
\end{enumerate}
\end{lemma}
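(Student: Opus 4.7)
The plan is to deduce part 1 from the uniqueness of solutions to the SP on the half-line, and then obtain part 2 from part 1 together with the explicit formula \eqref{20}.

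For part 1, I would set $\eta = \Gam_2[\psi]$ and introduce the shifted pair
\[
\tilde\ph(t) := \ph(T+t),\qquad \tilde\eta(t) := \eta(T+t) - \eta(T),\quad t\ge 0.
\]
The identity $\ph = \psi + \eta$ immediately rearranges to $\tilde\ph(t) = \ph(T) + \psi^T(t) + \tilde\eta(t)$. The pair $(\tilde\ph,\tilde\eta)$ lies in $\D_\R\times\D_\R^\up$, satisfies $\tilde\ph\ge 0$ and $\tilde\eta(0)=0$, and the complementarity $\ph(t)=0$ $d\eta$-a.e., restricted to $[T,\infty)$ and shifted to $[0,\infty)$, yields $\tilde\ph(t) = 0$ $d\tilde\eta$-a.e. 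Hence $(\tilde\ph,\tilde\eta)$ solves the SP on the half-line with driving path $\ph(T) + \psi^T$, and by uniqueness (stated just after Definition \ref{def-SP}) we conclude $\tilde\ph = \Gam_1(\ph(T) + \psi^T)$.

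For part 2, I would apply part 1 with $T$ replaced by $S$ and argument $T-S$, and then invoke \eqref{20} to write
\[
\ph(T) = \ph(S) + \psi^S(T-S) - \inf_{s \in [0,T-S]}\bigl((\ph(S) + \psi^S(s)) \wedge 0\bigr).
\]
Thus $\ph(T)=0$ is equivalent to the displayed infimum being equal to $\ph(S)+\psi^S(T-S)$. Since that infimum is always $\le 0$, this forces $\psi^S(T-S)\le -\ph(S)$ as a first necessary condition. For the remaining equivalence I would do a case split on $s \in [0,T-S]$: when $\ph(S)+\psi^S(s)\le 0$ the $\wedge 0$ is inactive, so the infimum-attainment condition forces $\psi^S(s)\ge \psi^S(T-S)$; when $\ph(S)+\psi^S(s)>0$ one has $\psi^S(s)>-\ph(S)\ge\psi^S(T-S)$ automatically. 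Combining these gives $\psi^S(T-S)=\inf_{s\in[0,T-S]}\psi^S(s)$. The converse is direct: under the two stated conditions, $\ph(S)+\psi^S(s)\ge \ph(S)+\psi^S(T-S)\le 0$ for all $s$, so the $\wedge 0$ preserves this inequality, the infimum is attained at $s=T-S$, and the displayed formula gives $\ph(T)=0$.

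The ``moreover'' clause then follows by specializing the equivalence to $\ph(S)=0$ and applying it uniformly over $T\in[S,S+\delta]$: the two conditions reduce to $\psi(T)=\inf_{u\in[S,T]}\psi(u)$ for every such $T$, which is a standard characterization of $\psi$ being non-increasing on $[S,S+\delta]$ (one implication is immediate from the definition of the infimum, the other from comparing the infima at two times). The only delicate point in the whole argument is the $\wedge 0$ bookkeeping in the necessary direction of part 2, for which the case split above suffices; everything else is the uniqueness/shift argument of part 1 and direct manipulation of \eqref{20}.
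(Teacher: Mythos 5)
Your proposal is correct and takes the same route as the paper: part 1 via uniqueness of the SP solution applied to the shifted pair, then part 2 as a direct consequence of the explicit formula \eqref{20} applied to $\phi(S)+\psi^S$. The paper merely asserts these steps and cites Lemma 2.3 of \cite{Ram06}; your write-up, including the $\wedge 0$ case analysis and the verification that the constraining term remains non-decreasing and satisfies the complementarity condition after shifting, supplies exactly the details the paper leaves implicit.
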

\proof The first  property is easy to verify directly from the
properties of the SP (see also Lemma 2.3  of
\cite{Ram06}).
Moreover, for  $0 \leq S \leq T < \infty$ property 1. and  \eqref{20}
imply that
\[ \phi (T) = \phi (S)  + \psi^S(T-S)  - \inf_{s \in [0,T-S]} \left( (\phi(S) + \psi^S(s))
\wedge 0 \right).
\]
Property 2 is a simple consequence of this relation.
\qed

\subsection{The \MVSPns: definition and properties}
\label{subs-IDSP}

In this section, we define a measure-valued Skorohod problem (\MVSPns),
show that it possesses a unique solution, and refer to the solution
map as the measure-valued Skorokhod map (\MVSMns).
We then establish certain regularity properties of this map.
To this end, let
\begin{equation}\label{500}
\Dup :=\{\zeta\in \D_\calM:\, t\mapsto\lan f, \zeta_t\ran \ \text{is
non-decreasing $\forall f\in\C_{b,+}(\R_+)$}\},
\end{equation}
where recall that $\C_{b,+} (\R_+)$ is the space of non-negative
bounded continuous maps on $\R_+$.
The following lemma gathers some elementary properties of the space
$\Dup$.   Its proof is relegated to  Appendix \ref{subs-aplem0}.

\begin{lemma}
  \label{lem-0}  The following properties hold.
\begin{enumerate}
\item
  $\Dup$ is a closed subset of $\D_\calM$.
\item
 If $\zeta\in\D_{\calM}$, then $\zeta \in \Dup$ if and only if for
 every  $0\le x<y$,
  $\zeta[0,x]\in\D_\R^\up$ and  $\zeta(x,y]\in\D_\R^\up$.
\item
 If $t \mapsto \zeta_t$ is right-continuous and non-decreasing in
 $\calM$,  then for every $t, x \in \R_+$ and sequences
 $\{x_n\}, \{y_n\} \subset \R_+$ such that $x_n
\downarrow x$,  and $y_n \uparrow x$,
\begin{equation}\label{505}
\lim_{n \rightarrow \infty} \sup_{s\in[0, t]}\zeta_s(x, x_n] = 0 \quad
\mbox{ and } \quad \lim_{n \rightarrow \infty} \sup_{s\in[0, t]}\zeta_s(y_n, x) = 0.
\end{equation}
\item
Given any measurable space $(S, {\mathcal S})$, a map ${\mathcal T}$
from  $(S, {\mathcal S})$ to $\D_{\calM}$, equipped with the Borel
$\sigma$-algebra, is measurable if and only if for every $t,x \geq 0$,
the map ${\mathcal T}_{t,x}: (S, {\mathcal S})$ to $(\R, {\mathcal B}(\R))$, where ${\mathcal T}_{t,x} (s) = ({\mathcal
  T}(s))_t[0,x]$, is measurable.
\end{enumerate}
\end{lemma}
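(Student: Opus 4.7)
The plan is to treat the four parts in order, each reducing to a standard property of the weak topology on $\calM$, the $J_1$ topology on $\D_\calM$, or closedness of the cone of non-decreasing real-valued c\`adl\`ag paths. For Part 1, I would use that for each fixed $f \in \C_{b,+}(\R_+)$ the functional $\nu \mapsto \lan f, \nu\ran$ is continuous on $(\calM, d_\calL)$, and hence the composition $\zeta \mapsto \lan f, \zeta_\cdot\ran$ is $J_1$-continuous from $\D_\calM$ to $\D_\R$. If $\zeta^n \to \zeta$ in $\D_\calM$ with $\zeta^n \in \Dup$, then $\lan f, \zeta^n_\cdot\ran \to \lan f, \zeta_\cdot\ran$ in $\D_\R$; since the subset of $\D_\R$ consisting of non-negative non-decreasing paths is $J_1$-closed (monotonicity is preserved by increasing time changes, and by uniform limits, while non-negativity is automatic because $\zeta_t \in \calM$), the limit is non-decreasing, giving $\zeta \in \Dup$.

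For Part 2, the forward direction rests on approximation of indicator functions from within $\C_{b,+}(\R_+)$. Take $f_\eps(u) = (1 - \eps^{-1}(u-x)^+)^+$, which decreases pointwise to $\ind_{[0,x]}$; dominated convergence with dominator $1$ (using $\zeta_t(\R_+) < \iy$) gives $\lan f_\eps, \zeta_t\ran \to \zeta_t[0,x]$ for each $t$, and the non-decreasingness in $t$ of each $\lan f_\eps, \zeta_\cdot\ran$ passes to the pointwise limit. A trapezoidal approximation of $\ind_{(x,y]}$ from below by functions in $\C_{b,+}(\R_+)$ handles the half-open interval analogously. For the reverse direction, given $f \in \C_{b,+}(\R_+)$ I would introduce simple step approximants on the partition $x_k^n = k/n$ of $[0,n]$, namely $f_n = f(0)\ind_{\{0\}} + \sum_{k=1}^{n^2} c_{k,n}\ind_{(x_{k-1}^n, x_k^n]}$ with $c_{k,n} = \inf_{[x_{k-1}^n, x_k^n]}f \ge 0$. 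Then $\lan f_n, \zeta_t\ran$ is a non-negative combination of $\zeta_t\{0\}$ and $\zeta_t(x_{k-1}^n, x_k^n]$, each non-decreasing in $t$ by hypothesis. Dominated convergence with dominator $\|f\|_\iy$ yields $\lan f_n, \zeta_t\ran \to \lan f, \zeta_t\ran$ pointwise in $t$, and the monotonicity property passes to this pointwise limit.

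For Part 3, the hypothesis combined with Part 2 yields $\zeta_s(A) \le \zeta_t(A)$ for every Borel $A$ and $s \le t$, so $\sup_{s \in [0,t]}\zeta_s(x,x_n] \le \zeta_t(x,x_n]$. Since $\zeta_t$ is a finite measure and both nested intersections $\bigcap_n(x,x_n]$ and $\bigcap_n(y_n,x)$ are empty, continuity of $\zeta_t$ from above yields the two vanishing statements. For Part 4, I would invoke the standard fact that the Borel $\sigma$-algebra on $\D_\calM$ (with the $J_1$ topology, over the Polish target $\calM$) coincides with $\sigma(\pi_t : t \in \R_+)$, where $\pi_t$ is evaluation at $t$, and that the Borel $\sigma$-algebra on $(\calM, d_\calL)$ is generated by $\{\nu \mapsto \nu[0,x] : x \in \R_+\}$. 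Composing the two generating families shows that $\{\zeta \mapsto \zeta_t[0,x] : t,x \in \R_+\}$ generates the Borel $\sigma$-algebra on $\D_\calM$, from which the stated equivalence of measurability follows. The delicate point is Part 4, as it requires the classical but nontrivial identification of the Borel $\sigma$-algebra on a Skorokhod space over a Polish target with the $\sigma$-algebra generated by coordinate projections; the remaining parts reduce directly to monotone/dominated convergence and closedness of the cone of non-decreasing paths in $J_1$.
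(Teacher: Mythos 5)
Your overall plan is sound and Parts~1, 3, and~4 are essentially correct, with Part~3 in fact giving a cleaner argument than the paper's. However, Part~2's forward direction contains a real gap.

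\textbf{Part 2, forward direction.} You show that $t \mapsto \zeta_t[0,x]$ and $t \mapsto \zeta_t(x,y]$ are non-negative and non-decreasing, but the assertion is that they lie in $\D_\R^\up$, which additionally requires membership in $\D_\R$, i.e., right-continuity (left limits are then automatic for bounded non-decreasing functions). Right-continuity of $t\mapsto\zeta_t[0,x]$ does \emph{not} follow from right-continuity of $t\mapsto\zeta_t$ in $\calM$, since evaluation $\nu \mapsto \nu[0,x]$ is not weakly continuous unless $\zeta_t(\{x\})=0$. One genuinely needs an extra ingredient: by Portmanteau (for the closed set $[0,x]$) one has $\limsup_{t_n \downarrow t}\zeta_{t_n}[0,x]\le\zeta_t[0,x]$, while the monotonicity you already established gives the reverse inequality $\zeta_{t_n}[0,x]\ge\zeta_t[0,x]$; the two together yield right-continuity. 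Your proposal omits this step entirely, and without it the conclusion $\zeta[0,x]\in\D_\R^\up$ is not justified. (The case of $\zeta(x,y]$ then follows by taking differences.) This is precisely the subtlety the paper addresses.

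\textbf{Part 3.} Your argument is a genuine simplification of the paper's. Reading ``non-decreasing in $\calM$'' as monotonicity in the natural partial order on measures (as the paper's later usage confirms), one has $\sup_{s\in[0,t]}\zeta_s(x,x_n]\le\zeta_t(x,x_n]$ directly, and then continuity from above of the finite measure $\zeta_t$ along the shrinking sets $(x,x_n]\downarrow\emptyset$, resp.\ $(y_n,x)\downarrow\emptyset$, gives both limits. The paper instead argues by contradiction, extracting a convergent subsequence $s_n\to s$ and treating $s_n\uparrow s$ and $s_n\downarrow s$ separately, with a Portmanteau estimate in the second case. Your one-line monotonicity bound subsumes all of that; the right-continuity hypothesis is not even needed for this part. (You invoke Part~2 to pass to Borel-set monotonicity; strictly speaking one should note that non-decreasingness of $\zeta[0,x]$ for all $x$ forces $\zeta_t-\zeta_s$ to be a genuine non-negative measure, hence set-monotonicity on all Borel sets, but this is a standard one-liner.)

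\textbf{Parts 1 and 4.} Part~1 is correct; you argue via $J_1$-continuity of $\zeta\mapsto\langle f,\zeta_\cdot\rangle$ and $J_1$-closedness of the cone of non-decreasing paths, whereas the paper works directly with continuity points of the limit path and a diagonal argument for non-continuity points. Both are fine. Part~4 is also correct; you appeal to the identification of the Borel $\sigma$-algebra on $\D_\calM$ with $\sigma(\pi_t)$ and on $\calM$ with $\sigma(\nu\mapsto\nu[0,x])$, while the paper proceeds through an explicit functional monotone class argument. Same content, different packaging.
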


We now define a solution $(\xi,\beta,\iota)$ to the
MVSP with data $(\al,\mu)$. As shown in Sections \ref{subs-kclass} and
\ref{sec31}, the definition of the \MVSP given below can be seen as a
natural generalization of the equations \eqref{08} that describe a $K$-class
priority queue with $\alpha$ modeling the arrivals and $\mu$ the
service rate to the case when there is a continuum of priority
classes.   The reader interested primarily in the queueing application
may find it useful to read Section \ref{subs-kclass} before looking at
Definition \ref{defi-1}.

\begin{definition}\label{defi-1} {\bf (\MVSPns)}
Let $(\al,\mu)\in\D_\calM^\up\times\D_\R^\up$.  Then
$(\xi,\beta,\iota)\in\D_\calM\times\D_\calM^\up\times\D_\R^\up$ is said to solve
the \MVSP for the data $(\al,\mu)$ if, for each $x\in\R_+$,
\begin{itemize}
\item[1.] $\xi[0,x]=\al[0,x]-\mu+\beta(x,\iy)+\iota$,
\item[2.] $\xi[0, x]=0$ $d\beta(x,\iy)$-a.e.,
\item[3.] $\xi[0,x]=0$ $d\io$-a.e.,
\item[4.] $\beta[0,\iy)+\io=\mu$.
\end{itemize}
\end{definition}

\begin{remark}
\label{rem-IDSP}
{\em If   $(\xi,\beta,\iota)\in\D_\calM\times\D_\calM^\up\times\D_\R^\up$
solves the \MVSP for the data $(\al,\mu)$ $ \in
\D_\calM^\up\times\D_\R^\up$, then for each fixed $t$, sending $x
\rightarrow \infty$ in property 1, we see that
\begin{equation}
\label{prop-2}
\xi_t[0,\iy)=\al_t[0,\iy)-\mu+\io, \qquad t \geq 0.
\end{equation}
properties 1 and 4 of Definition \ref{defi-1} imply that
for $t \ge 0$,  we have the simple balance relation
$\xi_t[0,x]=\al_t[0,x]-\beta_t[0,x]$ for $x \in \R_+$,
and therefore that
\begin{equation}\label{16}
\xi_t(A)=\al_t(A)-\beta_t(A),  \quad A \in \calB (\R_+).
\end{equation}
In turn, note that \eqref{16} implies that for every $t \geq 0$,
\begin{equation}\label{ac}
 \xi_t  \ll \al_t \qquad \mbox{ and } \qquad \beta_t \ll \al_t,
\end{equation}
where recall that $\nu \ll \nu^\prime$  denotes that
$\nu$ is absolutely continuous with respect to $\nu^\prime$.
}
\end{remark}

We now establish an alternative
characterization of the \MVSP in terms of the SP on the half-line, which
is useful for establishing uniqueness of a solution to the \MVSP.

\begin{lemma}
\label{lem-altIDSP}
Let $(\al,\mu)\in\D_\calM^\up\times\D_\R^\up$
and let $\Gam$
 be the SM on the half-line (see Definition \ref{20}).
Then $(\xi,\beta,\iota)\in\D_\calM\times\D_\calM^\up\times\D_\R^\up$
satisfy properties 1--4 of Definition \ref{defi-1}  if and only if
\begin{equation}\label{502-}
\big(\xi[0,x],\beta(x,\iy)+\io\big)=\Gam\big[\al[0,x]-\mu\big],\qquad x\in\R_+,
\end{equation}
and
\begin{equation}
\label{502int}
\beta (\{0\}) = \alpha (\{0\}) - \xi (\{0\}).
\end{equation}
Moreover, if $(\xi,\beta,\iota)$ satisfy properties 1--4 of Definition
\ref{defi-1}
then
 \begin{equation}\label{502+}
 \big(\xi[0,\iy),\io\big)=\Gam\big[\al[0,\iy)-\mu\big],
 \end{equation}
and for every $x \in \R_+$,
\begin{equation}
\label{502++}
\big(\xi[0,x],\beta(x,\iy)\big)=\Gam\big[\al[0,x]-\mu +\iota\big],\qquad
x\in\R_+.
\end{equation}
\end{lemma}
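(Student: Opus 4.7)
The plan is to identify, for each fixed $x \geq 0$, conditions 1--3 of Definition \ref{defi-1} with the defining conditions of the SP on the half-line, applied to the triple with input $\psi_x := \alpha[0,x] - \mu$, output $\varphi_x := \xi[0,x]$, and regulator $\eta_x := \beta(x,\infty) + \iota$. Before invoking uniqueness of $\Gam$, I would first check that $(\varphi_x, \eta_x)$ lies in the correct space: $\varphi_x \geq 0$ is immediate from $\xi \in \D_\calM$, while $\eta_x \in \D_\R^\up$ follows from $\iota \in \D_\R^\up$ together with the fact that $t \mapsto \beta_t(x,\infty) = \lim_{y \to \infty} \beta_t(x,y]$ is a pointwise limit of non-decreasing functions (via Lemma \ref{lem-0}(2) applied to $\beta \in \D_\calM^\up$). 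The key observation, used throughout, is that the Lebesgue-Stieltjes measure $d\eta_x$ (see \eqref{meas-LS}) decomposes as $d\beta(x,\infty) + d\iota$ because both summands are non-decreasing and right-continuous; hence the single condition $\varphi_x = 0$ $d\eta_x$-a.e.\ is equivalent to the conjunction of $\varphi_x = 0$ $d\beta(x,\infty)$-a.e.\ and $\varphi_x = 0$ $d\iota$-a.e.

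With that setup, the forward implication (conditions 1--4 $\Rightarrow$ \eqref{502-} and \eqref{502int}) reads off as follows: condition 1 is the additive identity $\varphi_x = \psi_x + \eta_x$; conditions 2--3 combine, via the above observation, into $\varphi_x = 0$ $d\eta_x$-a.e.; uniqueness of $\Gam$ then delivers \eqref{502-}. To obtain \eqref{502int}, I would evaluate condition 1 at $x = 0$ to get $\xi(\{0\}) = \alpha(\{0\}) - \mu + \beta(0,\infty) + \iota$, and subtract this from condition 4 written as $\mu = \beta(\{0\}) + \beta(0,\infty) + \iota$; the $\beta(0,\infty) + \iota$ terms cancel, yielding \eqref{502int}. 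The reverse direction inverts these manipulations: given \eqref{502-}, condition 1 is the additive identity of the SP, conditions 2 and 3 are recovered separately from the SP's complementarity using the split of $d\eta_x$, and condition 4 is retrieved by combining \eqref{502-} at $x = 0$ with \eqref{502int}.

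For the ``moreover'' part, \eqref{502++} follows by regrouping condition 1 as $\xi[0,x] = (\alpha[0,x] - \mu + \iota) + \beta(x,\infty)$ and observing that condition 2 is precisely the SP complementarity for the pair $(\xi[0,x], \beta(x,\infty))$ with input $\alpha[0,x] - \mu + \iota$; uniqueness of $\Gam$ then gives \eqref{502++}. For \eqref{502+}, I take $x \to \infty$ in \eqref{502-}: since $\beta_t$ is a finite measure, $\beta_t(x,\infty) \to 0$ pointwise, and monotone convergence gives $\alpha_t[0,x] \to \alpha_t[0,\infty)$ and $\xi_t[0,x] \to \xi_t[0,\infty)$, so the identity $\xi[0,\infty) = \alpha[0,\infty) - \mu + \iota$ and non-negativity pass through directly. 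The remaining condition $\xi[0,\infty) = 0$ $d\iota$-a.e.\ follows by applying condition 3 along a countable sequence $x_n \uparrow \infty$ (taken, say, in $\Q \cap \R_+$), since $\{t : \xi_t[0,\infty) > 0\} = \bigcup_n \{t : \xi_t[0,x_n] > 0\}$ is then a countable union of $d\iota$-null sets.

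The only subtle point -- more a matter of attention than a real obstacle -- is that condition 4 is \emph{not} implicit in \eqref{502-}. Since the SP at $x = 0$ only pins down the sum $\beta(0,\infty) + \iota$, the atom $\beta(\{0\})$ remains unconstrained by \eqref{502-} alone and must be fixed by the supplementary equation \eqref{502int}. This explains why \eqref{502int} is indispensable for the ``if and only if'' statement.
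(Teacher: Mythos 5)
Your proposal is correct and follows essentially the same route as the paper's proof: identify the MVSP conditions at each fixed $x$ with the SP conditions for the pair $(\xi[0,x], \beta(x,\infty)+\iota)$ using the decomposition $d(\beta(x,\infty)+\iota) = d\beta(x,\infty) + d\iota$, derive \eqref{502int} by evaluating condition 1 at $x=0$ and subtracting from condition 4, obtain \eqref{502++} by regrouping $\iota$ into the input, and obtain \eqref{502+} by sending $x\to\infty$. Your explicit countable-union argument for the $d\iota$-a.e.\ condition in \eqref{502+}, and your closing remark on why \eqref{502int} is genuinely needed, are slight elaborations but do not constitute a different approach.
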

\proof
First, suppose properties 1--4 of Definition \ref{defi-1} are
satisfied.   Then, properties 2 and 3 of Definition \ref{defi-1} are
equivalent to the conditions $\xi[0, x]=0$ $d(\beta(x,\iy) +
\io)$-a.e.\ for every $x \in \R_+$.
By Definition \ref{def-SP} of the SM $\Gam$  the latter two relations
in conjunction with property  1
of Definition \ref{defi-1} and \eqref{prop-2} imply  \eqref{502-}.
 Now, property 1 of the \MVSP implies  that
\begin{equation}
\label{alt-temp}
\xi (\{0\}) = \alpha (\{0\}) - \mu + \beta(0,\infty) + \iota.
\end{equation}
Together with  property 4 of the \MVSP, which can be rewritten as $\beta(0,\infty) = \mu - \iota -
\beta (\{0\})$, this implies  \eqref{502int}.
Also, note that \eqref{502-} implies that for every $t, x \in \R_+$,
$\xi_t[0,x] = \alpha_t[0,x] - \mu (t) + \beta_t(x,\infty) +
\iota(t)$ and that $\xi_t[0,x] = 0$ for $d\iota$ a.e.\ $t$.
  Sending $x \rightarrow \infty$ we see that
$\xi_t[0,\infty) = \alpha_t [0,\infty) - \mu (t) + \iota (t)$ and
$\xi_t [0,\infty) = 0$ for $d\iota$ a.e.\ $t$, and therefore,
\eqref{502+} follows.
On the other hand,  \eqref{502++} follows
from property 1 of Definition \ref{defi-1}, the fact that $\xi[0,x]
\geq 0$,   $\beta(x, \infty) \in \D_{\R}^\up$ and property 2 of
Definition \ref{defi-1}.

Now, for the converse, suppose \eqref{502-}-\eqref{502int} holds. Then the definition of
$\Gam$ shows that properties 1 and
3 of  Definition \ref{def-IDSM} hold,  and also
that $\xi_t[0,x] = 0$ $d(\beta(x,\infty) + \iota)$-a.e., which implies
properties 2 and 3 since $\beta(x,\infty)$ and $\iota$ are both
non-decreasing.
 Now, \eqref{502-} with $x = 0$ implies \eqref{alt-temp},
which when combined with  \eqref{502int} implies property 4 of the \MVSP.
This completes the proof of the first assertion of the
lemma.
\qed

\skp

We now show that the MVSP has a unique solution and preserves certain continuity properties.
Analogous to \eqref{500}, we let $\C^\up_\R$ denote the
subset  of functions in  $\C_\R$ that are non-negative and
non-decreasing, and define
\begin{equation}
\C_\calM^\up:=\{\zeta\in \C_\calM:\ t\mapsto\lan f, \zeta_t\ran \ \text{is
non-decreasing $\forall f\in\C_{b,+}(\R_+)$}\},
\end{equation}
and let
\begin{equation}\label{503}
\Cinc:=\{\zeta\in\C_\calM^\up :\ \text{for each $t$, $\zeta_t\in\calM_0$}\},
\end{equation}
where recall that  $\calM_0\subset\calM$ is the subset of measures in
$\calM$ that have no atoms.

\begin{proposition}\label{prop2.1a}
 For every $(\al,\mu)\in\Dup\times\D_\R^\up$
there exists a unique solution $(\xi,\beta,\io)\in\D_\calM\times\Dup\times\D_\R^\up$
to the \MVSP.  Moreover, if $\al \in D_{\calM_0}^\up$ then $(\xi,\beta)
\in D_{\calM_0} \times D_{\calM_0}^\up$.  Further, if
$(\al, \mu) \in \C_{\calM}^\up \times \C_\R^\up$,
then the corresponding solution $(\xi,\beta,\io)$ lies
in $\C_\calM\times\C_{\calM}^\up \times\C_\R^\up$.
\end{proposition}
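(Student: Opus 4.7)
The plan is to build on Lemma \ref{lem-altIDSP}, which reduces the \MVSP to a parametric family of one-dimensional SPs indexed by $x \geq 0$, and to use the elementary properties of $\Gam$ collected in Lemmas \ref{lem-1d} and \ref{lem1}. For uniqueness, any solution must satisfy \eqref{502+}, forcing $\io = \Gam_2[\al[0,\iy)-\mu]$; then \eqref{502-} forces $\xi_t[0,x] = \Gam_1[\al[0,x]-\mu](t)$ for every $x \geq 0$, which determines $\xi_t$ as a measure through its distribution function, and \eqref{16} pins down $\beta_t = \al_t - \xi_t$.

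\textbf{Existence.} I take these formulas as definitions: set $\io(t) := \Gam_2[\al[0,\iy)-\mu](t)$ and $F_t(x) := \Gam_1[\al[0,x]-\mu](t)$. To see that $x \mapsto F_t(x)$ is the distribution function of a measure $\xi_t \in \calM$: non-negativity is built into $\Gam_1$; monotonicity in $x$ uses Lemma \ref{lem-1d}(1) applied to inputs differing by $\al(x,y] \in \D_\R^\up$ (via Lemma \ref{lem-0}(2)); right-continuity in $x$ follows from the Lipschitz bound of Lemma \ref{lem-1d}(2) combined with Lemma \ref{lem-0}(3); boundedness is immediate. Setting $\beta_t := \al_t - \xi_t$, the formula $\Gam_2[\al[0,x]-\mu](t) = \sup_{s \leq t}(\mu(s)-\al_s[0,x])^+ \leq \mu(t)$ yields $\xi_t[0,x] \leq \al_t[0,x]$, so $\beta_t \in \calM$. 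Properties 1--4 of Definition \ref{defi-1} then follow from the converse direction of Lemma \ref{lem-altIDSP}, since \eqref{502-} and \eqref{502int} hold by construction.

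\textbf{Path regularity (the main obstacle).} That $\xi \in \D_\calM$ comes from pointwise-in-$x$ CDF convergence plus total-mass convergence: for $t_n \to t$ (from either side), $\xi_{t_n}[0,x] = \Gam_1[\al[0,x]-\mu](t_n)$ converges for every $x$ since $\Gam_1$ maps $\D_\R$ to $\D_\R$; on the left-limit side, the limit $F(x) := \Gam_1[\al[0,x]-\mu](t-)$ is shown to be a legitimate CDF by applying the Lipschitz-plus-Lemma~\ref{lem-0}(3) argument to $\al_{t-}$ in place of $\al_t$, and weak convergence in $(\calM, d_\calL)$ then follows from pointwise CDF convergence combined with total-mass convergence (using $1 \in \C_b(\R_+)$). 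That $\beta \in \Dup$ has two parts. Monotonicity of $t \mapsto \beta_t(x,y]$ for $x<y$ is immediate from $\beta_t(x,y] = \Gam_2[\al[0,x]-\mu](t)-\Gam_2[\al[0,y]-\mu](t)$ and Lemma \ref{lem-1d}(1). Monotonicity of $t \mapsto \beta_t[0,x]$---the chief technical obstacle---is handled via the shift identity of Lemma \ref{lem1}(1): for $s \leq t$,
\[
\xi_t[0,x] \;=\; \Gam_1\big[\xi_s[0,x] + (\al[0,x]-\mu)^s\big](t-s),
\]
where the input is dominated by the non-negative function $r \mapsto \xi_s[0,x] + \al_{s+r}[0,x] - \al_s[0,x]$, with difference $r \mapsto \mu(s+r)-\mu(s)$ lying in $\D_\R^\up$. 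Since $\Gam_1$ acts as the identity on non-negative inputs, Lemma \ref{lem-1d}(1) yields $\xi_t[0,x] - \xi_s[0,x] \leq \al_t[0,x] - \al_s[0,x]$, i.e.\ $\beta_t[0,x] \geq \beta_s[0,x]$.

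\textbf{Special cases.} If $\al \in \D_{\calM_0}^\up$, atomlessness of $\xi_t$ and $\beta_t$ follows from the identity $\al_t = \xi_t + \beta_t$ of non-negative measures. If $(\al,\mu) \in \C_\calM^\up \times \C_\R^\up$, then for every $x \geq 0$ the map $t \mapsto \al_t[0,x]$ is continuous (for $s \leq t$, $\al_t - \al_s$ is a non-negative measure and $(\al_t-\al_s)(\R_+) \to 0$ as $t \to s$ by weak convergence); since $\Gam_1, \Gam_2$ preserve continuity, each $t \mapsto \xi_t[0,x]$ and $\io$ are continuous, so $\xi \in \C_\calM$ by the CDF-plus-total-mass argument above, and then $\beta = \al - \xi \in \C_\calM$, which combined with $\beta \in \Dup$ gives $\beta \in \C_\calM^\up$.
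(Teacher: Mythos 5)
Your proof is correct, and while it follows the same high-level strategy as the paper (reduce to a parametric family of one-dimensional SPs through Lemma \ref{lem-altIDSP}, then define the candidate solution explicitly and check regularity), several of the key steps are handled by a genuinely different route. The paper constructs the candidate through \eqref{502++}: it sets $\tilde\xi(t,x)=\Gam_1[\alpha[0,x]-\mu+\iota](t)$ and $\tilde\beta(t,x)=\Gam_2[\alpha[0,x]-\mu+\iota](t)$ so that $\beta(x,\infty)$ is tracked as its own quantity, and recovers $\beta[0,\infty)$ via \eqref{beta-zer}; you instead set $\xi_t[0,x]:=\Gam_1[\alpha[0,x]-\mu](t)$ (which is the same function, by \eqref{502-}) and take $\beta_t:=\alpha_t-\xi_t$ directly from the balance equation. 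The biggest difference is how you show $t\mapsto\beta_t[0,x]$ is non-decreasing. The paper's proof decomposes $\beta[0,x]=\beta(\{0\})+\beta(0,x]$, handles $\beta(0,x]$ by Lemma \ref{lem-1d}(1), and for $\beta(\{0\})=\mu-\iota+\beta(0,\infty)$ it proves separately (its assertion (ii)) that $\mu-\iota\in\D_\R^\up$ via a further application of Lemma \ref{lem-1d}(1) with $\psi_1=-\mu$, $\psi_2=\alpha[0,\infty)-\mu$. Your shift-identity argument—writing $\xi_t[0,x]=\Gam_1[\xi_s[0,x]+(\alpha[0,x]-\mu)^s](t-s)$, dominating the input by the non-negative $r\mapsto\xi_s[0,x]+\alpha^s_r[0,x]$ on which $\Gam_1$ is the identity, and invoking Lemma \ref{lem-1d}(1)—is more uniform (it treats $x=0$ and $x>0$ identically) and entirely bypasses the intermediate claim about $\mu-\iota$; it is a cleaner way to get $\beta\in\Dup$. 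You also establish left limits of $\xi$ directly and deduce those of $\beta$, whereas the paper does the reverse. One spot where you are a bit too quick: you assert that \eqref{502-} and \eqref{502int} hold ``by construction,'' but the second component of \eqref{502-} (equivalently, the identity $\xi_t[0,\infty)=\alpha_t[0,\infty)-\mu(t)+\iota(t)$) needs the small Lipschitz estimate $\lim_{x\to\infty}\Gam_1[\alpha[0,x]-\mu](t)=\Gam_1[\alpha[0,\infty)-\mu](t)$, which is the content of the paper's assertion (iii); it is easy, but it is a verification, not a tautology. The treatment of the two special cases (atomlessness via $\alpha_t=\xi_t+\beta_t$, and continuity via continuity of $t\mapsto\alpha_t[0,x]$ for $\alpha\in\C_\calM^\up$) matches the paper's.
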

\proof
 Fix $(\al,\mu) \in \Dup\times\D_\R^\up$.  We first explicitly construct
  a candidate solution to the \MVSP for $(\al,\mu)$.
  It follows from \eqref{502+} of Lemma \ref{lem-altIDSP} that  the $\iota$-component of
 the solution must satisfy
\begin{equation}
\label{iota-rel}
 \iota  :=  \Gam_2 [\al[0,\infty)-\mu].
\end{equation}
Note that $\iota$ thus defined does indeed lie in $\D_\R^\up$.
Next, in view of relation  \eqref{502++} of Lemma \ref{lem-altIDSP},
the $\xi$-component of the \MVSP (if it exists) must
satisfy
\begin{equation}
\label{cand-xi}
\xi_t [0, x] = \tilde \xi (t,x) := \Gam_1 [\alpha[0,x] -
\mu + \iota] (t),  \qquad t, x \in \R_+,
\end{equation}
and  the $\beta$-component must satisfy
\begin{equation}
\label{cand-beta}
\beta_t(x, \infty) =  \tilde \beta(t,x) := \Gam_2  [\alpha[0,x] -
\mu + \iota] (t),  \qquad t, x \in \R_+,
\end{equation}
which, together with \eqref{502+} of Lemma \ref{lem-altIDSP}, shows
that $\beta[0,\infty)$ must satisfy the relation
\begin{equation}
\label{beta-zer}
\beta_t[0,\infty)  = \tilde \beta(t,0) + \alpha_t(\{0\}) - \tilde \xi (t,0).
\end{equation}

Since \eqref{cand-xi} and \eqref{cand-beta} imply \eqref{502-}, and
\eqref{cand-beta} and \eqref{beta-zer} imply \eqref{502int},  by
Lemma \ref{lem-altIDSP},
$(\xi, \beta, \iota)$ satisfy properties (1)-(4) of Definition
\ref{defi-1}. Thus,
to show that
$(\xi, \beta, \iota)$ solve  the \MVSP for $(\alpha, \mu)$, it
suffices to show that the quantities $\xi$ and $\beta$ defined via
\eqref{cand-xi}--\eqref{beta-zer} lie in the right spaces:
namely, that (a) for $t> 0$, $\xi_t \in \calM$, $\beta_t \in \calM$,   and
(b) $\xi \in \D_{\calM}$ and $\beta \in \Dup$.    \\
We start by establishing three assertions
that clearly imply property (a).  \\
(i)  For $t \geq 0$, the map $x \mapsto \tilde{\xi}(t,x)$ lies  in $\D_{\R}^\up$ and
the map $x \mapsto
\tilde{\beta}(t,x)$  is right-continuous, non-negative  and
non-increasing; moreover, both maps are continuous if $\alpha \in \calM_0$;\\
(ii)  the map $t \mapsto \alpha_t (\{0\}) - \tilde \xi (t, 0)$ lies in
$\D_{\R}^\up$; \\
(iii) $\sup_{t \in [0,T]} \sup_x \tilde{\xi} (t,x) < \infty$ and for
$t \geq 0$,
\begin{equation}
\label{propiii}
\lim_{x \rightarrow \infty} \tilde{\xi}(t,x) = \Gam_1
[\alpha[0,\infty) - \mu + \iota] (t) = \alpha_t[0,\infty) - \mu (t) +
\iota (t).
\end{equation}
 To prove assertion (i), fix $t \geq 0$ and first notice that
by \eqref{cand-xi}, \eqref{cand-beta} and the
definition of the SM $\Gam$,
$\tilde{\xi}(t,x)$ and $\tilde \beta (t,x)$ are non-negative for every
$x \geq 0$.
Next, we establish the monotonicity of $\tilde \xi (t, \cdot)$ and
$\tilde \beta (t, \cdot)$.  Let $0 \leq x_1 \leq x_2 < \infty$ and for $j = 1, 2$, define
$\psi_j := \alpha [0,x_j] - \mu + \iota$.  Then $\psi_2 - \psi_1 =
\alpha (x_1, x_2]$, which lies in $\D_\R^\up$ by Lemma
\ref{lem-0}(2).
Therefore, by \eqref{cand-xi}, \eqref{cand-beta} and  the monotonicity property in
Lemma \ref{lem-1d}(1), it follows that
$\tilde{\xi} (t, x_2) - \tilde{\xi}(t,x_1)  \geq 0$, and
\begin{equation}
\label{tbeta-rtcon}
  t \mapsto \tilde \xi (t, x) \in \D_{\R}, \qquad t \mapsto \tilde{\beta} (t, x_1) - \tilde{\beta} (t,x_2)  \in
\D_{\R}^\up, \qquad t \mapsto \tilde{\beta}(t,0) \in \D_{\R}^{\up}.
\end{equation}
The monotonicity property also shows that both $x \mapsto
\tilde{\beta}(t,x)$ and $x \mapsto \tilde{\xi}(x,\cdot)$ have
finite left limits on $(0,\infty)$.
Next, to show right-continuity of $\tilde \xi (t, \cdot)$ and $\tilde
\beta(t, \cdot)$,  note that \eqref{cand-xi}, \eqref{cand-beta} and the Lipschitz property in Lemma
\ref{lem-1d}(2) imply
\begin{equation}
\label{ineq-lc}
 |\tilde{\xi} (t, x_2) -
\tilde{\xi}(t,x_1) | \vee |\tilde{\beta} (t, x_1) - \tilde{\beta}
(t,x_2)| \leq 2 ||\psi_2 - \psi_1||_t  \leq 2\sup_{s \in
  [0,t]}|\alpha_s (x_1, x_2]|.
\end{equation}
Sending $x_2  \downarrow x_1$, the right-hand side goes
to zero by Lemma \ref{lem-0}(3) and the fact that $\alpha \in
\D_{\calM}^\up$.   This completes the proof of the first assertion of
(i).   For the second assertion, first we  claim that
\eqref{cand-xi} implies that for every $x \geq 0$, $\xi[0,x) =
\Gam(\psi^x)$, where $\psi^x \doteq
\alpha [0,x)  - \mu + \iota$.
Indeed, this can be seen by taking a sequence $y_n
\uparrow x$, and setting $\psi_n := \alpha [0,y_n] - \mu + \iota$,
and noting that $\psi_n \rightarrow \psi$ uniformly on compacts due to
 \eqref{505}.   However, if $\alpha \in \D_{{\calM_0}}^\up$ then
 $\psi^x = \alpha [0,x] - \mu + \iota$ and, hence,
 for every $t \geq 0$, \eqref{cand-xi} and \eqref{cand-beta} show that
$\xi_t[0,x] = \xi_t [0,x)$ for every $x \geq 0$, and $\beta_t(\{x\}) =
0$ for every $x > 0$.  Moreover, then \eqref{beta-zer}  shows that we
also have $\beta_t(\{0\}) = 0$.  This proves $\xi_t, \beta_t \in
\calM_0$ for every $t \geq 0$ and thus concludes the proof of (i).

To prove property (ii), using   \eqref{beta-zer}, as well as  \eqref{cand-xi} and
\eqref{cand-beta} with $x = 0$,  we obtain
\[ \beta(\{0\}) = \alpha (\{0\}) - \xi(\{0\}) = \mu - \iota + \beta
(0,\infty). \]
From  \eqref{cand-beta} with $x=0$, the fact that $\alpha (\{0\}) -
\mu + \iota\in
\D_{\R}$, and the definition of $\Gam_2$ in \eqref{20},  it follows that
$\beta(0,\infty) \in \D_{\R}^\up$.
Thus, to establish the  claim it suffices to show
that $\mu - \iota \in \D_\R^\up$.
Since  $\mu \in \D_\R^\up$,  it follows that
$(0,\mu) = \Gam (-\mu)$.   Now, set $\psi_1 = - \mu$ and $\psi_2 =
 \alpha [0,\infty) - \mu$,  and observe that  $\iota = \Gam_2
 (\psi_2)$,   $\psi_2 - \psi_1 = \alpha[0,\infty)
 \in \D_\R^\up$ and $\Gam_2(\psi_1) - \Gam_2(\psi_2) =   \mu -
 \iota$.  The claim then follows from   Lemma \ref{lem-1d}(1),
 and thus property (ii) is proved.

To prove property (iii), note that   by  \eqref{cand-xi} and the definition of
$\Gam_1$ in \eqref{20}, for every $x, t  \in \R_+$,
$\tilde{\xi} (t,x) \leq \alpha_t[0,x] +  2 ||\mu -\iota||_t$.
Thus,
\[ \sup_{t \in [0,T]} \sup_{x \in \R_+} \tilde{\xi} (t,x) \leq \sup_{t
  \in [0,T]} \sup_{x \in \R_+} \alpha_t[0,x] + 2 ||\mu - \iota||_T =
\alpha_T[0,\infty) + 2 ||\mu - \iota||_T < \infty,
\]
where the last equality uses the monotonicity of $\alpha$.
Next, to show \eqref{propiii},  send $x \rightarrow \infty$ in
\eqref{cand-xi},  use the Lipschitz continuity of $\Gam_1$ and
the fact that $\alpha[0,x] \rightarrow \alpha [0,\infty)$, to see that
the first equality in \eqref{propiii} holds.  The second equality
follows because \eqref{iota-rel} and the definition of $\Gam_2$
imply that $\alpha[0,\infty) - \mu + \iota \geq 0$,
which in turn implies that $\Gam_1$ leaves $\alpha[0,\infty) - \mu +
\iota$ invariant.  This completes the proof of property (a).

We now turn to the proof of property (b).
For any $0 \leq  x  < y$,
 \eqref{tbeta-rtcon} and \eqref{cand-beta} show  that $t \mapsto \beta_t (0,x]$ and $t
\mapsto \beta_t (x,y]$ lie in
$\D_\R^\up$, and from property (ii) above, \eqref{beta-zer} and
\eqref{cand-beta} we see
that $t \mapsto \beta_t(\{0\}) \in \D_{\R}^\up$.  Thus, for every $x
\geq 0$, $\beta_{\cdot}[0,x] \in \D_{\R}^\up$.   To prove property
(b), it suffices show that $\beta \in \D_{\calM}$ because then
Lemma \ref{lem-0}(2) implies $\beta \in \Dup$   and,  since $(\xi, \beta,
\iota)$ satisfy properties 1 and 4 of Definition
\ref{defi-1},  \eqref{16} and the fact
that $\alpha \in D_{\calM}$ imply  $\xi \in \D_{\calM}$.
To show $\beta \in \D_{\calM}$,
  fix   a sequence $\{s_n\} \subset \R_+$. If $s_n \downarrow s$ for some $s \geq 0$,
then for every $x \in [0,\infty)$, the fact that
$\beta_{\cdot}[0,x] \in \D_{\R}^\up$ implies $\beta_{s_n} [0,x]
\rightarrow \beta_s [0,x]$,  which proves that $\beta_{s_n} \rightarrow \beta_s$ in
$\calM$.  We now  show that $t \mapsto \beta_t \in \calM$ has left
limits.  Next, fix $s > 0$ and a sequence $\{s_n\}$ such that $s_n
\uparrow s$.    For every $x \geq 0$, the fact that
$\beta_{\cdot}[0,x] \in \D_{\R}^\up$ implies that $\tilde \nu (x) \doteq \lim_{s_n \uparrow s}
\beta_{s_n}[0,x]$ exists and is finite.
It only remains to show that $x \mapsto \tilde \nu(x)$ lies in
$\D_{\R}^\up$, since this would imply that $\beta_{s_n} \rightarrow
\nu \in \calM$, where $\nu [0,x] := \tilde \nu (x)$, $x \geq 0$.
The monotonicity (and therefore existence of finite left limits) of
$\tilde \nu$ follows immediately from the monotonicity of
$x \mapsto \beta_t[0,x]$ for each $t \geq 0$.
Also, given the monotonicity and  right continuity of $t \mapsto \beta_t$ in
$\calM$ established above, it follows from \eqref{505} that for every $x \geq 0$,
$\lim_{x_k \downarrow x} \sup_{n \in \N} \beta_{s_n} (x,x_k]  = 0$,
which in turn implies that $|\tilde \nu (x_k) - \tilde \nu (x) |
\rightarrow  0$ as $x_k \downarrow x$.   This completes the proof that $\tilde \nu \in
\D_{\R}^\up$ and establishes property (b) and hence,  the first assertion of the proposition.

 The second
assertion follows from the first due to \eqref{ac}.  The last
assertion can be proved using arguments exactly analogous to those
used in the proof of the first assertion (using the fact that the
SM $\Gam$ maps $\C_{\R}$ into $\C_{\R} \times C_{\R}^\up$), and is thus omitted.
\qed

\skp

Given the uniqueness result of Proposition \ref{prop2.1a} we can now
define the \MVSMns.

\begin{definition} (\MVSMns)
\label{def-IDSM}
Let $\Th:\D_\calM^\up\times\D_\R^\up\to\D_\calM\times\D_\calM^\up\times\D_\R^\up$
denote the map that takes $(\alpha, \mu) \in
\D_\calM^\up\times\D_\R^\up$ to the unique solution $(\xi, \beta,
\iota) \in \D_\calM \times \D_\calM^\up\times\D_\R^\up$ of the \MVSPns.  We will refer to
$\Th$ as the \MVSMns.
\end{definition}

We now establish some regularity properties of the \MVSMns.

\begin{proposition}
\label{prop2.1b}
The map $\Th$ satisfies the following two properties.
\begin{enumerate}
\item Suppose the sequence $(\alpha^k, \mu^k)$, $k \in \mathbb{N}$,
  converges in $\D_{\calM\times\R}$ to $(\alpha, \mu) \in \D_{\calM_0}^\up
\times\D_{\R}^\up.$  Then $\Th (\alpha^k, \mu^k)  \rightarrow \Th (\alpha, \mu)$  in
$\D_{\calM \times \calM \times \R}$.
 In particular, $\Th$ is continuous on
$\Cinc\times\C_{\R}^\up$.
\item
The map $\Th: \D_\calM^\up\times\D_\R^\up \mapsto
\D_\calM\times\D_\calM^\up\times\D_\R^\up$ is measurable.
\end{enumerate}
\end{proposition}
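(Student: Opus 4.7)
My plan is to leverage the explicit slicewise representation derived in the proof of Proposition~\ref{prop2.1a}: writing $(\xi,\beta,\iota) = \Th(\alpha,\mu)$, one has
\[
\iota = \Gam_2\bigl[\alpha[0,\infty)-\mu\bigr], \quad
\xi_\cdot[0,x] = \Gam_1\bigl[\alpha[0,x]-\mu+\iota\bigr], \quad
\beta_\cdot(x,\infty) = \Gam_2\bigl[\alpha[0,x]-\mu+\iota\bigr],
\]
together with $\beta_\cdot[0,\infty) = \mu-\iota$ (from properties 1 and 4 of Definition~\ref{defi-1}) and $\beta_\cdot[0,x] = \beta_\cdot[0,\infty) - \beta_\cdot(x,\infty)$. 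Both assertions will be reduced to one-dimensional statements via this representation, using the Lipschitz continuity of $\Gam$ (Lemma~\ref{lem-1d}(2)) and the fact that $\Gam_1,\Gam_2$ commute with strictly increasing continuous time changes.

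For part~1, convergence in $\D_{\calM\times\R}$ furnishes, for each $T > 0$, time changes $\lambda_k$ with $\|\lambda_k - \mathrm{id}\|_T \to 0$, $\sup_{t \le T} d_\calL(\alpha^k_{\lambda_k(t)},\alpha_t) \to 0$, and $\|\mu^k\circ\lambda_k - \mu\|_T \to 0$. The crucial step is to upgrade this to
\[
\sup_{t\le T,\, x\ge 0}\bigl|\alpha^k_{\lambda_k(t)}[0,x] - \alpha_t[0,x]\bigr| \to 0,
\]
which by the right-hand inequality in \eqref{comp-dmdk} reduces to showing $\sup_{t\le T}\osc_\eps(\alpha_t[0,\cdot]) \to 0$ as $\eps \to 0$. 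This equicontinuity follows because Lemma~\ref{lem-0}(2) yields $\alpha_t(x,x+\eps] \le \alpha_T(x,x+\eps]$ for every $t \le T$, and the hypothesis $\alpha_T \in \calM_0$ makes $x \mapsto \alpha_T[0,x]$ a non-decreasing continuous function with a finite limit at $\infty$, hence uniformly continuous on $[0,\infty)$. The same bound applied at $x=\infty$ delivers $\sup_{t\le T}|\alpha^k_{\lambda_k(t)}[0,\infty) - \alpha_t[0,\infty)| \to 0$. Applying the Lipschitz property of $\Gam$ slicewise in $x$ (using commutation with $\lambda_k$) then gives $\|\iota^k\circ\lambda_k - \iota\|_T \to 0$ and $\sup_{t\le T,\,x\ge 0}|\xi^k_{\lambda_k(t)}[0,x]-\xi_t[0,x]|\vee|\beta^k_{\lambda_k(t)}(x,\infty)-\beta_t(x,\infty)| \to 0$. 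Combining with the left-hand inequality of \eqref{comp-dmdk} and with $\beta_\cdot[0,x] = (\mu-\iota) - \beta_\cdot(x,\infty)$ yields $\sup_{t\le T}[d_\calL(\xi^k_{\lambda_k(t)},\xi_t) + d_\calL(\beta^k_{\lambda_k(t)},\beta_t)] \to 0$, establishing convergence in $\D_{\calM\times\calM\times\R}$ under the common time change $\lambda_k$. Continuity on $\Cinc\times\C_\R^\up$ is an immediate specialization, since on that subspace uniform convergence and Skorokhod convergence coincide.

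For part~2, by Lemma~\ref{lem-0}(4) it suffices to verify that for each fixed $t,x\ge 0$ the coordinate maps $(\alpha,\mu)\mapsto \xi_t[0,x]$, $(\alpha,\mu)\mapsto \beta_t[0,x]$, and $(\alpha,\mu)\mapsto \iota(t)$ are Borel measurable on $\D_\calM^\up \times \D_\R^\up$. Via the formulas above, each factors as a composition of the evaluation-in-$x$ map $(\alpha,\mu)\mapsto(\alpha_\cdot[0,x],\mu) \in \D_\R \times \D_\R$ (measurable by Lemma~\ref{lem-0}(4), since the Borel $\sigma$-algebra on $\D_\R$ is generated by point evaluations) with the continuous one-dimensional map $\Gam$ and elementary arithmetic, and is thus measurable. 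The principal obstacle is the equicontinuity estimate in part~1: the hypothesis $\alpha \in \D_{\calM_0}^\up$ cannot be dispensed with, because at any $(t,x)$ with $\alpha_t(\{x\}) > 0$ the slice map $\alpha\mapsto\alpha_\cdot[0,x]$ is already discontinuous on $\D_\calM$, and this discontinuity is transmitted through $\Gam$ to $\Th$.
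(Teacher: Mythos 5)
Your proposal is correct and follows essentially the same route as the paper's proof: slicewise reduction via Lemma~\ref{lem-altIDSP}, the equicontinuity bound on $\alpha_t[0,\cdot]$ afforded by $\alpha_T\in\calM_0$ together with \eqref{comp-dmdk}, the Lipschitz property of $\Gam$ with time-change commutation, and Lemma~\ref{lem-0}(4) for measurability. The one minor difference is your handling of $\beta$: the paper recovers $\beta^k\to\beta$ in $\D_\calM$ via the atom identity $\beta(\{0\})=\alpha(\{0\})-\xi(\{0\})$ of \eqref{502int} (and, for measurability, the balance $\beta=\alpha-\xi$ of \eqref{16}), whereas you use the total-mass balance $\beta[0,\infty)=\mu-\iota$ together with $\beta[0,x]=\beta[0,\infty)-\beta(x,\infty)$, which is marginally more direct and avoids the separate argument that $\beta^k(\{0\})\to 0$.
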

\proof
To prove the first property,
 let $(\xi^k, \beta^k, \iota^k) \doteq  \Th (\alpha^k, \mu^k),$
 $k \in \mathbb{N}$, and let $(\xi, \beta, \iota) \doteq \Th
(\alpha, \mu)$.
 Then by Lemma \ref{lem-altIDSP}, it follows that
for every $x \geq 0$,
\begin{equation}
\label{sp1}
  \iota = \Gam_2 [\alpha[0,\infty) - \mu] \qquad  \mbox{ and }
  \qquad
(\xi[0,x], \beta(x,\infty) + \iota) = \Gam[ \alpha[0,x] - \mu],
\end{equation}
and for every $k \in \mathbb{N}$,
\begin{equation}
\label{sp2}
\iota^k = \Gam_2 [\alpha^k[0,\infty) - \mu^k]
\qquad \mbox{ and  } \qquad (\xi^k[0,x], \beta^k (x, \infty) +
\iota^k) = \Gam [ \alpha^k[0,x] - \mu^k].
\end{equation}
Fix $0 < T < \infty$.
Since $(\alpha^k, \mu^k) \rightarrow (\alpha, \mu)$ in $\D_{\calM
  \times \R}$, there exists a strictly increasing continuous bijection
$\tau^k:[0,T] \mapsto [0,T]$ with $\sup_{t \in [0,T]} |\tau^k(t) - t|
\rightarrow 0$ such that
\[ \lim_{k \rightarrow \infty} \sup_{t \in [0,T]} \left[ d_{\calL}
  (\alpha^k_{\tau^k(t)}, \alpha_t) + |\mu^k(\tau^k(t)) - \mu (t)|
\right] = 0.
\]
Since we have $\alpha \in \D_{\calM_0}^\up$, it follows that for every $\varepsilon > 0$,
$\sup_{t \in [0,T]} Osc_{\varepsilon} (\alpha_t[0,\cdot]) \leq Osc_{\varepsilon}
(\alpha_T[0,\cdot])$ and $\lim_{\varepsilon \downarrow 0} Osc_{\varepsilon}
(\alpha_T[0,\cdot]) = 0$.  Therefore, combining the last display with the inequality
\eqref{comp-dmdk},  we obtain
\[   \lim_{k \rightarrow \infty} \sup_{t \in [0,T]} \sup_{x \in [0,\infty)}
  \left|\alpha^k_{\tau^k(t)}[0,x] - \mu^k (\tau^k(t)) - (\alpha_t[0,x] -  \mu(t)) \right| = 0.
\]
Together  with \eqref{sp1}, \eqref{sp2}, the fact that $\ph = \Gam
(\psi)$ implies $\ph \circ \tau^k = \Gam (\psi \circ \tau^k)$  and the Lipschitz
continuity of $\Gam$ from Lemma \ref{lem-1d}(2), this implies
\begin{equation}
\label{iotak}
 \lim_{k \rightarrow \infty} \sup_{t \in [0,T]} | \iota (\tau^k(t))
- \iota (t)| = 0, \end{equation}
and
\begin{equation}
\label{xik}
 \lim_{k \rightarrow \infty} \sup_{t \in [0,T]} \sup_{x \in
  [0,\infty)} \max \left( \left|\xi^k_{\tau^k(t)}[0,x] - \xi_t [0,x]\right|,
\left|\beta^k_{\tau^k(t)}(x,\infty) - \iota(\tau^k(t)) - (\beta_{\tau(t)}
(x,\infty) - \iota (t)) \right|\right) = 0.
\end{equation}
From \eqref{iotak} we have $\iota^k \rightarrow \iota$ in $\D_{\R}$,
and from \eqref{xik} and \eqref{comp-dmdk}
it follows that one also has $\sup_{t \in [0,T]} d_{\mathcal L} (\xi_{\tau^k(t)}^k,
\xi_t) \rightarrow 0$.
Since $\sup_{t \in [0,T]} |\tau^k(t) - t|
\rightarrow 0$,
by the definition of the Skorokhod topology,
 it follows that   $\xi^k \rightarrow \xi$ in
$\calD_{\calM}$.
Since, by Proposition \ref{prop2.1a}, $\alpha \in \calD_{\calM_0}$ implies $\xi, \beta \in
\calD_{\calM_0}$,  it follows that
$\xi(\{0\}) = \beta(\{0\}) = 0$ and hence, $\xi^k(\{0\}) \rightarrow 0$.  The fact that Lemma
\ref{lem-altIDSP} implies that \eqref{502int}
holds with $\alpha, \beta, \xi$ replaced by
  $\alpha^k, \beta^k, \xi^k$, respectively, then
implies that $\beta^k(\{0\}) \rightarrow 0$, which together with
\eqref{xik} and \eqref{iotak}, implies $\beta^k \rightarrow \beta$ in
$\D_{\calM}.$ This proves the first assertion of the first property.
The second assertion is an immediate consequence of the first and the
fact that if  $(\alpha^k, \mu^k) \rightarrow (\alpha, \mu)$ in the
product topology $\D_{\calM} \times \D_{\R}$, and $(\alpha, \mu) \in \C_{\calM} \times
\C_{\R},$ then $(\alpha^k, \mu^k) \rightarrow (\alpha, \mu)$ in
$\D_{\calM \times \R}$.

We now turn to the proof of  the second property, namely the measurability of $\Th$.
It is clearly enough to establish the measurability of each component
map.   The proof for the third component is easy. Since properties 2
and 4 of the \MVSP imply that $\iota = \Gam_2
(\alpha[0,\infty) - \mu)$, $\Gam_2$ is  continuous, the maps $\alpha \mapsto
\alpha [0,\infty)$ and $(\alpha [0,\infty), \mu)  \mapsto \alpha
[0,\infty) - \mu$ are measurable, it follows
that   $\iota$ is a measurable function of  $(\alpha, \mu)$.
Moreover, in view of Remark \ref{rem-IDSP}, specifically the balance
equation  \eqref{16}, and the fact that addition  map from
$\D_{\calM} \times D_{\calM} \mapsto \D_{\calM}$ is measurable,
measurability of the second component follows
from that of the first.     To show measurability of the first
component, by Lemma \ref{lem-0}(4), we only need to show that for
every $t, x \geq 0$, the map ${\mathcal T}:\Dup\times \D_\R^\up\to\calM$,
defined by ${\mathcal T}(\al, \mu)=\xi_t[0,x]$ is measurable. But this
follows from \eqref{sp1}, the measurability of the maps $(\al, \mu)
\mapsto \al[0,x] - \mu$, $x \geq 0$, and the
continuity of $\Gam_1$.
\qed

\begin{remark}
\label{rem-anakon}
{\em   It is worthwhile to contrast the \MVSM with another
  Skorokhod-type   map that was introduced in \cite{AnaKon05}, which
considered a generalization of the SM in which the
time interval $[0,\infty)$ is replaced by  a general poset
(partially ordered set), and a function on the poset is constrained in
a minimal fashion to lie within two prescribed functions on the
poset.  In the special case when the poset is $\R_+$ and
the prescribed functions are constant functions with values $a < b$, this reduces
to the Skorokhod map on $[a,b]$, also referred to as the
double-barrier Skorokhod map.   When instead, the poset is chosen to be
$\mathbb{T} \doteq [0, \infty)\times {\mathcal B}(R_+)$, with the natural partial ordering
$(t,A) \prec (\tilde{t}, \tilde{A})$ if and only if $t \leq \tilde{t}$
and $A \subseteq \tilde{A}$, then the map in \cite{AnaKon05} yields a
map on measure-valued paths.  Specifically, the pair $(\alpha, \mu) \in
\D_{\calM}^\up \times \D_{\R}^\up$ can be identified with the function
$(t,A) \mapsto \alpha_t(A) - \mu(t)$ on the poset $\mathbb{T}$.
However, the image of this function under the
map of \cite{AnaKon05} with $a=0$ and $b=\iy$,
will correspond to $(\al,\mu)$, providing, roughly speaking,
a Jordan decomposition of the signed measure $\alpha - \mu$.
This does not capture the dynamics we are interested in and obtain from
the \MVSPns, where, in particular,
the temporal component and the space component play different roles.
}
\end{remark}

\section{Some Illustrative Examples}
\label{sec-examples}

In this section, we describe some simple examples that
motivate the form of the \MVSM that was introduced in Section
\ref{subs-IDSP}.  This section can be skipped without loss of
continuity.
We start
  in Section \ref{subs-kclass} by describing  the $K$-class model with priorities and
show how it can be characterized by $K$ coupled SMs on the half-line,
and in Section \ref{sec31} we show how the \MVSM arises naturally when
trying to characterize a continuum version of the $K$-class model. In Section
\ref{sec32}, we briefly show how  two additional policies,
First-In-First-Out (FIFO) and Last-In-First-Out (LIFO),
can be expressed in terms of the  \MVSM.
The discussion in this section is purely formal, and  simply serves to emphasize that the \MVSM and its
relatives arise naturally as a tool for the analysis of  queueing models with
(continuum) priorities, and thus are likely to be useful beyond the
specific models, EDF, SJF and SRPT,  that are considered in detail Sections \ref{sec3}
and \ref{s-conv} of  this paper.

\subsection{The $K$-class Fluid Model With Priorities}
\label{subs-kclass}

Consider a queueing system  that consists of $K$ classes of jobs, each with
a dedicated buffer that is fed by an external fluid arrival stream,
and a single common server that can process material from the buffers at
some specified (maximal) rate $\mu(t)\ge0$.  Let $x_i \geq 0$, $i = 1,
\ldots, K$, represent the inital content of the class i buffer, and
let $X_i(t)$ denote the (non-negative) content of the class  $i$ buffer
at time $t\ge0$.  Let $\hat A_i$ be a non-decreasing function such that
$\hat A_i(t)$ represents the total cumulative mass that arrived into
buffer $i$ during the time interval $[0,t]$.
Assume that the priorities are ordered such that each class $i$ has
priority over all classes $j>i$.
This means that the server can remove content from a class $j$ buffer
only when all class $i$ buffers, $i < j$, are empty.
The functions $A=(A_i)$, $A_i(\cdot):=x_i+\hat A_i(\cdot)$
and $M(\cdot):=\int_0^\cdot\mu(s)ds$ are regarded to be the problem data for this
model, which we will call the {\it $K$-class model with priorities}.

For this model, it is possible to write down a set
of equations and conditions that uniquely characterize $X=(X_i)$ in
terms of the problem data $(A, M)$.
To this end, we now introduce some basic notation.
Recall from Section \ref{subs-not} that $\R_+=[0,\iy)$, $\D_\R=\D_\R(\R_+)$ is  the space of functions
from $\R_+$ to $\R$ that are right continuous with finite left limits
on $(0,\infty)$, endowed with the Skorokhod $J_1$ topology, and
$\D_\R^\uparrow$ is the subspace of non-decreasing functions in
$\D_\R$.  For data $(A, M)\in (\D_\R^\uparrow)^{K+1}$ the model is fully
described by  the following three relations: \\
(i) {\it the balance equation between arrivals and
  departures}: there exist $B_i, 1 \leq i \leq K$, such that
\begin{equation}\label{05}
\begin{split}
& X_i(t)=A_i(t)-\int_{[0,t]}B_i(s)dM_s\ge0, \,   \text{ for $1\le i\le K$, }
\\
\mbox{ where } \\
&B_i(t)\ge0, \quad  B_i(t)\le\ind_{\{X_i(t)>0\}},  \quad   \sum_{i=1}^K B_i(t)
\leq 1.
\end{split}
\end{equation}
Here, $B_i(t)$ represents the fraction of the server's  effort that is
dedicated to class $i$ at time $t$.  \\
(ii)  a standard {\it work conservation} condition, which ensures that
the server works at maximal capacity whenever there is  content in
any buffer:
\begin{equation}\label{06}
\sum_{i=1}^K X_i (t)>0 \text{ implies $\sum_{i=1}^K B_i(t)=1$, \mbox{
    for } $dM$-a.e. $t \in [0,\infty)$;}
\end{equation}
(iii) the {\it priority} condition:
\begin{equation}\label{07}
\text{for $1\le i \leq j\le K$, $X_i(t)>0$ implies $B_j(t)=0$, \mbox{
    for } $dM$-a.e. $t \in [0,\infty)$.}
\end{equation}
For convenience, we also define the idleness process $I$ as follows:
\begin{equation}
\label{def-idle}
I:=1-\sum_{i=1}^K B_i.
\end{equation}

We now show that one can solve for $X\in \D_\R^K$ using repeated
applications of the SM on the half-line
defined in Section \ref{subs-SM}.  First,  for  $H = X, A, B$, and $1 \leq i < j \leq K$,
denote $H[i,j]=\sum_{k=i}^jH_k$, and set
\begin{eqnarray*}
\hat B_i (\cdot)  & := & \int_{[0,\cdot]}B[i+1,K](s)dM_s,\qquad i = 0,
\ldots, K-1, \\
\hat I (\cdot) & := & \int_{[0,\cdot]}I(s)dM_s.
\end{eqnarray*}
Then,  equations \eqref{05}--\eqref{07}, imply that $\hat B_i$, $i=0,\ldots,K-1$,
and $\hat I$ are all members of $\D_\R^\uparrow(\R_+)$, and, with $\hat B_K:=0$,
\begin{equation}\label{08}
\begin{array}{lrclcl}
&X[1,i] & = & A[1,i]-M+\hat B_i+\hat I& \ge& 0,  \quad  i= 1, \ldots, K, \\
& \hat B_0+\hat I & = & M, \\
\text{where } \\
&X[1,i] & = & 0 \text{ d}\hat B_i\text{-a.e.,} & &\quad  i = 1,
\ldots, K-1, \\
& X[1,i] & = & 0 \text{ d}\hat I\text{-a.e.,}  & & \quad  i = 1,
\ldots, K.
\end{array}
\end{equation}
Comparing this set of equations with the SP on the half-line from
Definition \ref{def-SP}, it clearly follows that
\[
(X[1,i], \hat B_i + \hat{I})=\Gam_1\big[A[1,i]-M\big],\qquad
i=1,\ldots,K,
\]
which is exactly analogous to \eqref{502-}.
Thus, we have shown that the buffer content process for the fluid queue with a finite number of priority
classes can be ``solved'' using a finite number of applications of the SM on the half-line.

\subsection{The continuum-priority fluid queue}\label{sec31}

We now consider the formal limit of the $K$-class model with priorities, as
$K$, the number of classes, increases to infinity and the arrival rate to
each class is scaled down by a factor $1/K$.
With a view to describing such a limit, first, for each finite $K$,
note that we can map  the set of classes in the $K$-class model to the interval
$[0,1]$ by identifying each class
$i \in \{1, \ldots, K\}$ with the number $1/i \in \{1/K,\ldots,1\} \subset
[0,1]$.  The priority rule then translates to the condition
that for each  $x\in (0,1]$, any class within $[0,x]$ has priority over every class
within $(x,1]$.   In  the continuum limit model, priority
classes are indexed by $[0,1]$ and the above priority rule continues
to hold.    Moreover, we assume that
arrivals are governed by some measurable, locally integrable function
$\lambda:\R_+\times[0,1]\to\R_+$, where $\la(t,x)dtdx$ can be regarded
as the quantity of arrivals during the time interval $[t,t+dt]$,
into classes within the interval $[x,x+dx]$.
Then, we can define the cumulative arrival stream for the fluid model,
$\al$, to be
\[
\al_t[0,x]=\int_{[0,t]\times[0,x]}\lambda(s,y)dsdy,\qquad
t\in\R_+,x\in[0,1].
\]
Setting $\al_t[0,x]=\al_t[0,1]$ for all $x>1$, we obtain a
well-defined path $\al\in\D_\calM^\up$.
We also assume, as before, that we are also given a function $\mu \in
\D_\R^\up$, where $\mu(t)$ represents the maximal amount of mass a
server could process in the interval $[0,t]$.

We now show that, just as a finite number of coupled
SMs on the half-line were useful for describing the solution to the
$K$-class priority model, the limiting continuum priority fluid model
is naturally described by the \MVSM.
For $t \geq 0$, let $\xi_t$ and $\beta_t$ be  measures on $\R_+$, where
$\xi_t[x,x+dx]$ denotes the quantity of jobs with priority $[x,x+dx]$
that at time $t$ are in the queue, and  $\beta_t[x,x+dx]$ represents the quantity of jobs from classes in $[x,x+dx]$
that have been served by time $t$ and let
$\io(t)$ be a real-valued function that represents the cumulative idleness time of the server in the interval
$[0,t]$. Comparing the description of the continuum-priority model  with Definition
\ref{def-IDSM} of the \MVSM $\Th$,
it is not hard to arrive at the following fluid model equation for the
continuum priority model:
\begin{equation}\label{60}
(\xi,\beta,\io)=\Th(\al,\mu).
\end{equation}
Thus,  in this case, the fluid model is fully described by specifying the data $(\al,\mu)$
and considering equation \eqref{60}.

\subsection{FIFO and LIFO}\label{sec32}

We now  briefly introduce two other well-known single-server queueing models that can
also be described  in terms
of the \MVSM and its close relatives.
Here, we assume we are given  a measurable function $\la:\R_+\to\R_+$,
where $\la(t)dt$ represents the arrivals
during the time interval $(t,t+dt)$, and the server prioritizes
jobs   in the queue in the order of their arrival (FIFO) or in reverse order (LIFO).
We thus let
\[
\al_t[0,x]=\int_{[0,t\w x]}\la(s)ds,\quad t\in\R_+,\, x\in\R_+.
\]
For the FIFO discipline, the same logic as earlier then yields the equation
\begin{equation}\label{61}
(\xi,\beta,\io)=\Th(\al,\mu).
\end{equation}
For the case of LIFO,
one has to redefine $\Th$  by performing inversion with
respect to the $x$ variable.
Specifically, suppose we consider a modified version of  Definition
\ref{defi-1}, in which items 3 and 4 are the same as before,  but
items 1 and 2 are modified as follows: for $x>0$,
\\
1'. $\xi(x,\iy)=\al(x,\iy)-\mu+\beta[0,x]+\io$,\\
2'. $\xi(x,\iy)=0\quad d\beta[0,x)$-{\it a.e.}\\
Analogous to the \MVSM, it can be shown that there exists a unique map
$\Th^\prime$  that satisfies items
1',2',3 and 4 and the LIFO model dynamics would then be captured by the
equation \eqref{61}, but with $\Th$ replaced by $\Th^\prime$.

\section{Fluid models}\label{sec3}

In this section, we present  fluid models of three classes of queueing models
in which service is  prioritized according to a continuous parameter.
In the case of the EDF policy, which is considered in
Section \ref{sec34}, the continuous parameter is the
 customer's deadline, while for the
 SJF and SRPT policies considered in Section \ref{sec33}, it is the remaining processing
 time.
In each case, we include some heuristic discussion to provide
intuition into the form of the fluid model equations and show that it can be
represented in terms of the \MVSM $\Th$; rigorous convergence
 of a sequence of scaled stochastic models to the fluid model is established in Section
 \ref{D-edf} (see Theorem \ref{th2}) for EDF and Section \ref{D-sjf} (see Theorems
 \ref{T-SJF} and \ref{T-SRPT}) for SJF and SRPT.

\subsection{Earliest-Deadline-First Fluid Model}
\label{sec34}

Section \ref{subsub-edfdesc} introduces the state descriptors of the
non-preemptive hard EDF fluid model described in the introduction, and
the associated fluid model equations (the corresponding stochastic model is
described in Section \ref{subs-edfmodel}).
 Section \ref{subsub-minimalfluid} and Section \ref{subsub-altfluid} provide two
 alternative  formulations of the
fluid model equations, which are shown to be equivalent in Section
\ref{subsub-equivfluid} under additional assumptions on the data.

\subsubsection{Description of the EDF Fluid Model}
\label{subsub-edfdesc}

We now consider the non-preemptive soft and hard EDF models described in the
introduction, in which jobs arrive at a buffer that has infinite room
and declare their deadlines, which represents the time by which the
job should enter service, on arrival.
 In addition, customers may be present initially, that
 is at time zero, and their deadlines are assumed to be known.
The server can serve at most one customer at a time and, when it
becomes available,  chooses in a non pre-emptive
fashion to serve
the job with the least deadline among those that are still in the
system.  (Ties may be assumed to be broken by giving priority to the job with the
earlier arrival time, although the details of this mechanism are not relevant for
the fluid model.) In particular,  the server never idles when there are customers
 in the system.  In the soft EDF model,  jobs wait to be served even
after their deadline has elapsed, whereas in the hard EDF model, a job that does not start service
prior to its deadline leaves the system. Jobs do not renege
while being served.
We will use the term {\it departure} to  refer to
jobs that leave the system on  completion of service and the term
{\it reneging} to refer to jobs that exit the system on reaching their deadline without starting service.
In a fluid model, given a Borel set $A \subset \R_+$,  we let $\almod_t(A)$ denote the mass of jobs that have arrived
up to time $t$ with deadlines in the set $A$.  It is worth emphasizing
that here, we consider  {\it absolute} deadlines, as opposed to some other
works (e.g. \cite{Doy-Leh-Shre, kruk-lehoc-ram-shre}), which  consider {\it relative} deadlines,
also referred to as lead times, which
are defined as the difference between the  deadline and the
current time.    In other words, in our system the deadline of a
job does not change with time and,  under the hard EDF policy, a job with
deadline $x$ reneges at time $x$ if it did not enter service earlier; this is in
contrast to relative deadlines, which decrease with time, and if a
job has a relative deadline $x$ at time $t$, then it would renege
at time $t+x$ if it does not enter service before that time.
Note that the absolute deadline of a job  coincides with its relative deadline only
at the time the job arrives to the system.
Here,  and in what follows, `deadline' will be used to mean
`absolute deadline'.
It is common to assume that these deadlines follow a fixed
distribution.  In this case, the fluid arrival stream $\almod$ has a specific
form; see \eqref{11} of Assumption \ref{as-add}.
However, in this section we allow $\almod$ to be  a generic member of
$\D_\calM^\up$.
We also let $\xi_{0-} \in \calM$ represent the
distribution of mass of deadlines corresponding to customers that arrived before time $0$,
and let $\alpha := \xi_{0-} + \almod$.  To complete the specification of the model data,
we assume that $\mu \in \D_\R^\up$, where  $\mu(t)$ represents
the mass the server can potentially process in time
$[0,t]$. We will refer to $(\al,\mu)$ as  the data for
the fluid model.

We now introduce the quantities that describe the fluid model for this
system. Given a measurable set $A$ in $\R_+$ and $t \geq  0$, let $\xi_t(A)$ represent the mass of jobs in the buffer at time $t$
that have deadline in the set $A$, and let $\io(t)$ represent the total
amount of unused potential service in $[0,t]$ due to server
idleness.
The quantity   $\beta$ has a slightly different interpretation.
Specifically, the quantity $\beta_t(A)$ represents the mass of jobs
with deadlines in $A$ that by time $t$ have left the queue: either by transferring to
the server or (in the hard model) by reneging.   In analogy with the
continuum priority model described in Section \ref{sec31}, the state
process is then $(\xi, \beta, \io)$ and thus, the soft EDF fluid model
is then concisely described by the equation \ref{60}.    On the other hand,
to fully describe the state of the hard EDF fluid model we need to introduce one additional function,
$\rho\in\D_\R^\up$.
For $t > 0$, the quantity $\rho(t)$ represents the total amount of mass that has left the system
by reneging in the interval  $[0,t]$.
The ({\em a priori}) unknown system state
 descriptor or fluid model solution for the hard EDF policy is then represented by $(\xi,\beta,\io,\rho)$.

From the description of the policy and the definition of the \MVSM
$\Th$, it is reasonable to expect  that the
state $(\xi,\beta,\io,\rho)$ should satisfy  the following set of equations:

\begin{equation}
  \label{30}
  \begin{cases}
  {\it (i)} & (\xi,\beta,\io)=\Th(\al,\mu+\rho),\\
  {\it (ii)} & \xi_t[0,t)=0,\quad \text{for every $t>0$,}
  \end{cases}
\end{equation}
where property (ii) captures the condition that any job with a
deadline strictly less than $t$ would have reneged from the system by time
$t$.
However, these equations are not sufficient to uniquely characterize the
model; in particular, they put no constraints on  $\rho$.
 We  now identify two  additional conditions that we would
expect $\rho$ to satisfy given the description of the policy.   The first one is a minimality condition, described in Section
\ref{subsub-minimalfluid}, and shown to be satisfied by the hard EDF policy in Corollary \ref{C4.3}.
  In particular, Corollary \ref{C4.3} establishes a new optimality result
for the (hard) EDF policy, showing that it  leads to the least amount of reneged work in the
system amongst a large class of policies.
The second condition, introduced in Section \ref{subsub-altfluid}, imposes the
 requirement that $\rho$ increases only on the set of times $t$ at which the
left end of the support of $\xi_t$ equals $t$.
This captures the property that, under the  EDF policy described above, if a job reneges, it
does so exactly at the time of its deadline.
 In Section \ref{subsub-equivfluid} we show
that, under natural additional assumptions on the data, the two formulations
are equivalent.

\subsubsection{A  Minimal Solution}
\label{subsub-minimalfluid}

We introduce the notion of a minimal
solution of \eqref{30}, and show that it is well defined.

\begin{definition}[Minimal Solution]
\label{def-minimal}
A solution $(\xi,\beta,\io,\rho)$ of \eqref{30} is said to be {\it minimal} if
for every solution $(\xi^1,\beta^1,\io^1,\rho^1)$ of \eqref{30},
 one has $\rho \le \rho^1$, that is, $\rho(t) \le\rho^1(t)$ for every $t
 \geq 0$.
\end{definition}

\begin{proposition}\label{prop1}
Given $(\al,\mu)\in\D_\calM^\up\times\D_\R^\up$,
there exists a unique  minimal solution $(\xi,\beta,\io,\rho)\in\D_\calM\times
\D_\calM^\up\times\D_\R^\up\times\D_\R^\up$ of \eqref{30}.
\end{proposition}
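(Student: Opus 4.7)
Our plan is to recast \eqref{30} as a scalar problem in the single unknown $\rho\in\D_\R^\up$ and then construct a minimal admissible $\rho^*$. Fix any $\rho\in\D_\R^\up$: by Proposition \ref{prop2.1a} and Lemma \ref{lem-altIDSP}, setting $(\xi,\beta,\iota):=\Th(\alpha,\mu+\rho)$ automatically satisfies item (i) of \eqref{30}, and for every $x,t\ge 0$ one has
\[
\xi_t[0,x]=\Gam_1\big[\alpha[0,x]-\mu-\rho\big](t).
\]
By Lemma \ref{lem-0}(3), $\sup_{s\le t}\alpha_s(y,x]\to 0$ as $y\uparrow x$, and combined with the Lipschitz estimate of Lemma \ref{lem-1d}(2) this allows passage $x\uparrow t$ to yield $\xi_t[0,t)=\Gam_1[\psi_t](t)$, where $\psi_t(s):=\alpha_s[0,t)-\mu(s)-\rho(s)$. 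Thus item (ii) of \eqref{30} is equivalent to the scalar constraint $\Gam_1[\psi_t](t)=0$ for every $t>0$, and a minimal solution of \eqref{30} corresponds to a pointwise minimal $\rho^*$ in the family $\mathcal R$ of $\rho\in\D_\R^\up$ that satisfy this scalar constraint.

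The family $\mathcal R$ is non-empty: with $\bar\rho(t):=\alpha_t[0,\infty)$, which lies in $\D_\R^\up$ by $\alpha\in\Dup$, we have $\alpha[0,\infty)-\mu-\bar\rho=-\mu\le 0$, and since $\mu$ is non-decreasing, formula \eqref{20} gives $\Gam_1[-\mu]\equiv 0$; hence $\xi\equiv 0$ and the constraint is trivially met. Define $\rho^*(t):=\inf_{\rho\in\mathcal R}\rho(t)$, a non-decreasing function bounded above by $\bar\rho(t)$, and let $\hat\rho^*\in\D_\R^\up$ be its right-continuous regularization. The crux is to verify $\hat\rho^*\in\mathcal R$, and the main obstacle is that $\mathcal R$ is not a priori closed under pointwise minima of two elements: the monotonicity statement of Lemma \ref{lem-1d}(1) requires differences of inputs to lie in $\D_\R^\up$, a property not preserved by taking pointwise minima of reneging processes. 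We propose to circumvent this by extracting a sequence $\{\rho^{(n)}\}\subset\mathcal R$ with $\rho^{(n)}(q)\to\hat\rho^*(q)$ on a countable dense set via a diagonal construction, and then applying Proposition \ref{prop2.1b}(1) and Lemma \ref{lem-0} to pass the constraint $\xi^{(n)}_t[0,t)=0$ to the limit triple $\Th(\alpha,\mu+\hat\rho^*)$. An alternative route is a Picard-style iteration: starting from $\rho^{(0)}\equiv 0$ and iteratively adding, at each step, the minimal correction needed to remove the current violations of item (ii) (bounded above throughout by $\bar\rho$), combined with monotone convergence and the continuity of $\Th$, this would yield admissibility and minimality of the limit $\rho^\infty$.

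Uniqueness of the minimal $\rho^*$ is then immediate from Definition \ref{def-minimal}: if $\rho^1,\rho^2$ both yield minimal solutions, then $\rho^1\le\rho^2\le\rho^1$, so $\rho^1=\rho^2$, and Proposition \ref{prop2.1a} forces the associated $(\xi,\beta,\iota)$ to coincide. The genuinely delicate step is the closure of $\mathcal R$ under the inf-type operation; everything else reduces to a direct application of the results of Section \ref{sec-idsp}.
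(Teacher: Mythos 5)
Your overall plan matches the paper: recast the problem as finding a minimal admissible $\rho$, verify non-emptiness via $\bar\rho(t)=\alpha_t[0,\infty)$, and take a pointwise infimum. Uniqueness is handled the same way. The problem is that you stop precisely at the step the paper actually proves, and the two workarounds you propose do not fill the gap.

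The crucial step---showing the pointwise infimum $\bar\rho(t)=\inf_{\rho\in\mathcal{R}}\rho(t)$ is itself admissible---admits a short direct argument that you appear to have missed. Fix $t>0$, $x\in(0,t)$, and set $v(s):=\alpha_s[0,x]-\mu(s)$. For each $\rho\in\mathcal{R}$, the constraint $\Gam_1[v-\rho](t)=0$, read off from the explicit formula \eqref{20}, is equivalent to $v(t)-\rho(t)\le 0$ and $v(t)-\rho(t)\le v(s)-\rho(s)$ for all $s\in[0,t]$. Then
\[
v(t)-\bar\rho(t)=\sup_{\rho\in\mathcal{R}}\bigl(v(t)-\rho(t)\bigr)\le\sup_{\rho\in\mathcal{R}}\bigl(v(s)-\rho(s)\bigr)=v(s)-\bar\rho(s),\qquad s\in[0,t],
\]
and likewise $v(t)-\bar\rho(t)\le 0$, so $\Gam_1[v-\bar\rho](t)=0$. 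This uses nothing beyond \eqref{20}; no lattice closure of $\mathcal{R}$, no approximation. (As a sanity check, the same computation shows $\mathcal{R}$ \emph{is} closed under pointwise minima: the ``obstacle'' you flag, that Lemma \ref{lem-1d}(1) requires differences in $\D_\R^\up$, is a red herring --- that lemma isn't needed here.) A separate point: the pointwise infimum of a family in $\D_\R^\up$ is already right-continuous --- a non-decreasing function with left limits is right-continuous iff it is upper semicontinuous, and infima of u.s.c.\ functions are u.s.c.\ --- so the ``right-continuous regularization'' $\hat\rho^*$ is superfluous (and your worrying whether it might differ from $\rho^*$ suggests you did not realize $\rho^*$ is already in $\D_\R^\up$).

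The two alternatives you sketch do not close the gap. The diagonal-sequence route leans on Proposition \ref{prop2.1b}(1), whose hypothesis requires the limit data to satisfy $\alpha\in\D_{\calM_0}^\up$; Proposition \ref{prop1} is stated for arbitrary $\alpha\in\D_\calM^\up$, so that continuity statement is simply not available here. Even granting it, pointwise convergence of $\rho^{(n)}$ on a countable dense set neither gives Skorokhod convergence of $\mu+\rho^{(n)}$ in general nor lets you pass the constraint $\xi^{(n)}_t[0,t)=0$ to the limit at arbitrary $t$ without controlling continuity points. The Picard iteration is only gestured at (``minimal correction needed to remove the current violations'') with no construction or convergence proof, so it cannot count as a completed argument. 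In short, the scaffolding is right, but the existence proof has a genuine hole that the paper plugs with a simple and elementary observation you should supply.
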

\proof
Uniqueness is an immediate consequence of minimality: if $\rho^1$ and
$\rho^2$ are two minimal solutions
then they must satisfy  $\rho^1\le\rho^2\le\rho^1$ and hence, they
must be equal.

Next, we construct a minimal solution in the form of
the lower envelope of the collection of all solutions.
Fix $(\al,\mu)\in\D_\calM^\up\times\D_\R^\up$.
Let $\mathbb{S}$ denote the collection of all
$\rho\in\mathbb{D}_{\mathbb{R}}^\up$ for which there exists
$(\xi,\beta,\io) \in\D_\calM\times\D_\calM^\up\times\D_\R^\up$ such that
$(\xi,\beta,\io,\rho)$ is a solution of \eqref{30}.
First, note that $\mathbb{S}$ is nonempty. Indeed, let
$\rho_t=\al_t[0,\iy)$. Then, since  $\xi = \Th (\alpha, \mu +
\rho)$,
by \eqref{502+}, $\xi[0,\iy)=\Gam_1[\al[0,\iy)-\mu-\rho]=\Gam_1[-\mu]=0$
where we used the fact that $-\mu$ is decreasing and nonpositive and
Lemma \ref{lem1}(2).
Thus,
$\xi \equiv 0$, and so \eqref{30}(ii) is automatically satisfied.

Now, for $t\in[0,\iy)$, let $\bar
\rho(t):=\inf\{\rho(t):\rho\in\mathbb{S}\}$.   It is not hard to
verify that the infimum of a collection of non-negative, non-decreasing and
right-continuous functions also possesses the same properties.
Indeed, this can be verified directly or
deduced from the fact that a non-decreasing function with left limits
 is right-continuous if and only if it is upper semicontinuous,
and the infimum of upper semicontinuous functions (resp.\
non-decreasing) is upper semicontinuous (resp.\ non-decreasing).
Thus, we have shown that $\bar \rho \in \D_{\R}^\up$.

By definition,
for every
$\rho \in \mathbb{S}$, we have     $\bar{\rho} (t) \leq \rho (t)$ for
all $t \geq 0$.
Now, set $(\bar\xi,\bar\beta,\bar\io):=\Th(\al,\mu+\bar\rho)$.
Then, to show that $(\bar \xi, \bar\beta, \bar \io, \bar \rho)$ is a
minimal solution of \eqref{30}, it only remains to prove
that $\bar\xi_t[0,t)=0$ for every $t>0$.
Fix $t  > 0$ and  $x \in (0,t)$.   Let  $(\xi,\beta,\io,\rho)$ be a
solution to \eqref{30}.  Then, by Lemma \ref{lem-altIDSP}  we have $\xi_t[0,x] = \Gam_1
[\upsilon - \rho](t)$, where for notational convenience we set
$\upsilon (s) := \alpha_s[0,x] - \mu(s)$ for $s \geq 0$.   Moreover,
since $x < t$, by \eqref{30}(ii) we have $\xi_t[0,x] = 0$.
In turn, by the explicit form of $\Gam_1$ given in \eqref{20} it follows that
$v(t) - \rho(t) = \inf_{s \in [0,t]} (v(s) - \rho(s))$.   Hence, we have for $s \in [0,t]$,
\[
v(t) - \bar{\rho}(t) = \sup_{ \rho \in \mathbb{S}} (v(t) - \rho(t))
            \leq  \sup_{\rho \in \mathbb{S}} \ (v(s) - \rho(s))
             =  v(s) - \bar{\rho}(s).
\]
This implies that $v(t) - \bar{\rho}(t) = \inf_{s \in [0,t]} (v(s) - \bar{\rho}(s)) \wedge 0$,
and so the definition of $\Gam_1$ in \eqref{20} shows that $\Gam (v - \bar{\rho})(t)  = 0$.
Since this holds for every  $x \in (0,t)$,
  $\bar{\xi}_t[0,t) = 0$.  This completes the
proof that $(\bar{\xi}, \bar{\beta}, \bar{\iota}, \bar{\rho})$ is a minimal
solution.
\qed

As a first application of Proposition~\ref{prop1}, we obtain an intuitive monotonicity
property of the reneging count $\rho$ with respect to the data $(\al, \mu)$.
It is closely related to a result
 obtained in \cite{Moyal} for the $G/M/1+G$ queue in a setting of a
stochastic recursive sequence. Roughly speaking, it states that reneging is monotone increasing
[resp., decreasing] w.r.t.\ the cumulative arrival and service function.
The ordering in \cite{Moyal} is obtained with respect to the patience time distribution function.

\begin{corollary}\label{C4.3}
Let $(\al^i, \mu^i)\in \D^{\up}_{\calM}\times\D^\up_{\R_+}, i=1,2$ be such that
$(\al^1_t[0, x]-\mu^1_t)
- (\al^2_t[0, x]-\mu^2_t)$ is non-negative and non-decreasing in $t$ for every $x\in\R_+$.
Denote by
$(\xi^i,\beta^i,\io^i,\rho^i)$ the unique minimal solution of \eqref{30}
corresponding to $(\al^i, \mu^i), i=1,2$.
Then we have $\rho^1\geq \rho^2$.
\end{corollary}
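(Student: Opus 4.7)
The plan is to show $\rho^2\le\rho^1$ by exhibiting $\rho^1$ as the reneging component of some solution of \eqref{30} for the data $(\al^2,\mu^2)$, and then invoking the minimality of $\rho^2$ (see Definition \ref{def-minimal} and Proposition \ref{prop1}).  Concretely, I would define $(\tilde\xi,\tilde\beta,\tilde\io):=\Th(\al^2,\mu^2+\rho^1)$, so that item (i) of \eqref{30} for the data $(\al^2,\mu^2)$ holds automatically with fourth component $\rho^1$.  The task then reduces to verifying item (ii), namely $\tilde\xi_t[0,t)=0$ for every $t>0$.

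Fix $t>0$ and $x\in(0,t)$.  By Lemma \ref{lem-altIDSP},
\begin{equation*}
\tilde\xi_t[0,x]=\Gam_1[\al^2[0,x]-\mu^2-\rho^1](t),\qquad \xi^1_t[0,x]=\Gam_1[\al^1[0,x]-\mu^1-\rho^1](t).
\end{equation*}
The difference of the two driving paths equals $(\al^1[0,x]-\mu^1)-(\al^2[0,x]-\mu^2)$, which by hypothesis is non-negative and non-decreasing in $t$, hence an element of $\D_\R^\up$.  The monotonicity property of $\Gam$ (Lemma \ref{lem-1d}(1)) therefore gives $\tilde\xi_t[0,x]\le\xi^1_t[0,x]$.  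Since $x<t$ and $(\xi^1,\beta^1,\io^1,\rho^1)$ satisfies item (ii) of \eqref{30} for the data $(\al^1,\mu^1)$, we have $\xi^1_t[0,x]=0$; combined with $\tilde\xi_t\ge0$, this forces $\tilde\xi_t[0,x]=0$.  Letting $x\uparrow t$ and using continuity of $\tilde\xi_t$ from below then yields $\tilde\xi_t[0,t)=0$, as required.

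Consequently, $(\tilde\xi,\tilde\beta,\tilde\io,\rho^1)$ is a solution of \eqref{30} for the data $(\al^2,\mu^2)$, and the minimality of $\rho^2$ immediately gives $\rho^2\le\rho^1$ on $[0,\iy)$.  The only delicate point—and really the main obstacle—is recognising that the hypothesis is tailored precisely to enable the Skorokhod-map monotonicity after subtracting the common $\rho^1$ from both driving paths; once this alignment is in place, the remainder of the argument is forced by Lemma \ref{lem-altIDSP} and the infimum construction already carried out in Proposition \ref{prop1}.
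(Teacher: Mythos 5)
Your proof is correct and follows essentially the same approach as the paper's: define $(\tilde\xi,\tilde\beta,\tilde\io):=\Th(\al^2,\mu^2+\rho^1)$, use Lemma \ref{lem-altIDSP} to reduce to a comparison of $\Gam_1$ applied to two driving paths whose difference equals the hypothesized non-decreasing quantity, invoke Lemma \ref{lem-1d}(1) to get $\tilde\xi\le\xi^1$, and then conclude by minimality of $\rho^2$. Your write-up is slightly more explicit (spelling out the limit $x\uparrow t$ and the $\sigma$-additivity of the measure $\tilde\xi_t$), but the argument is identical in substance.
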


\begin{proof}
 Let
$(\tilde{\xi}, \tilde{\beta}, \tilde{\io})=\Th(\al^2, \mu^2+\rho^1)$.
Now, for every $x$, $\xi^1[0,x]=\Gam_1[\al^1[0,x]-\mu^1-\rho^1]$, while
$\tilde\xi[0,x]=\Gam_1[\al^2[0,x]-\mu^2-\rho^1]$, and therefore
using Lemma~\ref{lem-1d}(1), $\tilde\xi\le\xi^1$. As a result, $\tilde\xi_t[0,t)=0$
must hold for all $t>0$. This shows that
$(\tilde{\xi}, \tilde{\beta}, \tilde{\io},\rho_1)$ is a solution of \eqref{30}
corresponding to $(\al^2,\mu^2)$.
Thus by minimality of $(\xi^2,\beta^2,\io^2,\rho^2)$, we obtain $\rho^1\geq \rho^2$.
\hfill $\Box$
\end{proof}

\subsubsection{Hard EDF Fluid Model  Equations}
\label{subsub-altfluid}

We now present the fluid model equations for the hard EDF policy.

\begin{equation}
  \label{32}
  \begin{cases}
    {\it (i)} & (\xi,\beta,\io)=\Th(\al,\mu+\rho),\\
    {\it (ii)} & \xi_t[0,t)=0, \quad \text{for every } t>0,\\
    {\it (iii)} & \sig (t)=t\quad d\rho\text{-a.e., where for $t\ge0$, }
    \sig (t)=\min  \supp[ \xi_t].
\end{cases}
\end{equation}

For property  \eqref{32}(iii) to be well defined, $\sigma$ needs to be
a measurable function. The next lemma establishes this property.

\begin{lemma}\label{lem2}
 Given $(\alpha, \mu) \in \Dup \times \D_{\R}^\up$, suppose $(\xi,
 \beta, \iota)  = \Th(\al, \mu+\rho)$ for some $\rho \in
 \D_{\R}^\up$.
Then, for every $t \geq 0$, the map  $a \mapsto \xi_t[0,t+a]$ from $[0,\infty)$ to $[0,\infty)$ is right
 continuous, and for every $a \geq 0$, the map $t  \mapsto \xi_t[0,t+a]$ is right
 continuous.  Moreover, if $\xi_t[0,t) = 0$ and $\sig(t) = \min \supp [\xi_t]$, $t \geq
 0$, then the mapping $\sig:[0,\infty) \mapsto \R
 \cup \{\infty\}$ is measurable.
\end{lemma}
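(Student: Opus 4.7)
The plan is to prove the three assertions in the stated order, using throughout that by Proposition \ref{prop2.1a}, $\xi \in \Dup$, so that by Lemma \ref{lem-0}(2), for every $0 \leq x < y$, the maps $s \mapsto \xi_s[0,x]$ and $s \mapsto \xi_s(x,y]$ lie in $\D_\R^\up$, and in particular are right continuous.

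For the first assertion (right continuity of $a \mapsto \xi_t[0,t+a]$), I would simply note that $\xi_t$ is a finite non-negative Borel measure on $\R_+$, so its distribution function $y \mapsto \xi_t[0,y]$ is right continuous by continuity from above of finite measures; the map $a \mapsto \xi_t[0,t+a]$ is merely a translate of this function.

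For the second assertion, I would fix $a \geq 0$ and a sequence $t_n \downarrow t$, and decompose
\begin{equation*}
\xi_{t_n}[0,t_n+a] - \xi_t[0,t+a] = \big(\xi_{t_n}[0,t+a] - \xi_t[0,t+a]\big) + \xi_{t_n}(t+a, t_n+a].
\end{equation*}
The first bracket tends to zero because $s \mapsto \xi_s[0,t+a] \in \D_\R^\up$ is right continuous at $t$. For the second term, I would pick any $T$ with $t_n \leq T$ for all $n$, use the fact (from Lemma \ref{lem-0}(2)) that $s \mapsto \xi_s(t+a, t_n+a]$ is non-decreasing to bound $\xi_{t_n}(t+a, t_n+a] \leq \xi_T(t+a, t_n+a]$, and then conclude by right continuity of the CDF of the finite measure $\xi_T$ that $\xi_T[0, t_n+a] - \xi_T[0, t+a] \to 0$.

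For the measurability of $\sigma$, I would use that $\xi_t[0,t) = 0$ forces $\supp[\xi_t] \subseteq [t,\infty)$, so $\sigma(t) \in [t, \infty]$ with the convention $\min \emptyset = \infty$. For any $s \in [0,\infty)$, the identity
\begin{equation*}
\{t \geq 0 : \sigma(t) \geq s\} = \{t \geq 0 : \xi_t[0,s) = 0\}
\end{equation*}
holds by definition of the support. Taking any rational sequence $s_k \uparrow s$ with $s_k < s$, monotone convergence yields $\xi_t[0,s) = \lim_k \xi_t[0, s_k]$, so
\begin{equation*}
\{t \geq 0 : \sigma(t) \geq s\} = \bigcap_k \{t \geq 0 : \xi_t[0,s_k] = 0\}.
\end{equation*}
Each set on the right is Borel measurable since $t \mapsto \xi_t[0, s_k] \in \D_\R^\up$ is right continuous and hence Borel measurable. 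This yields measurability of $\sigma$ as a map into $\R \cup \{\infty\}$, completing the proof. The main point to handle carefully is the telescoping in the second assertion, where one must be cautious that both endpoints of the interval $(t+a, t_n+a]$ are varying with $n$; this is precisely where the monotonicity property from $\xi \in \Dup$ is essential to reduce the problem to continuity of a single fixed finite measure.
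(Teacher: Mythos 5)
Your argument has a genuine gap: you repeatedly invoke ``$\xi \in \Dup$'' and cite Proposition \ref{prop2.1a}, but that proposition only places $\xi$ in $\D_\calM$, not $\Dup$; the monotone component of the output is $\beta$, not $\xi$. Indeed $\xi$ is the queue content, which fluctuates in $t$, so $s \mapsto \xi_s(x,y]$ is in general \emph{not} non-decreasing. This false premise is used at two points. For the first bracket in your decomposition, the conclusion that $s \mapsto \xi_s[0,t+a]$ is right continuous is in fact true (it follows from $\xi_\cdot[0,x] = \Gam_1[\alpha[0,x]-\mu-\rho]$ via Lemma \ref{lem-altIDSP}, since $\Gam_1$ produces a c\`adl\`ag path), but the reason you give is not valid. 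More seriously, for the second term you use the monotonicity of $s \mapsto \xi_s(t+a,t_n+a]$ to bound $\xi_{t_n}(t+a,t_n+a] \le \xi_T(t+a,t_n+a]$, and this step fails because the map is not monotone; that bound simply does not hold in general.

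The fix for the second term is to replace $\xi$ by $\alpha$: by \eqref{16}/\eqref{ac} one has $\xi_s \le \alpha_s$ as measures, hence $\xi_{t_n}(t+a,t_n+a] \le \alpha_{t_n}(t+a,t_n+a] \le \alpha_T(t+a,t_n+a]$, where the last step is now legitimate because $\alpha \in \Dup$, and then right continuity of the CDF of the single finite measure $\alpha_T$ finishes the bound. With this correction your decomposition is a workable, slightly different route from the paper's (which instead controls $|\xi_{t_n}[0,t_n+a]-\xi_t[0,t_n+a]|$ uniformly in the spatial variable using the Lipschitz property of $\Gam_1$ and the monotonicity of $\alpha[0,\infty)$, $\mu$, $\rho$). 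Your measurability argument via the superlevel sets $\{\sigma \ge s\} = \bigcap_k\{\xi_\cdot[0,s_k]=0\}$ is also a correct and somewhat more economical alternative to the paper's identity $\{t: \sigma(t)<t+u\}=\{t:\xi_t[0,t+u]>0\}$ — it only needs right continuity of $t\mapsto\xi_t[0,x]$ for fixed $x$ rather than the joint right continuity established in the lemma — but as written it again rests on the incorrect assertion that $\xi_\cdot[0,s_k] \in \D_\R^\up$; the correct justification is that it lies in $\D_\R$.
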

\proof  For fixed $t \in [0,\infty)$, the right continuity of $a \mapsto
\xi_t[0,t+a]$ follows from the fact that $\xi_t$ is a finite
measure. For fixed $a \in [0,\infty)$, to show the right continuity of $t \mapsto
\xi_t[0,t+a]$, fix any sequence $\{t_n\}$ in $[0,\infty)$ such that
$t_n \downarrow t$.  Then, by Lemma \ref{lem-altIDSP}, $\xi[0,x] = \Gam_1[
\alpha[0,x] - \mu - \rho]$, the explicit expression for $\Gam_1$
in \eqref{20}, and the fact that $\alpha[0,x], \mu, \rho$ are
non-decreasing, we have  for  $n \in \mathbb{N}$,
\[
\begin{array}{l}
\displaystyle |\xi_{t_n}[0,t_n+a]-\xi_t[0,t+a]|\\
\qquad \qquad \displaystyle = |\xi_{t_n}[0,t_n+a]-\xi_t[0,t_n+a]| + |\xi_t[0,t_n+a]-\xi_t[0,t+a]|\\[5pt]
\qquad \qquad \displaystyle \leq \alpha_{t_n}[0,\iy)-\alpha_t[0,\iy)+
\mu (t_n)  + \rho (t_n) -\mu(t) - \rho(t) +
|\xi_t[0,t_n+a]-\xi_t[0,t+a]|.
\end{array}
\]
Sending  $n \rightarrow \infty$, the right-hand side goes to zero
because the functions
$\alpha[0,\iy)$, $\mu$ and $\rho$ are   right continuous, and $\xi_t$
is a measure.
 This shows that $t \mapsto
\xi_t[0,t+a]$ is right continuous.  In turn, this right continuity
together with the relations
$$\{t:\sig(t)<t+u\}=\{t:\xi_t[0,t+u]>0\} \mbox{ and }
\{t:\sig(t)=t\}=\bigcap_n\{t:\xi_t[0,t+n^{-1}]>0\},$$
where the latter equality  holds because $\xi_t[0,t) = 0$,
 implies the  measurability of $t \mapsto \sig (t)$.
\qed

\skp

We now show that under  mild additional assumptions on the data
$(\alpha, \mu)$,  the fluid model equations \eqref{32} have a unique
solution that coincides with the minimal solution of \eqref{30}.

\begin{assumption}
\label{as-add}
Suppose the following two properties hold: \\
(i) $\al = \almod  + \initxi$, where  $\initxi\in\calM_0$, and
there exists a measurable function $\la:\R_+\to\R_+$ and
$\nu\in\calM$ such that $\almod \in \C_{\calM_0}^\up$ satisfies
\begin{equation}
  \label{11}
  \almod_t[0,x]=\int_0^t\ind_{\{x\geq s\}}\nu_s[0, x-s]ds,\quad t\ge0,x\ge0,
\end{equation}
where $\nu_s := \la(s)\nu$, $s \geq 0$, and
$\lim_{x \downarrow 0} \sup_{s \in [0,t]}
  \nu_s[0,x] = 0$ for every $t \geq 0$; \\
(ii) there exist $\mu^0\in\C_\R^\up$ and
a non-negative measurable function $m$ on $[0,\infty)$  which satisfies
$\inf_{s \in [0,t]} m(s) > 0$ for every $t \in [0,\infty)$ (i.e.,  $m$
is locally bounded away from zero), such that
\begin{equation}
  \label{12}
  \mu (t)=\mu^0(t)+\int_0^t m(s)ds, \quad t \geq 0.
\end{equation}
\end{assumption}

\begin{remark}
{\em
As mentioned earlier, the notation $\initxi$ in Assumption \ref{as-add} represents the
 state of the queue
just prior to zero.   The notation $\initxi$ is used to emphasize that
it need not  coincide with $\xi_0$, which represents the state of
the queue at time zero.  In particular, the measures $\xi_0$
and $\initxi$ may differ when $\mu$ has a jump at time zero, that is,
when $\mu(0) > 0$.
}
\end{remark}

\begin{remark}
{\em  When Assumption \ref{as-add} holds, we will say that
the data $(\al, \mu)$ is associated with the primitives
$(\initxi, \lambda, \nu, \mu^0, m)$.
  It  is immediate from the expressions \eqref{11} and
\eqref{12} and the stated properties  of the primitives that $(\al, \mu)$ lies in $(\C_{\calM_0}^\up,
\C_{\R}^\up)$.
}
\end{remark}

\begin{remark}
{\em As a special case of Corollary~\ref{C4.3}, that fits the structure of the
monotonicity result from \cite{Moyal}, if $\al^i$ admits a
form as in \eqref{11} with $\nu^i_s=\lambda(s)\nu^i, i=1,2,$ and $\nu^1[0, x]\geq \nu^2[0, x]$ for all
$x\in\R_+$, and $\mu^2-\mu^1$ is non-decreasing with $\mu^2(0)-\mu^1(0)\geq 0$, then we have $\rho^1\geq \rho^2$.}
\end{remark}

We now establish a ramification of Assumption \ref{as-add}
that will be used in the next section.

\begin{lemma}
\label{lem-add}
If  $(\alpha, \mu)$ satisfy Assumption \ref{as-add}   then for any
$\tau^\prime < \infty$,
there exists $\delta_0 \in (0,1)$ such that for any $x
\in [0,\delta_0]$ and $t_0^\prime \in [\tau^\prime,\tau^\prime + \delta_0]$, the
function $t \mapsto \alpha_{t} [0,t_0^\prime + x] - \mu(t)$ is non-increasing on
$[\tau^\prime, \tau^\prime +2]$.
\end{lemma}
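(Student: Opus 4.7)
The plan is to exploit the special structure of $\alpha$ and $\mu$ under Assumption \ref{as-add} to directly estimate the increments of $t\mapsto \alpha_t[0,t_0'+x]-\mu(t)$ on $[\tau',\tau'+2]$ and show they are non-positive once $\delta_0$ is small enough.

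First, since $\xi_{0-}$ does not depend on $t$, for any $\tau'\le a\le b\le \tau'+2$ the $\alpha$-increment equals the $\hat\alpha$-increment, so by \eqref{11}
\[
 \alpha_b[0,t_0'+x]-\alpha_a[0,t_0'+x]
 =\int_a^b \ind_{\{t_0'+x\ge s\}}\,\nu_s[0,t_0'+x-s]\,ds.
\]
On the other hand, the monotonicity of $\mu^0$ together with \eqref{12} gives
\[
 \mu(b)-\mu(a)\ \ge\ \int_a^b m(s)\,ds\ \ge\ m_0(b-a),
\]
where $m_0:=\inf_{s\in[0,\tau'+2]}m(s)>0$ by the hypothesis on $m$. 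Thus it suffices to bound the integrand of the $\alpha$-increment by a constant strictly less than $m_0$ uniformly over $s\in[a,b]$.

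Next I would restrict to $t_0'\in[\tau',\tau'+\delta_0]$, $x\in[0,\delta_0]$, and $s\in[\tau',\tau'+2]$. Then whenever the indicator $\ind_{\{t_0'+x\ge s\}}$ is non-zero, one has $s\le t_0'+x\le \tau'+2\delta_0$, so $t_0'+x-s\in[0,2\delta_0]$ and therefore
\[
 \ind_{\{t_0'+x\ge s\}}\nu_s[0,t_0'+x-s]\ \le\ \sup_{s\in[0,\tau'+2]}\nu_s[0,2\delta_0].
\]
The assumption $\lim_{y\downarrow 0}\sup_{s\in[0,\tau'+2]}\nu_s[0,y]=0$ then lets me choose $\delta_0\in(0,1)$ so small that this supremum is strictly smaller than $m_0$. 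Combining the two bounds yields, for all $\tau'\le a\le b\le \tau'+2$,
\[
 \alpha_b[0,t_0'+x]-\alpha_a[0,t_0'+x]\ <\ m_0(b-a)\ \le\ \mu(b)-\mu(a),
\]
which is precisely the statement that $t\mapsto \alpha_t[0,t_0'+x]-\mu(t)$ is non-increasing on $[\tau',\tau'+2]$.

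There is no serious obstacle; the only mild care required is the bookkeeping on the range of $s$ where the indicator is non-zero, which ensures $t_0'+x-s$ lives in the small interval $[0,2\delta_0]$ on which the assumed vanishing of $\nu_s[0,\cdot]$ can be invoked uniformly in $s\in[0,\tau'+2]$.
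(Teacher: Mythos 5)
Your proof is correct and follows essentially the same route as the paper: use the representation \eqref{11} to express the $\alpha$-increment as an integral of $\nu_s[0,\cdot]$ over a small spatial window, use \eqref{12} and the monotonicity of $\mu^0$ to lower-bound the $\mu$-increment by $m_0(b-a)$ with $m_0=\inf_{[0,\tau'+2]}m>0$, and then pick $\delta_0$ via the vanishing condition on $\sup_s \nu_s[0,\cdot]$ so that the integrand is uniformly below $m_0$. (Only a cosmetic note: your final chain of inequalities is written with a strict ``$<$'', which fails when $a=b$; the intended conclusion only needs ``$\leq$'', which holds throughout.)
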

\proof  Given any $\tau^\prime <
\infty$,   Assumption \ref{as-add}(ii) implies that  $c_0 :=
 \inf_{u \in [0,\tau^\prime+2]} m(u)$ is strictly positive.
Assumption \ref{as-add}(i) then implies that there
exists $\delta_0 \in (0,1)$ sufficiently small so that $\sup_{u \in  [0,\tau^\prime+2]} \nu_u [0,2 \delta_0] < c_0$.
Combining this with  the expressions in \eqref{11} and
\eqref{12} we then see that  for any $t \geq 0$ and $x \in
[0,\delta_0]$,
\begin{align*}
& \al_{\tau^\prime +t}[0, t_0^\prime +x]-\al_{\tau^\prime} [0,
t_0^\prime +x] + \mu( \tau^\prime + t) - \mu (\tau^\prime)
\\
&\qquad = \int_{\tau^\prime}^{\tau^\prime +t}\ind_{\{t_0^\prime +x\geq s\}}\nu_s[0,
t_0^\prime +x-s]ds -\int_{\tau^\prime}^{\tau^\prime +t}
m(u) \, du-\mu^0 (\tau^\prime + t)+\mu^0(\tau^\prime),
\end{align*}
and for $s\geq \tau^\prime$,
$$\ind_{\{t_0^\prime+x\geq s\}}\nu_s[0, t_0^\prime+x-s]\leq
\ind_{\{t_0^\prime+x\geq s\}}\nu_s[0, t_0^\prime +x- \tau^\prime]
\leq \ind_{\{t_0^\prime+x\geq s\}}\nu_s[0, 2\delta_0]< c_0,$$
where the last inequality follows because $t_0^\prime + x <
\tau^\prime + 2\delta_0 < \tau^\prime  + 2$.
The last two assertions, together with the definition of $c_0$ and the
fact that  $\mu^0$ is  non-decreasing, show that for any $x \in
[0,\delta_0]$, $t \mapsto \alpha_{t}[0,t_0^\prime + x]  - \mu(t)$ is non-increasing on
$[\tau^\prime,\tau^\prime +2]$.
\qed

\skp

We now state the main result of this section, whose proof is given
in Section \ref{subsub-equivfluid}.

\begin{theorem}\label{th1}
Suppose $(\al, \mu)$ satisfies Assumption \ref{as-add}.
Then the minimal solution $(\xi,\beta,\io,\rho)$
of \eqref{30} is the unique solution of  \eqref{32}  in
$\D_\calM\times\D_\calM^\up\times\D_\R^\up\times\D_\R^\up$.
\end{theorem}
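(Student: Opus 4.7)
The statement combines existence and uniqueness. Since Proposition \ref{prop1} already produces the minimal solution $(\bar\xi,\bar\beta,\bar\iota,\bar\rho)$ of \eqref{30}, and the first two conditions of \eqref{32} are identical to those of \eqref{30}, my plan is: (I) verify that the minimal solution automatically satisfies the additional condition \eqref{32}(iii); (II) show that any solution of \eqref{32} must in fact be minimal, so uniqueness reduces to coincidence with $\bar\rho$.

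For Part (I), let $\bar\sigma(t) := \min\supp[\bar\xi_t]$, which is $\ge t$ by \eqref{30}(ii), and suppose for contradiction that there is $t_0$ in the support of $d\bar\rho$ with $\bar\sigma(t_0) > t_0$. The right-continuity statements of Lemma \ref{lem2} (both in $t$ and in $a$, applied to $t \mapsto \bar\xi_t[0,t+a]$) yield $\delta,\eta > 0$ with $\bar\xi_t[0,t+\delta) = 0$ on $[t_0,t_0+\eta]$. I would construct a strictly smaller candidate $\tilde\rho \in \D_\R^\up$ by reducing $\bar\rho$ by a small bump supported in $[t_0,t_0+\eta]$, arranged so that $\bar\rho - \tilde\rho$ is non-decreasing. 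Set $(\tilde\xi,\tilde\beta,\tilde\iota) := \Th(\al,\mu+\tilde\rho)$; the monotonicity property of $\Gam$ (Lemma \ref{lem-1d}(1)) gives $\tilde\xi[0,x] \ge \bar\xi[0,x]$. Verification of $\tilde\xi_t[0,t) = 0$ is trivial for $t \le t_0$, and for $t \in [t_0, t_0+\eta]$ it reduces, via Lemma \ref{lem-altIDSP} and the explicit form \eqref{20} of $\Gam_1$, to preserving an inf-relation; here Lemma \ref{lem-add} enters decisively, ensuring that $\al_\cdot[0,t_0+x]-\mu$ is non-increasing on the relevant window, so that a sufficiently small bump leaves the infimum attained at the endpoint. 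This produces a solution of \eqref{30} with $\tilde\rho \le \bar\rho$ and $\tilde\rho \ne \bar\rho$, contradicting minimality.

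For Part (II), let $(\xi,\beta,\iota,\rho)$ be any solution of \eqref{32}; then $\rho \ge \bar\rho$ by the minimality invoked above. Suppose $\rho(t^*) > \bar\rho(t^*)$ for some $t^*$ and put $\tau := \inf\{t : \rho(t) > \bar\rho(t)\}$. Using right-continuity, select $t_1 \in (\tau,t^*]$ in the support of $d\rho$ with $\rho(t_1) > \bar\rho(t_1)$. Condition \eqref{32}(iii) applied to $\rho$ forces $\sigma(t_1) = t_1$, so $\xi_{t_1}$ places mass arbitrarily close to deadline $t_1$. I would then locate a short left-neighborhood $[t_1 - \kappa, t_1]$ on which $\bar\rho$ is constant while $\rho$ strictly increases — such a window exists by Step (I) applied to the minimal solution (which spreads its increases only at points where $\bar\sigma = \mathrm{id}$, so $\bar\rho$ has "room" to be locally flat). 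On this window $\rho - \bar\rho$ is non-decreasing, hence Lemma \ref{lem-1d}(1) applied via the SM representation \eqref{502++} yields $\xi[0,x] \le \bar\xi[0,x]$ and $\sigma \ge \bar\sigma$. Combining with $\sigma(t_1) = t_1$ and $\bar\sigma(t_1) \ge t_1$ gives $\bar\sigma(t_1) = t_1$, so the increase of $\rho$ at $t_1$ could be absorbed into $\bar\rho$ without violating \eqref{30}(ii) — but this contradicts the minimality of $\bar\rho$ unless $\rho = \bar\rho$ in a neighborhood of $t_1$, which contradicts the choice of $t_1$.

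The technical heart is Part (I): the perturbation must be small enough to preserve \eqref{30}(ii), and Assumption \ref{as-add} — specifically the strictly positive lower bound on the service-rate density $m$ and the continuous, atomless structure of $\al$ — is indispensable, as codified in Lemma \ref{lem-add}. Without it, pathological data could allow an entire portion of the support of $d\bar\rho$ to sit strictly above the diagonal $\sigma(t) > t$ with no room to shave. Part (II) is a symmetric perturbation-plus-monotonicity argument, complicated by the fact that $\rho - \bar\rho$ is not globally non-decreasing, which is why the argument must be localized to windows on which one of the two regulators is constant and then stitched together by right-continuity.
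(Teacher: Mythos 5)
Your overall architecture is the same as the paper's: Proposition~\ref{prop1} supplies the minimal solution of \eqref{30}, and the theorem then reduces to showing that a solution of \eqref{30} is minimal if and only if it satisfies \eqref{32}(iii). These are precisely the paper's Propositions~\ref{prop-impl2} and \ref{prop-impl1}, respectively, so you have correctly identified the two directions and the need for Lemma~\ref{lem-add}. However, there are serious problems in the execution of both parts.

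\textbf{Part (I)} (minimal $\Rightarrow$ \eqref{32}(iii), i.e.\ Proposition~\ref{prop-impl2}). Your step ``the right-continuity statements of Lemma~\ref{lem2} \dots\ yield $\delta,\eta>0$ with $\bar\xi_t[0,t+\delta)=0$ on $[t_0,t_0+\eta]$'' is not correct. Right-continuity of $t\mapsto\bar\xi_t[0,t+a]$ gives only that $\bar\xi_t[0,t+a]$ is \emph{small} for $t$ slightly larger than $t_0$, not that it \emph{vanishes} on an interval. Producing an actual zero on an interval requires the monotonicity supplied by Lemma~\ref{lem-add} together with Lemma~\ref{lem1}(2), which is how the paper propagates the zero. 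So the role of Lemma~\ref{lem-add} is not merely, as you put it, to ``leave the infimum attained at the endpoint'' after the perturbation; it is needed already to identify the window on which the perturbation can be made. You also do not say anything about verifying $\tilde\xi_t[0,t)=0$ for $t$ to the right of the perturbation window; in the paper this is a separate segment of the verification (for $t\ge T$, respectively $t\ge T_1$) and it relies on an infimum-preservation argument distinct from the one inside the window.

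\textbf{Part (II)} (\eqref{32}(iii) $\Rightarrow$ minimal, i.e.\ Proposition~\ref{prop-impl1}) contains a genuine gap. You assert that one can ``locate a short left-neighborhood $[t_1-\kappa,t_1]$ on which $\bar\rho$ is constant while $\rho$ strictly increases,'' appealing to Part~(I). This does not follow: nothing in \eqref{32}(iii) or in minimality prevents $\bar\rho$ and $\rho$ from both increasing throughout a neighborhood of $t_1$ while $\rho-\bar\rho$ still grows. There is in general no interval of constancy of $\bar\rho$ to exploit. Consequently the chain of deductions following it (applying Lemma~\ref{lem-1d}(1) on the window, concluding $\sigma\ge\bar\sigma$ there) has no foundation. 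Moreover, the closing sentence --- that absorbing the increase of $\rho$ into $\bar\rho$ ``contradicts the minimality of $\bar\rho$'' --- is logically backwards: minimality only asserts that $\bar\rho$ is below every solution, and the existence of a \emph{larger} solution is no contradiction. The paper's actual argument avoids all of this. Instead it compares $\xi$ with the minimal $\xi^*$ directly: from $\rho(t_1)>\rho^*(t_1)$ by the margin $\kappa$ and $\xi^*_{t_1}[0,y]=0$ for $y<t_1$, it deduces via the Skorokhod map representation that $\xi_\tau[0,z]+\psi^\tau_z(t_1-\tau)\le-\kappa/2$ for a suitable $z>t_1$, then defines $t_0$ as the first hitting time of zero and shows $\xi_{t_0}[0,z]=0$ with $z-t_0>0$, whereupon Lemma~\ref{lem-add} again propagates the zero forward so that $\sigma(t)>t$ on a set charged by $\bm^\rho$. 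A separate case ($\rho$ jumping at $\tau$) is handled directly. This comparison-with-$\xi^*$ mechanism is the technical heart of the direction you call Part (II), and your sketch does not reproduce it.
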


In the next section we prove Theorem~\ref{th1}.

\subsubsection{Proof of  Theorem \ref{th1}}
\label{subsub-equivfluid}

Fix $(\al,\mu)$ satisfying Assumption \ref{as-add}. In light of the  uniqueness of a minimal solution
established in  Proposition
\ref{prop1}, it suffices to show that  a solution
to \eqref{30} is minimal if and only if it satisfies condition
\eqref{32}(iii).
This is established in Propositions \ref{prop-impl1} and \ref{prop-impl2}
below.

\begin{proposition}
\label{prop-impl1}
Suppose $(\al, \mu)$ satisfies Assumption \ref{as-add}, and let $(\xi,\beta,\io,\rho)$ be a solution of \eqref{30}.  If $(\xi, \rho)$ satisfy condition \eqref{32}(iii) then
$(\xi,\beta,\io,\rho)$  is a minimal solution of \eqref{30}.
\end{proposition}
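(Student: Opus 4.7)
The plan is proof by contradiction. Fix an arbitrary solution $(\xi^1,\beta^1,\io^1,\rho^1)$ of \eqref{30} and suppose $\rho(t_0)>\rho^1(t_0)$ for some $t_0>0$; set
\[
\tau:=\inf\{t\ge 0:\,\rho(t)>\rho^1(t)\}<\iy.
\]
By right-continuity, $\rho(s)\le\rho^1(s)$ for $s<\tau$ (hence $\rho(\tau^-)\le\rho^1(\tau^-)$), and there exist $t_n\downarrow\tau$ with $\rho(t_n)>\rho^1(t_n)$. The elementary identity
\[
d\rho([\tau,t_n])-d\rho^1([\tau,t_n])=[\rho(t_n)-\rho^1(t_n)]+[\rho^1(\tau^-)-\rho(\tau^-)]>0
\]
then shows that $d\rho$ places strictly positive mass on $[\tau,t_n]$. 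By Assumption \ref{as-add} and Lemma \ref{lem-add} (applied with $\tau':=\tau$), fix $\delta_0\in(0,1)$ so small that $t\mapsto\al_t[0,y]-\mu(t)$ is non-increasing on $[\tau,\tau+2]$ for every $y\in[\tau,\tau+2\delta_0]$, and pass to large $n$ so that $t_n\in[\tau,\tau+\delta_0]$.

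Since hypothesis \eqref{32}(iii) concentrates the full $d\rho$ mass on $\{s:\sig(s)=s\}$, a short measure-theoretic argument (intersecting $[\tau,t_n]$ with the sets $\{\sig=\mathrm{id}\}$ and $\{\rho>\rho^1\}$, the latter being a full-measure right-tail of $[\tau,t_n]$) produces $t^\ast\in[\tau,t_n]$ with both $\sig(t^\ast)=t^\ast$ and $\rho(t^\ast)>\rho^1(t^\ast)$. Set $\phi(s):=\al_s[0,t^\ast]-\mu(s)$. Atomlessness of $\al_s$ (Assumption \ref{as-add}) gives $\xi_{t^\ast}[0,t^\ast]=\xi_{t^\ast}[0,t^\ast)=0$, and similarly $\xi^1_{t^\ast}[0,t^\ast]=0$, so by Lemma \ref{lem-altIDSP}
\[
(\phi-\rho)(t^\ast)=\inf_{s\le t^\ast}(\phi-\rho)(s)\w 0
\quad\text{and}\quad
(\phi-\rho^1)(t^\ast)=\inf_{s\le t^\ast}(\phi-\rho^1)(s)\w 0.
\]
If $(\phi-\rho)(t^\ast)=0$ then the second identity gives $\rho^1(t^\ast)\ge\phi(t^\ast)=\rho(t^\ast)$, contradicting $\rho(t^\ast)>\rho^1(t^\ast)$.

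The remaining case is $-M:=(\phi-\rho)(t^\ast)<0$. The non-increasingness from the localisation step yields $\inf_{s\in[\tau,t^\ast]}(\phi-\rho)(s)=-M$. Applying the Skorokhod formula to $\psi_x(s):=\al_s[0,t^\ast+x]-\mu(s)$ for small $x>0$, the non-increasingness of $\psi_x-\rho$ on $[\tau,t^\ast]$ together with $\al_{t^\ast}(t^\ast,t^\ast+x]\to 0$ as $x\downarrow 0$ (atomlessness) imply that $\xi_{t^\ast}[0,t^\ast+x]=0$ for small $x>0$ whenever $\inf_{s<\tau}(\phi-\rho)(s)>-M$; the hypothesis $\sig(t^\ast)=t^\ast$ therefore forces $\inf_{s<\tau}(\phi-\rho)(s)=-M$. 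Combining this with $\rho(s)\le\rho^1(s)$ on $[0,\tau)$ gives $\inf_{s<\tau}(\phi-\rho^1)(s)\le -M$, and consequently
\[
\inf_{s\le t^\ast}(\phi-\rho^1)(s)\w 0\;\le\;-M\;<\;-M+[\rho(t^\ast)-\rho^1(t^\ast)]=(\phi-\rho^1)(t^\ast),
\]
which contradicts the second Skorokhod identity above. The two delicate points are the measure-theoretic selection of $t^\ast$ with both $\sig(t^\ast)=t^\ast$ and $\rho(t^\ast)>\rho^1(t^\ast)$, and the Skorokhod-map estimate that forces $\inf_{s<\tau}(\phi-\rho)(s)=-M$ out of $\sig(t^\ast)=t^\ast$; both rely essentially on the atomlessness of $\al_{t^\ast}$ and on the non-increasingness supplied by Lemma \ref{lem-add}.
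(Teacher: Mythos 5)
The overall strategy here is sound and genuinely parallel to the paper's: both proofs compare $\rho$ with a competing solution, invoke Lemma \ref{lem-add} to get a monotone drift near the critical time $\tau$, and extract a contradiction from the explicit Skorokhod formula. Your version is slicker in one respect: you compare with an \emph{arbitrary} solution $\rho^1$ (rather than the minimal solution $\rho^*$ of Proposition \ref{prop1}) and localize the contradiction to a single time $t^\ast$, rather than producing an interval $[t_0,t_1]$ charged by $d\rho$ on which $\sigma>\mathrm{id}$. The Case A/Case B Skorokhod-map computation, including the $\psi_x$-perturbation and the role of atomlessness, is correct and handles both of the paper's cases in one sweep (the paper's Case 2, a jump at $\tau$, is subsumed because if $\tau=0$ the infimum over $[0,\tau)$ is $+\infty$ and if $\tau>0$ and $\rho$ jumps above $\rho^1$ the argument still goes through at $t^\ast$).

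There is, however, a genuine gap in the selection of $t^\ast$. You need $t^\ast\in[\tau,t_n]$ with both $\sigma(t^\ast)=t^\ast$ and $\rho(t^\ast)>\rho^1(t^\ast)$, and you obtain it by intersecting $\{\sigma=\mathrm{id}\}$ (full $d\rho$-measure by \eqref{32}(iii)) with $\{\rho>\rho^1\}\cap[\tau,t_n]$, ``the latter being a full-measure right-tail of $[\tau,t_n]$.'' That characterization is false: $g:=\rho-\rho^1$ can oscillate in sign on $(\tau,t_n]$ (e.g.\ $\rho^1$ may jump above $\rho$ somewhere inside the interval), so $\{g>0\}$ need neither be a right-tail of $[\tau,t_n]$ nor carry all of the $d\rho$-mass there. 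What \emph{is} true, and is what you actually need, is the weaker assertion $d\rho\bigl(\{g>0\}\cap[\tau,t_n]\bigr)>0$, but this requires its own argument. For instance, set $\sigma_1:=\sup\{s\in[\tau,t_n]:g(s)\le0\}$; if $g(\sigma_1)>0$ (or the set is empty and $g(\tau)>0$) then $g$ jumps up at $\sigma_1$, forcing $\Delta\rho(\sigma_1)>\Delta\rho^1(\sigma_1)\ge0$ so $d\rho(\{\sigma_1\})>0$ with $\sigma_1\in\{g>0\}$; if instead $g(\sigma_1)\le0$, right-continuity gives $g(\sigma_1)=0$, and then $0<g(t_n)-g(\sigma_1)=d\rho((\sigma_1,t_n])-d\rho^1((\sigma_1,t_n])$ forces $d\rho((\sigma_1,t_n])>0$ with $(\sigma_1,t_n]\subset\{g>0\}$. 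The paper sidesteps this entirely by producing, via the first hitting time $t_1=\inf\{t\in[\tau,t_2]:\rho(t)=\rho(t_2)\}$, an interval $[t_0,t_1]$ that $\bm^\rho$ charges and on which $\xi_\cdot[0,z]\equiv0$ for some $z>t_1$, so $\sigma>\mathrm{id}$ there; this is why the paper splits into the two cases $\Delta(\tau)=\Delta(\tau-)$ and $\Delta(\tau)>\Delta(\tau-)$ rather than trying to land on a single point in a set defined by a pointwise inequality.
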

\proof
Let $(\xi,\beta,\io,\rho)$ be a solution of \eqref{30}.
We will assume that
$\rho$ is not minimal and show that then
\begin{equation}
\label{posset}
\bm^\rho \left( \{t: \sig (t) >t \}\right)>0,
\end{equation}
where recall that  $\bm^\rho$ is the Lebesgue-Stieltjes measure
associated with $\rho$, as defined in \eqref{meas-LS}.
This would then contradict \eqref{32}(iii), and hence prove the
proposition.  To this end, denote by $(\xi^*,\beta^*,\io^*,\rho^*)$ the minimal
solution of \eqref{30} and define
\[
\Del
(t):=\rho(t)-\rho^*(t), \quad \mbox{ and } \quad \tau :=\inf\{t\ge0:\Del (t)>0\},
\]
where we follow the convention that $\rho (0-)=\rho^* (0-)=\Del(0-)=0$.
Then the assumption that $\rho$ is not minimal implies $\tau<\iy$.
Also, provided $\tau > 0$,  we have
$\Delta (\tau-) = 0$ and the solutions $(\xi,\beta,\io,\rho)$ and
$(\xi^*,\beta^*,\io^*,\rho^*)$ of \eqref{30}
agree on $[0,\tau)$.
Moreover, since \eqref{30}(i) implies $(\xi, \beta, \iota) = \Theta
(\alpha, \mu + \rho)$ and $(\xi^*, \beta^*, \iota^*) = \Theta
(\alpha, \mu + \rho^*)$,  it follows from Lemma \ref{lem-altIDSP} that
for any $x \geq 0$,
\begin{equation}
\label{sp-char}
\xi[0,x]=\Gam_1[\psi_x]  \quad \mbox{ and } \quad  \xi^*[0,x] = \Gam_1
[\psi^*_x],
\end{equation}
where for conciseness, we set
\begin{equation}
\label{star-char}
\psi_x(t):= \al_t[0,x]-\mu(t)-\rho(t), \quad \mbox{ and } \quad  \psi_x^*(t)
:= \al_t [0,x] - \mu(t) - \rho^*(t),  \qquad t \geq 0.
\end{equation}
We distinguish two mutually exhaustive cases.

{\it Case 1: } $\Del(\tau)=\Del (\tau-)$. \\
In this case $\rho (\tau)=\rho^*(\tau)$ and so the solutions agree on
$[0,\tau]$.  In particular, we have
\begin{equation}
\label{case1-xieq}
\xi_{\tau}[0,x] = \xi^*_\tau [0,x], \quad  x \geq 0.
\end{equation}
Given that Assumption \ref{as-add} holds, let $\delta_0 \in (0,1)$ be
as in Lemma \ref{lem-add} (with $\tau^\prime = \tau$).
Then we have the following claim.

{\em Claim. }
If there exists $t_0 \in [\tau, \tau+\delta_0]$ and $x \in
(0,\delta_0)$ such that $\xi_{t_0}[0,t_0+x]=0$ and
$\bm^\rho[t_0, t_0+\eps] > 0$ for some  $\eps > 0$, then \eqref{posset} holds.

{\em Proof of Claim. } By the choice of $\delta_0$,
 Lemma \ref{lem-add} (with $\tau^\prime = \tau$, $t_0^\prime =  t_0$), \eqref{star-char} and the fact that $\rho$
is non-decreasing imply that for every $x \in (0,\delta_0)$, $t \mapsto \psi_{t_0+x}(t)$ is non-increasing
on $[\tau, \tau +2]$.   For any such $x$, since $\xi_{t_0}[0,t_0+x] = 0$,
 \eqref{sp-char} and  Lemma \ref{lem1}(2) together imply that
$\xi_t[0,t_0+x]= 0$ for all $t \in [t_0, \tau +2]$.  But this implies that $\sigma(t) > t$ for every $t
\in [t_0, t_0 +x)$,  and hence, \eqref{posset}  follows from the
assumption of the claim that  $\bm^{\rho}[t_0,t_0+\eps] > 0$ for some
$\eps > 0$.
\qed

To complete the proof of \eqref{posset} under Case 1, it suffices to verify the assumptions of the claim.
To this end,  let $t_2\in(\tau,\tau+\delta_0/2)$
be such that $\Del (t_2)>0$ (such a $t_2$ exists by the definition
and finiteness of $\tau$),
 and let $t_1:=\inf\{t\in[\tau,t_2]:\rho(t)=\rho(t_2)\}$.  Then, since
 $\Delta (\tau) = 0$,
 clearly $t_1$ is a strict maximizer of $\rho$ on $[\tau,t_1]$,
 namely,
\begin{equation}\label{15}
t\in[\tau,t_1) \text{ implies } \rho(t)<\rho(t_1).
\end{equation}
By the
right-continuity of $\rho$, the minimality of the solution
$(\xi^*,\beta^*, \iota^*, \rho^*)$ and the
fact that $\Delta(t_2) > 0$ and $\rho^*$ is non-decreasing, we have
$\rho (t_1)=\rho (t_2)$ $ >\rho^* (t_2) \ge\rho^* (t_1)$,
and so $t_1 > \tau$.  Denote $\kappa :=\Delta (t_1)>0$.
For every $t \geq 0$, $\alpha_t \in \calM_0$ by Assumption
\ref{as-add}(i)  and hence, it follows from the relation  $(\xi,
\beta,\iota) = \Theta(\al, \mu+\rho)$
and Proposition \ref{prop2.1a}
that
$\xi_t \in \calM_0$.
Together with the fact that  $\al$ is right-continuous,  we can
find $\eps \in (0, \delta_0/2)$ such that, with $y=t_1-\eps$ and $z=t_1+\eps$, we have  $y\in(\tau,t_1)$ and
\begin{equation}
\label{bound-kappa}
\xi_\tau(y,z]+\al_{t_1-\tau}^\tau(y,z] \le \kappa/2,
\end{equation}
where above and in what follows, we use the notation $f^T(\cdot) = f(T+\cdot) -
f(T)$, $T > 0,$ from \eqref{shift-f}.
Fix such an $\eps> 0$ and the corresponding $y$ and $z$.
We now compare  $\xi_t[0,z]$ and $\xi^*_t[0,y]$ using the relations in
\eqref{sp-char} and \eqref{star-char}.  First note that
\[
\xi_{\tau}[0,z] + \psi^{\tau}_z(t)=\xi_{\tau}[0,y] + \psi^{*,\tau}_y(t)+\xi_\tau(y,z]+\al^\tau_t(y,z]-\Del(\tau
  +t), \quad t \geq 0,
\]
where we used the fact that $\Del(\tau)=\Delta(\tau-)=0$. Substituting $t = t_1 -
\tau$ and $\Delta (t_1) = \kappa$ above and using \eqref{bound-kappa}
and the fact that  $\xi_\tau = \xi^*_\tau$,
we obtain
\[
\xi_{\tau}[0,z] + \psi^{\tau}_z (t_1-\tau) \le \xi_{\tau}[0,y]  +
\psi^{*,\tau}_y (t_1-\tau)+\kappa/2-\kappa=\xi_{\tau}^*[0,y] + \psi^{*,\tau}_y(t_1-\tau)-\kappa/2.
\]
However,  since the minimal solution satisfies \eqref{30}(ii) and $y < t_1$, we have
$\xi^*_{t_1}[0,y]=0$.   When combined  with \eqref{sp-char} and Lemma
\ref{lem1}(2), it follows that $\psi^{*,\tau}_y (t_1 - \tau) \leq -
\xi_{\tau}[0,y]$.  Together with the last display, this means that
\begin{equation}\label{17}
\xi_{\tau}[0,z] +\psi^{\tau}_z(t_1-\tau)\le-\kappa/2.
\end{equation}

Next, define
\begin{equation}\label{19}
t_0 :=\inf\{t\ge \tau:\xi_{\tau}[0,z] +\psi^{\tau}_z(t-\tau)\le0\}.
\end{equation}
Then \eqref{17} and the fact that $t_1 \leq t_2 < \tau + \delta_0/2$
imply  $t_0\in[\tau,t_1] \subset
[\tau,\tau+\delta_0]$ and from \eqref{19}, it is clear that $\inf_{s \in
  [0,t_0-\tau]}\psi^{\tau}_z(s)=\psi^{\tau}_z(t_0-\tau) \leq -
\xi_{\tau}[0,z]$.  Thus, Lemma \ref{lem1}(2)  implies that
$\xi_{t_0}[0,z] = 0$.   Now, $x := z - t_0$  lies in $[0,\delta_0]$
because $z = t_1 + \varepsilon$, $t_0 < t_1 \leq t_0 + \delta_0/2$ and $\varepsilon <
\delta_0/2$.  Thus, we have shown that $\xi_{t_0}[0,t_0+x]
=0$ for some $t_0 \in [\tau, \tau+\delta_0]$ and $x \in
(0,\delta_0)$.  To complete the verification of the assumptions of the
claim, it suffices to  show that $\bm^\rho$ charges $[t_0,t_1]$ (where the case $t_0=t_1$ is
possible), or equivalently, that $\rho (t_1)  > \rho(t_0-)$.  If
$t_0<t_1$ then this follows from \eqref{15}.
If $t_0=t_1$ then by \eqref{17} and \eqref{19}, $\rho$ must have a
jump at $t_0 = t_1$
(since $\psi_z - \rho = \alpha[0,z] - \mu$ is continuous by Assumption \ref{as-add}).
Thus, $\rho(t_1) > \rho(t_1-)$ and so we have shown that $\bm^\rho$ charges the set
$\{t \geq 0: \sig(t)>t \}$.  This proves  \eqref{posset} for Case 1.

{\it Case 2: $\Del(\tau)>\Del(\tau-)$.}

In this case $\rho$ must have a jump at $\tau$ (or, if $\tau=0$, one
must have $\rho(0)>0$).
Hence, it suffices to show that $\sig(\tau)>\tau$.
Consider first the case $\tau>0$.
In this case, let $c:=\Del (\tau)-\Del(\tau-)=\rho(\tau)-\rho^*(\tau),$ and note
that $c > 0$ by the case assumption.
By \eqref{30}(ii), for every $y \in [0,\tau)$,
$\xi^*_\tau[0,y] = 0$.   The equation \eqref{star-char}, with $x=y$, and Lemma \ref{lem1}(2) then imply that
$\inf_{t\in [0,\tau]}\psi_y^*(t) =\psi_y^*(\tau) \leq - \xi_0^*[0,y]$.
Since $\al_\tau$ has no atoms, one can find $y$ and $z$ with $y<\tau<z$ such that
$\al_\tau(y,z]<c$.
Thus, recalling the definition of $\psi_z$ in  \eqref{star-char},  we
have
\[
\psi_z(t)=\psi^{*}_y(t)+\al_t(y,z]-c\ind_{\{t=\tau\}},
\quad t\in [0, \tau].
\]
Since $\inf_{t\in [0,\tau]}\psi_y^*(t) =\psi_y^*(\tau)$ and
$\al_t(y,z]-c\ind_{\{t=\tau\}}$ is negative only when $t = \tau$,
it follows that $\inf_{t\in [0,\tau]}\psi_y(t) =\psi_y(\tau) \leq \psi_y^* (\tau)
\leq -\xi_0^*[0,y]=-\xi_0[0,y]$, where the last equality holds because
$0 < \tau$. Another application of Lemma \ref{lem1}(2) in conjunction
with \eqref{star-char} then shows that
$\xi_{\tau}[0,z] = 0$.  Since $z > \tau$, this implies
$\sig(\tau)>\tau$.

Finally, if $\tau=0$, note that by \eqref{sp-char}, \eqref{star-char} and the explicit
expression for  $\Gam_1$,   for $z\ge0$,
$\xi^*_0[0,z]=\psi^{*,z}(0)\vee0$, which  is
 equal to  $(\initxi[0,z]-\mu_0-\rho^*(0))\vee 0$, where $\initxi$ is
 as in  \eqref{11}.
Since  $(\xi^*, \beta^*,
\iota^*)  = \Theta (\alpha, \mu+\rho^*)$ and $\al_0 = \initxi$ is
absolutely continuous,  by Proposition \ref{prop2.1a} $\xi^*_0$ has no
atoms.   Hence, $\xi^*_0[0,z]\to0$ as $z\to0$.
Since $\rho(0)>\rho^*(0)$ (because $\tau=0$) it follows that there exists $z>0$
for which $\xi_0[0,z]=(\initxi[0,z]-\mu(0)-\rho(0))\vee0=0$.
This shows that $\sigma(0) > 0$ and
thus, proves \eqref{posset} for Case 2.  This completes the proof of
the proposition.
\qed

\skp

We now establish the converse result.

\begin{proposition}
\label{prop-impl2}
Suppose $(\al, \mu)$ satisfies Assumption \ref{as-add}, and let $(\xi,\beta,\io,\rho)$ be a solution of \eqref{30} for the data
$(\al,\mu)$.   If $(\xi,\beta,\io,\rho)$  is a minimal solution of
\eqref{30}, then $(\xi, \rho)$ satisfies condition \eqref{32}(iii).
\end{proposition}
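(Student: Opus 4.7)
My plan is to establish the contrapositive: if $(\xi,\beta,\io,\rho)$ is a solution of \eqref{30} that violates \eqref{32}(iii), then $\rho$ is not minimal, because one can exhibit another solution whose reneging count is strictly smaller. Assume $\bm^\rho(E)>0$ where $E:=\{t\ge 0:\sig(t)>t\}$. Using Lemma \ref{lem2}, the fact that $\al\in\C_{\calM_0}^\up$ (so $\xi_t$ has no atoms), and the observation that $\sig(t)>t$ iff $\xi_t[0,t+1/n]=0$ for some $n\in\N$, write $E=\bigcup_n E_n$ with $E_n:=\{t\ge 0:\xi_t[0,t+1/n]=0\}$; countable additivity yields some $n_0$ with $\bm^\rho(E_{n_0})>0$, and then extracting a point of right-accumulation of $\ind_{E_{n_0}}\bm^\rho$ produces a test time $t_0\in E_{n_0}$ such that $\bm^\rho(E_{n_0}\cap[t_0,t_0+\varepsilon])>0$ for every $\varepsilon>0$.

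Next I would erase a small amount $\delta>0$ of $d\rho$-mass starting at $t_0$: concretely, set $\rho^\prime(t):=\rho(t)$ for $t<t_0$ and $\rho^\prime(t):=\rho(t)-[(\rho(t)-\rho(t_0-))\wedge\delta]$ for $t\ge t_0$, which is right-continuous and non-decreasing, lies pointwise below $\rho$, and is strictly below $\rho$ on a set of positive $\bm^\rho$-measure by the choice of $t_0$. Let $(\xi^\prime,\beta^\prime,\io^\prime):=\Th(\al,\mu+\rho^\prime)$ so that \eqref{30}(i) holds by construction; it remains to verify \eqref{30}(ii). Once this is done, $(\xi^\prime,\beta^\prime,\io^\prime,\rho^\prime)$ will be a solution of \eqref{30} with reneging component dominated by and distinct from $\rho$, contradicting minimality.

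The hard part is verifying $\xi^\prime_t[0,t)=0$ for every $t>0$; by Lemma \ref{lem-altIDSP}, $\xi^\prime[0,x]=\Gam_1[\al[0,x]-\mu-\rho^\prime]$, and the monotonicity in Lemma \ref{lem-1d}(1) only gives $\xi^\prime_t[0,x]\ge\xi_t[0,x]$, which could a priori be strict for some $x\in[0,t)$ when $t\ge t_0$. I would control this by applying Lemma \ref{lem-add} with $\tau^\prime=t_0$ to obtain $\delta_0\in(0,1/n_0)$ such that for every $a\in[0,\delta_0]$ the function $s\mapsto \al_s[0,t_0+a]-\mu(s)$ is non-increasing on $[t_0,t_0+2]$, and then choose $\delta,\varepsilon$ small relative to the ``slack'' $-\psi_x(t_0)\ge 0$ coming from $\xi_{t_0}[0,x]=\Gam_1[\psi_x](t_0)=0$ for $x\in[0,t_0+1/n_0]$, where $\psi_x$ is as in \eqref{star-char}. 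The intended local computation via \eqref{20} is: for $t\in[t_0,t_0+\delta_0]$ and $x\in[0,t)$, the perturbation $\rho-\rho^\prime$ lies in $[0,\delta]$ on $[t_0,t]$ and vanishes on $[0,t_0)$, so because $\psi_x+\rho=\al[0,x]-\mu$ is non-increasing on $[t_0,t_0+2]$ the running infimum of $\al[0,x]-\mu-\rho^\prime$ up to $t$ is still realized at a time $s^\star\in[t_0,t]$ with $\al_{s^\star}[0,x]-\mu(s^\star)-\rho^\prime(s^\star)\le -\xi_{t_0}[0,x]-\kappa$ for a margin $\kappa\ge\delta$ under the chosen $\delta$; Lemma \ref{lem1}(2) then forces $\Gam_1[\al[0,x]-\mu-\rho^\prime](t)=0$. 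The complementary ranges $t<t_0$ (where $\rho^\prime=\rho$) and $t$ beyond the perturbed interval (where $\rho-\rho^\prime$ stabilizes at the constant $\delta$) are handled by a direct application of Lemma \ref{lem1}. The principal technical difficulty is thus calibrating $\delta$ and $\varepsilon$ so that this absorption of the perturbation holds uniformly in $x\in[0,t)$ and $t$ in the relevant window.
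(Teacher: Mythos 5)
Your overall strategy—proceed by contrapositive and modify $\rho$ downward on a small time window starting at a well-chosen $t_0$ to produce a competing solution of \eqref{30}—is the same as the paper's. The critical flaw is in the form of the perturbed function $\rho^\prime$: it never \emph{rejoins} $\rho$. Once $\rho$ has increased by $\delta$ past $\rho(t_0-)$, your $\rho^\prime$ tracks $\rho-\delta$ forever, so the perturbation $g:=\rho-\rho^\prime$ is a non-zero constant on a terminal interval. This globally shifts the netput: $\psi^\prime_x(t)=\psi_x(t)+g(t)$ with $g(t)=\delta>0$ for all large $t$. By contrast, the paper's $\tilde\rho$ is constant on $[t_0,T)$ and then \emph{equals} $\rho$ on $[T,\infty)$, so for $t\ge T$ one has $\tilde\psi_x(t)=\psi_x(t)$ exactly (not just $\ge$), while $\tilde\psi_x(s)\ge\psi_x(s)$ for $s\le t$; this is precisely what makes $\inf_{s\le t}\tilde\psi_x(s)\wedge 0=\tilde\psi_x(t)$, hence $\tilde\xi_t[0,x]=0$, follow for free from $\xi_t[0,x]=0$. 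In your construction $\psi^\prime_x(t)=\psi_x(t)+\delta>\psi_x(t)$, and no calibration of $\delta$ rescues this: if $\inf_{s\in[0,t_0)}(\psi_x(s)\wedge 0)<\psi_x(t)+g(t)$, then $\Gam_1[\psi_x+g](t)>0$, and there is in general no positive lower bound on the slack $\inf_{s<t_0}\psi_x(s)-\psi_x(t)$ uniformly in $x<t$ and $t>t_0$. Concretely, whenever $\rho$ has a jump at $t_0$ of size $J>0$ your $\rho^\prime(t_0)<\rho(t_0)$ already, and for $x<t_0$ one finds $\xi^\prime_{t_0}[0,x]>0$ unless $\mu(t_0)-\mu(t_0)\ge J\wedge\delta$, which fails; so $\xi^\prime$ violates \eqref{30}(ii) already at $t_0$. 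Thus the constructed $(\xi^\prime,\beta^\prime,\io^\prime,\rho^\prime)$ is not a solution of \eqref{30}, and no contradiction to minimality is obtained.

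A secondary gap concerns the extraction of $t_0$. The assertion that one can always find a $t_0\in E_{n_0}$ that is simultaneously a point of right accumulation of $\bm^\rho\vert_{E_{n_0}}$ is not automatic; it requires splitting into the case where the essential supremum $T$ of $\bm^\rho\vert_{E_{n_0}}$ carries an atom (which forces $T\in E_{n_0}$ and a jump of $\rho$ at $T$) and the case where it does not. This is exactly the paper's Case 1/Case 2 dichotomy, and each case leads to a different choice of perturbation interval (ending at $T$ versus starting at $T$). Your sketch also restricts the analysis to $t$ within a $\delta_0$-window of $t_0$ and to $t$ beyond the perturbed interval, but for $x\in[t_0,t)$ with $t$ in an intermediate range the argument via Lemma \ref{lem-add} does not apply directly, and you provide no argument there. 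The fix is to replace your $\rho^\prime$ by a function that is constant on a bounded interval and rejoins $\rho$ at its right endpoint, and then run the three-regime verification ($t\le t_0$, $t_0<t<T$, $t\ge T$) as in the paper, being careful to choose $t_0$, $T$, and an auxiliary level $y$ so that $\xi_{t_0}[0,y]=0$ and $\alpha[0,y]-\mu$ is non-increasing on the relevant window via Lemma \ref{lem-add}.
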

\proof  We again proceed by proving the contrapositive.
Fix $(\al, \mu)$ that  satisfies Assumption \ref{as-add}, and  let $(\xi,\beta,\io,\rho)$ be a solution of \eqref{30}
for which \eqref{32}(iii) is false. The proof is established
by showing that $(\xi,\beta,\io,\rho)$ is not minimal by explicitly constructing another solution
$(\tilde\xi,\tilde\beta,\tilde\io,\tilde\rho)$ of \eqref{30}
for which $\rho\le\tilde\rho$ is false.
First, note that \eqref{30}(i) and Lemma \ref{lem-altIDSP} imply that
\begin{equation}
\label{sp-char2}
\xi[0,x] = \Gam_1 (\psi_x),    \quad \mbox{ where } \quad \psi_x :=
\alpha [0,x] - \mu  - \rho, \qquad x \geq 0.
\end{equation}
We will find it convenient to use the following equivalent form of \eqref{32}(iii):
\begin{center}
\{$\sig_t=t$\, $d\rho$-a.e.\}\,$\Longleftrightarrow$ \{$\forall\, \delta>0$,\,
$\xi_t[0, t+\delta]>0$\, $d\rho$-a.e.\}.
\end{center}
Since, by our assumptions,
\eqref{32}(iii) does not hold, there exist $\del > 0$ and  a measurable set $B
\subset \{t \geq 0: \sig(t) \ge t  + \del\}$ with $\bm^\rho(B)>0$.
Assume without loss of generality that
$B$ is bounded, and
denote by $T$ the essential
supremum of the restriction of $\bm^\rho$ to $B$:
\[
T :=\sup\{t \in [0,\infty):\bm^\rho(B\cap[t,\iy))>0\}.
\]
Then  $T \in [0,\infty)$ and  we must have $\bm^\rho (B \cap [0,T]) > 0$.
We now distinguish two mutually exclusive and exhaustive cases.

{\em Case 1. } $T \not \in B$ or  $\bm^\rho (\{T\}) = 0$. \\
Since $\bm^\rho (B \cap [0,T]) > 0$, the assumptions of this case then imply   $T  > 0$
and for every $t \in [0,T)$, there exists $t_0 \in [t,T)$ such that
\begin{equation}
\label{case1-cond}
\sigma (t_0) \geq t_0 + \delta \quad \mbox{ and } \quad \rho (T-) >
\rho(t_0).
\end{equation}
Fix $t_0 \in (T-\delta, T)$ for which \eqref{case1-cond} holds and
choose $y \in (T, t_0 + \delta)$. Then we have
\begin{equation}
  \label{154}
  0< T- \delta <  t_0<T<y<t_0+\del.
\end{equation}
Also, because  $\sig (t_0)\ge t_0+\del$ and $y<t_0+\del$, the fact
that $\xi$ satisfies
\eqref{32}(ii) implies
\begin{equation}
\label{zero-xit}
\xi_{t_0}[0,y] = 0.
\end{equation}
 Moreover,  let $\delta_0$ be the quantity in Lemma \ref{lem-add} when
 $\tau^\prime = t_0$ and without loss of generality assume that
 $\delta < \delta_0$.  Then  we can set $t_0^\prime = T$ and $x = y-T$
 in Lemma \ref{lem-add} to conclude that
\begin{equation}
\label{ref-altinc}
t \mapsto \alpha_t[0,y] - \mu(t)  \mbox{ is non-increasing on } [t_0,
t_0 +2].
\end{equation}

We now construct $\tilde\rho\in\D_\R^\up$ as follows:
\[
\tilde\rho(t) :=
\begin{cases}
\rho(t), & t\in[0,t_0),\\
\rho (t_0), & t\in[t_0,T),\\
\rho(t), & t\in[T,\iy).
\end{cases}
\]
Let $(\tilde\xi,\tilde\beta,\tilde\io)=\Th(\al,\mu+\tilde\rho)$, and
note that then, again by Lemma \ref{lem-altIDSP}, we have the analog of \eqref{sp-char2}:
\begin{equation}
\label{sp-char3}
\tilde \xi[0,x] = \Gam_1 ( \tilde{\psi}_x), \quad \mbox{ where } \quad
\tilde{\psi}_x := \alpha[0,x] - \mu - \tilde{\rho}, \qquad x \geq 0.
\end{equation}
Our goal now is to show that \eqref{30}(ii) holds for $\tilde\xi$;
once this is established, one has
a solution $(\tilde\xi,\tilde\beta,\tilde\io, \tilde \rho)$ of \eqref{30} with $\tilde\rho (T-) =
\rho(t_0) <\rho(T-)$, where the last inequality is due to
\eqref{case1-cond}, thus
contradicting the minimality of the solution $(\xi, \beta, \iota, \rho)$ of \eqref{30}.

To show that \eqref{30}(ii) holds for $\tilde \xi$ or, equivalently,
that $\tilde\xi_t[0,t) = 0$ for all $t > 0$,  first  note that when $t \in
[0, t_0]$, this follows from the corresponding property for $\xi$
because $\rho$ and $\tilde \rho$, and hence, by \eqref{sp-char2} and
\eqref{sp-char3}, $\xi$ and $\tilde \xi$, coincide on $[0,t_0]$.
Next, consider $t\geq T$ and fix $z < t$.  Showing \eqref{30}(ii)
for $\tilde{\xi}$ here amounts to showing that
for any $z<t$, $\tilde\xi_t[0,z]=0$.  Since $\xi$ satisfies
\eqref{30}{ii)},
we know that $\xi_t[0,z]=0$ for such $t$ and $z$.   Together with
\eqref{sp-char2} and  Lemma \ref{lem1}(2), this implies that
$\psi_z (t) = \inf_{s \in [0,t]} \psi_z (s)
\leq - \xi_{0}[0,z]$.  When combined with the relations $\rho(t) =
\tilde{\rho}(t)$, $\rho(s) \geq \tilde{\rho}(s)$ for all $s \in [0,t]$
and $\xi_{0} = \tilde{\xi}_{0}$, we see that
 $\inf_{s \in [0,t]}\tilde{\psi}_z (s) =
 \tilde{\psi}_z (t) = \psi_z (t)  \leq -
 \tilde{\xi}_{0}[0,z]$.  Due to \eqref{sp-char3} and Lemma \ref{lem1}(2),
the last relation shows that $\tilde{\xi}_t[0,z] = 0$.

Finally, we consider  $t\in(t_0,T)$ and establish a stronger claim, namely, that
 $\tilde\xi_t[0,y]=0$ (recall that $y>T$).
 In this case, since $\xi_{t_0} = \tilde{\xi}_{t_0}$,
\eqref{zero-xit} implies that
$\tilde{\xi}_{t_0}[0,y] = 0$.
  Moreover, since
 $\tilde{\rho}$ is non-decreasing, \eqref{ref-altinc}
implies that $\alpha [0,y] - \mu - \tilde{\rho}$ is non-increasing
on $[t_0, t_0+2]$.  Together with \eqref{sp-char3} and  Lemma
\ref{lem1}(2) this implies that $\tilde{\xi}_{t}[0,y] = 0$ for  $t \in
[t_0, t_0+2]$ and in particular, for all $t \in [t_0, T]$.
 As a result, $\tilde{\xi}_t[0,t) = 0$ for all $t \geq 0$, which
 implies $\tilde{\xi}$ satisfies \eqref{30}(ii) as claimed.

{\it Case 2: $T \in B$ and $\bm^\rho (\{T\}) > 0$.} \\
In this case,  $\sig (T)\ge T+\del$ by the definition of $B$.  Setting $\rho (0-)=0$,  for an arbitrary  $T_1 > T$, we define
\[
\tilde\rho(t) :=
\begin{cases}
\rho(t), & t\in[0,T),  \mbox{ if }  T  > 0, \\
\rho (T-), & t\in[T,T_1),\\
\rho(t), & t\in[T_1,\iy).
\end{cases}
\]
 Since $\rho \in \D_{\R}^\uparrow$, clearly $\tilde\rho$
   also lies in  $\D_\R^\up$. Define
   $(\tilde\xi,\tilde\beta,\tilde\io) := \Theta (\alpha, \mu + \tilde
   \rho)$ and, as in Case 1, note that \eqref{sp-char3} holds.
By construction, $\tilde{\rho}(t) \leq \rho(t)$ for every $t \in
[0,\infty)$ and for $t \in [T,T_1)$, $\tilde{\rho} (t)  = \rho (T-) <
\rho (T) \leq \rho(t)$, where we used the case assumption,  $\bm^\rho (\{T\}) > 0$.
Therefore, the proof will be complete if we can show that $\tilde{\xi}$ satisfies
\eqref{30}(ii), that is, $\tilde{\xi}_t[0,t) = 0$ for all $t \geq 0$.
The proofs of this equality for the cases $t\in [0,T)$ and $t \in
[T_1, \iy)$ follow exactly as in Case 1.

For the intermediate case, fix $t\in[T,T_1)$ and $y<t$. It remains to show that $\tilde{\xi}_t[0,y]
= 0$.   Observe that  since $\xi_t[0,t) = 0$ for all $t \geq
0$ by \eqref{30}(ii)  and
$y <t < T_1$, we have in  particular that  $\xi_t[0,y]=\xi_{T_1} [0,y]
= 0$.
Therefore, Lemma \ref{lem1}(2) and \eqref{sp-char2} imply
\begin{equation}
\label{psi-eqn}
\inf_{s \in [0,T_1-t]} \psi^t_y (s) = \psi^t_y (T_1-t) \leq 0,
\end{equation}
where recall the notation  $f^T(\cdot) = f(T+\cdot) - f(T)$ from
\eqref{shift-f}.
We now show that the
relation \eqref{psi-eqn} also holds when
 $\psi_y$ is  replaced everywhere by $\tilde{\psi}_y$.
This would conclude the proof of Case 2  because then, due to
 the already verified property
 that  $\tilde{\xi}_{T_1}[0,y] = 0$ and \eqref{sp-char3},  another application of Lemma \ref{lem1}(2) would  imply that
$\tilde{\xi}_t[0,y] = 0$.
To this end, we write
\begin{equation}
\label{ineq1-case2}
\tilde{\psi}^{t}_y (s) = \psi^t_y (s)  + \rho^t (s) - \tilde{\rho}^{t}(s),  \qquad
s \in [0,T_1 -t].
\end{equation}
By definition,  $\tilde{\rho}(T_1) = \rho(T_1)$, and so
$\rho^t(T_1 - t) - \tilde{\rho}^{t} (T_1 - t) = \tilde{\rho} (t) - \rho (t) \leq
0.$  Thus,  $\tilde{\psi}_y^{t}(T_1 - t) \leq \psi^t_y
(T_1-t)$ which,  together with \eqref{psi-eqn},  implies
\begin{equation}
\label{ineq2-case2}
\inf_{s \in [0,T_1-t]} \tilde{\psi}^{t}_y (s) \leq
\tilde{\psi}^{t}_y(T_1 - t) \leq \psi^t_y (T_1-t)  = \inf_{s \in [0,T_1-t]} \psi^{t}_y (s) \leq 0.
\end{equation}
To conclude the proof, we show that the first inequality in
\eqref{ineq2-case2} can be replaced by  equality.
Lemma \ref{lem1}(1), the explicit expression for $\Gam_1$ in
\eqref{20}  and the first inequality in \eqref{ineq2-case2} imply $\tilde{\xi}_{T_1} [0,y] =
\Gam_1(\tilde{\xi}_t [0,y] + \tilde \psi^{t})(T_1-t)$ $= \tilde
\psi^t_y (T_1-t) - \inf_{s \in [0,T_1-t]} \tilde \psi^t_y (T_1 - s)$.  Since we
showed above that $\tilde{\xi}_{T_1}[0,y]= 0$,
this completes the proof of Case 2, and hence of the proposition.
\qed

\subsection{Fluid Models for Policies that Prioritize by Job Size}
\label{sec33}

We now describe two variants of a scheduling policy where priority is
determined by the job size or processing requirement, where by `processing
requirement'  one refers to the time it takes a server,
when operating at unit rate, to complete processing the job.
In both these systems,
jobs arrive into an infinite buffer served by a single server,  with their processing
requirements known in advance.
The server works according to a  rule that, at any time, the server gives
priority to the job that  has the smallest processing requirement.
As mentioned in the introduction,
 the non-preemptive version of the policy, where the service of a job
 is  not interrupted by the arrival of a new job (that has a smaller
 size),  is referred to as
\textit{shortest job first} (SJF) and the preemptive version of the
policy is called \textit{shortest remaining
  processing time} (SRPT).

The description  of the data for the fluid model  is quite similar
to that of the  FIFO discipline discussed  in Section \ref{sec32},
except that we now take the mass to have the meaning of amount of
work, rather than the number of jobs arrived.
More precisely, as in Section \ref{sec31}, we suppose that we are given a measurable locally integrable
function $\la:\R_+\times\R_+\to\R_+$ that admits the following
interpretation: during the time interval $(t, t+dt)$, $\la(t,y)dydt$
customers arrive with job size
in the interval $(y,y+dy)$. Expressed in terms of work, we can say
that  $y\la(t,y)dydt$ represents the amount of work that  arrived in the interval  $(t, t+dt)$,
due to  jobs with size in $(y, y+dy)$.  Thus, the total arrived
workload of jobs of different sizes is captured by the measure-valued
path $\al$, defined by
\[
\almod_t[0,x]=\int_{[0,t]\times[0,x]}y\la(s,y)dsdy,\, t\in\R_+,x\in\R_+.
\]
As before, we assume that the distribution of mass in the queue in
terms of job sizes prior to zero is
captured by the measure $\xi_{0-}$ and let $\al = \xi_{0-} + \almod$,
and we also assume that we are given $\mu \in \D_{\R}^\up$, where
 $\mu(t)$ denotes the potential amount of work that the server can
process in the interval  $[0,t]$.
Denote by $\xi_t[0,x]$ the amount of work in the buffer, due to jobs
whose processing requirements lie within $[0,x]$, and let  $\beta_t[0,x]$ represent
the amount of  work (and not number of jobs) processed by the server
for the same class of jobs. Then, we expect the fluid models for both
SJF and SRPT to satisfy equation \eqref{61}.
The equations that describe the probabilistic model are presented in Section~\ref{D-sjf}.  As
shown there, the state descriptors for the stochastic SJF model satisfy
the same  relation in terms of $\Th$; see \eqref{90}.
This makes the state descriptor for  the workload in the
SJF model particularly easy to analyze,  although establishing
the limit of the state of the number of jobs in the SJF
system  is  more involved. In the case of SRPT, additional considerations
are required to deal with a certain error term.

\section{Convergence and characterization of limits}\label{s-conv}

We now use the tools introduced above to describe the
queueing models associated with three scheduling policies,
and establish convergence of the queueing model under the
 LLN scaling to the fluid models described in Section \ref{sec3}.
The EDF policy is considered in Section \ref{D-edf} and the SJF and
SRPT policies in \ref{D-sjf}, respectively.

\subsection{Earliest-Deadline-First Convergence Results}\label{D-edf}

In Section \ref{subs-edfmodel}, we introduce the primitive processes
that describe the stochastic hard EDF model,  and form
the equations governing the dynamics.
 The latter are analogous,  but
not identical, to the fluid model equations introduced in Section \ref{subsub-edfdesc}.
In Section \ref{subs-edffluid} we introduce the fluid scaling and state
the main convergence result, Theorem \ref{th2}.
The proof of Theorem \ref{th2}, which is given in Section
\ref{subs-edfpf},
builds on tightness results that are established in Section
\ref{subs-edftight}.  The soft EDF
model is easier to analyze using our MVSP. Indeed, as explained in
Remark \ref{rem-softEDF},
convergence of the sequence of scaled stochastic soft
EDF models to its corresponding fluid limit also follows as an immediate corollary
of Theorem \ref{th2}.

\subsubsection{Equations Governing the Stochastic Model}
\label{subs-edfmodel}

We recall the verbal description of the EDF queueing model given in
Section
\ref{subsub-edfdesc}.  To describe its dynamics precisely, let
the scaling parameter be denoted by $N\in\N$;  we
refer to the queueing model corresponding to $N$ as the {\it $N$-system},
or, for simplicity, the {\it system}.
 The random variables and stochastic processes introduced below are defined on a
common probability space $(\Om, \calF, \p)$.
The model primitives that determine the
dynamics of the $N$-system consist of a measure-valued arrival process
$\almod^N$, real-valued processes
$S$ and $\mu^N$,  that together describe the service, and a measure
$\initxiN$ that captures the state of the buffer just prior to
zero.
For $t, x \geq 0$,  let $\almod_t^N[0,x]$ denote the number of customers that have arrived during
the time interval $[0,t]$ with deadlines in $[0,x]$.  This does not
include customers that are counted in the measure
$\initxiN$, where $\initxiN[0,x]$ represents the customers
present in the buffer at time $0$ (not counting the customer in
service) with deadlines in $[0,x]$.
We also let
\begin{equation}
\label{almod-eqn}
\al^N = \almod^N + \initxiN.
\end{equation}

The model for service is based on two stochastic elements: the
integer-valued potential service process
$S$ (independent of $N$)  that captures the service requirements of
customers, and the  cumulative effort process $\mu^N$ that
allows for variable rate of service, both of which have sample paths
in $\D_\R^\up$.
Specifically, the process $S$ is assumed to be
a non-delayed renewal counting process with inter-renewal times
distributed according to the service times of the customers.
We assume that the inter-renewal distribution of $S$ has mean 1 (there is no
loss of generality because of  the way we will employ the process
$\mu^N$, as explained below). By assumption, $S(0)=1$,
and given $t\ge0$, $S(t)-1$ represents the number of jobs completed by the time the server
has been occupied for $t$ units of time, assuming service is provided at rate 1.
 Let $B^N$ be a c\`{a}dl\`{a}g $\{0,1\}$-valued process describing the
state of the server, namely,
\[
B^N(t):=
 \begin{cases}
 1 & \text{if the server is busy at time $t$,}\\
 0 & \text{otherwise},
 \end{cases}
\]
and let $B^N (0-)$ be the initial state of the server.
We allow the rate of service to vary over time, and so the actual
number of job  completions by time $t$ is given by $S(T^N(t))-1$, where
\begin{equation}\label{63}
T^N(t):=\int_{[0,t]}B^N(s)d\mu^N(s), \qquad t \geq 0,
\end{equation}
represents the cumulative effort spent by the server in $[0,t]$.

 The state of the buffer is described by the process $\xi^N$, which has
 sample paths in $\D_\calM$.   Analogous to $\initxiN$, for $t, x\geq
 0$, $\xi_t^N[0,x]$
represents the number of customers that are in the
buffer at time $t$ (not counting the customer in service)
and have deadline within $[0,x]$.
Note that the total number of customers in the system at time $t$
(including those in the queue and the one in service) is then  given
by $\xi^N_t[0,\iy)+B^N(t)$.
The left end of the support of $\xi^N_t$ will play an important role
in the analysis.  We denote
\begin{equation}\label{71}
\sigma^N(t) :=  \min \supp[\xi^N_t], \qquad t \geq 0.
\end{equation}

Auxiliary processes that help describe the dynamics of the system are
the measure-valued processes $\beta^{s,N}$, $\beta^{r,N}$, $\beta^N$,
all of whom have  sample paths in
$\D_\calM^\up$, and the real-valued processes $\rho^N$ and $\io^N$.
For $t, x \geq 0$, the cumulative number of jobs with deadline in $[0,x]$ that
 started service (and possibly departed from the system) before time $t$
is given by $\beta^{s,N}_t[0,x]$, and those with deadline in $[0,x]$ that
  reneged from the system because their deadlines elapsed before they could be admitted into the
service before time $t$ is given by $\beta^{r,N}_t[0,x]$.  If we set
\begin{equation}\label{37}
\beta^N=\beta^{s,N}+\beta^{r,N},
\end{equation}
then $\beta^N_t[0,x]$ represents the total number of customers with deadlines
in $[0,x]$ that have left the buffer by time $t$.
The reneging count process is denoted by $\rho^N$ and has sample paths in $\D_\R^\up$.
For $t\ge0$, $\rho^N(t)$ is the total number of
customers that have reneged in the time interval $[0,t]$, namely
\begin{equation}\label{33}
\rho^N(t)=\beta^{r,N}_t[0,\iy)=\beta^{r,N}_t[0,t], \qquad t \geq 0,
\end{equation}
where the last equality captures the fact that
jobs in the buffer (who are still awaiting
service) renege only when the current time exceeds
their deadline.  In particular, this implies
\begin{equation}\label{34}
\beta^{r,N}_t[0,x]=\rho^N (t\w x),   \qquad  t, x \geq 0,
\end{equation}
and thus the measure-valued process $\beta^{r,N}$ can be recovered from
the real-valued process $\rho^N$.
Moreover, the total number of jobs
sent to service by time $t$ satisfies
\begin{equation}
  \label{64}
  \beta^{s,N}_t[0,\iy)=S(T^N(t))-1+B^N(t), \qquad t \geq 0.
\end{equation}
Next, analogous to the process $T^N$ defined in \eqref{63}, we let
\begin{equation}
  \label{65}
  \iota^N(t):=\int_{[0,t]}(1-B^N(s))d\mu^N(s)=\mu^N(t)-T^N(t), \quad t
  \geq 0.
\end{equation}
In the special case $\mu^N_t=t$, $t\ge0$, the process $\io^N$
represents the cumulative idle time
of the server; in general it is the total lost service effort due to idleness.
Finally, it will be useful to denote
\begin{equation}\label{66}
e^N(t):=\beta^{s,N}_t[0,\iy)-T^N(t), \quad t \geq 0,
\end{equation}
which will play the role of an error term.

We now write several identities that follow directly from the above
description of the processes and
the EDF policy.  In these equations, $x,t\in\R_+$ are arbitrary.
First, note that
\begin{align}
 \xi^N_t[0,x]&=\alpha^N_t[0,x]-\beta^{s,N}_t[0,x]-\rho^N(t \w x), \label{qmeasN}
\\
\beta^{s,N}_t[0,x]&=\mu^N(t)-\beta^{s,N}_t(x,\iy)-\io^N(t)+e^N(t),
\label{servmeasN}
\end{align}
where the first is the balance equation for jobs with deadline in $[0,x]$, and the second
is immediate from \eqref{65} and \eqref{66}.
Now, \eqref{33} and \eqref{34} imply that
$\beta^{r,N}_t(x,\iy)=\rho^N(t)-\rho^N(t \w x)$.  Combining this with
\eqref{qmeasN}, \eqref{servmeasN} and \eqref{37}, we obtain
\begin{equation}\label{67}
\xi^N_t[0,x]=\al^N_t[0,x]
-\mu^N(t)-\rho^N(t)-e^N(t) +\beta^{N}_t(x,\iy)+\io^N(t). \end{equation}
Sending $x\to\iy$ in \eqref{67},  we also have
\begin{equation}
  \xi^N_t[0,\iy)=\alpha^N_t[0,\iy)-\mu^N(t)-\rho^N(t)-e^N(t)
  +\iota^N(t).
  \label{balanceN}
\end{equation}
Next, the EDF priority rule dictates that when a job is sent to the server,
no job in the queue has a smaller deadline. Moreover, the non-idling property of the server
implies that when the server idles no jobs are present in the
buffer.
These facts can be expressed by
\begin{align}
& \int_{[0,\iy)}\xi^N_t[0,x]d\beta^{s,N}_t(x,\iy)=0, \label{edf}
 \\
 &\int_{[0,\iy)}\xi_t^N[0,\iy)d\iota^N(t)=0. \label{non-idling}
\end{align}
By \eqref{33}, \eqref{65} and \eqref{66},
\begin{equation}\label{40}
\beta^N_t[0,\iy)+\io^N(t)=\mu^N(t)+\rho^N(t)+e^N(t).
\end{equation}

Moreover, the reneging behavior of jobs is such that at any given
time $t$, no jobs
with deadline less than or equal to $t$ are in the queue;
and jobs that renege do so exactly at the time of their deadline.
These two facts imply the identities
\begin{align}
\label{69}
& \xi^N_t[0,t]  =  0,
\\&\int_{[0,\iy)}\ind_{\{\sig^N (t-)>t\}}d\rho^N(t)=0.
\label{renegN}
\end{align}
Note that we can deduce that \eqref{edf} holds for $\beta^{r,N}$ as well. Indeed, fix $x$.
It follows from \eqref{33} and \eqref{34} that $\beta^{r,N}_t(x,\iy)=\rho^N(t)-\rho^N(t \w x)$,
and so the measure $d\beta^{r,N}_t(x,\iy)$ charges only a subset
of the form $\{t_k\}$ of $(x,\iy)$.
For each such $t_k$, $\xi^N_{t_k}[0,x]=0$ by \eqref{69}, since $t_k>x$. Thus
\eqref{edf} is valid for $\beta^{r,N}$. Since
$\beta^N=\beta^{s,N}+\beta^{r,N}$ by \eqref{37}, we have
\begin{equation}
  \label{26}
  \int_{[0,\iy)}\xi^N_t[0,x]d\beta^{N}_t(x,\iy)=0.
\end{equation}

\begin{remark}
\label{rem-stochEDF}
{\em
An observation that will be useful in establishing the fluid limit
theorem is that  equations \eqref{qmeasN}--\eqref{26} are closely
related to the fluid model equation \eqref{32}.
Indeed, comparing  equations \eqref{67}, \eqref{26},
\eqref{non-idling} and \eqref{40} with properties 1--4 in  Definition
\ref{def-IDSM} of the \MVSP, and noting that
$\mu^N + \rho^N + e^N$ is non-decreasng by \eqref{40},
it follows that
\begin{equation}
\label{stoch-IDSMrel}
(\xi^N, \beta^N, \iota^N) = \Th (\alpha^N, \mu^N +
\rho^N + e^N).
\end{equation}
This is analogous to the fluid model equation \eqref{32}(i),
except for the presence of the additional error term $e^N$.
Further, \eqref{69} is the exact analog of equation \eqref{32}(ii),
and \eqref{renegN} is similar to \eqref{32}(iii), with the notable
difference of having $\sigma^N(t-)$ in the former and $\sigma(t)$ in
the latter.
}
\end{remark}

\subsubsection{The EDF Fluid Limit Theorem}
\label{subs-edffluid}

For measure-valued processes $\zeta =
\alpha, \beta, \beta^s, \beta^r, \xi$ and real-valued processes $\gamma  =
\mu, \iota, \rho, e$, set
\begin{align}
\label{72}
\bar{\zeta}^N_t (B) := \frac{\zeta_t^N(B)}{N},  \quad B \in {\mathcal
  B}(\R_+);  \qquad \quad \bar{\gamma}^N (t) := \frac{\gamma^N(t)}{N}, \quad t
\geq 0.
\end{align}
There is no need to define a new version of the process $\sigma^N$
defined in \eqref{71}, because this process plays the same role for the scaled
processes, in the sense that
$\sigma^N(t)=\min \supp[\bar\xi^N_t]$, $t \geq 0$.

As observed in Remark \ref{rem-stochEDF}, the stochastic model (and
therefore its scaled versions) satisfies equations that are close to the equations in \eqref{32}.
By Theorem \ref{th1}, these are equivalent to the fluid model
equations \eqref{31} when the
fluid primitives  $\alpha$ and $\mu$ satisfy Assumption \ref{as-add}.
Thus, we now impose fairly general assumptions on
 the scaled  stochastic primitives $\bar{\alpha}^N$ and
 $\bar{\mu}^N$  that ensure that the limit  satisfies Assumption
\ref{as-add}.
Recall that the symbol `$\To$' denotes convergence in distribution.
Specifically, if $\pi^N$ and $\pi$ are $\D_\calM$-valued random
variables, $\pi^N\To\pi$ means convergence in distribution in the Skorohod
topology on c\`{a}dl\`{a}g functions over $(\calM, d_{\calL})$.
We now state our assumptions.

\begin{assumption}\label{assume-xi0}
The following properties hold:
\begin{enumerate}
\item
The sequence  $\{\falphan\}$ converges in
  distribution to $\al$, where $\al$ is a
  (non-random) member of $\C_{\calM_0}^\up$ that satisfies
  Assumption \ref{as-add}(i).
\item
The sequence  $\{\bar\mu^N\}$ converges in distribution to  $\mu$,
where $\mu$ is a (non-random) element of $\C_\R^\up$ that has the form
\eqref{12}.
\end{enumerate}
\end{assumption}

\begin{remark}\label{rem4.1}
{\em In practice the limit $\al$ often takes the form  \eqref{11}
  specified in Assumption \ref{as-add}(i).
  For example, if the arrivals follow
a compound Poisson process with intensity function converging to   $\la$, where $\la$ is locally bounded,
and the relative deadlines are i.i.d.\ with a fixed distribution function that does not
charge zero, then mimicking the arguments
in \cite[Lemma~3.1]{atar-bis-kaspi} one can show that Assumption~\ref{assume-xi0}(i) holds.}
\end{remark}

\begin{theorem}
  \label{th2}
Suppose Assumption \ref{assume-xi0} holds, and for
the associated $(\al, \mu)$, let
$(\xi,\beta,\io,\rho)$ denote the unique solution of \eqref{32}
  (equivalently, the minimal solution of \eqref{30}).
  Then $(\xi,\beta,\io,\rho)$ lies in $\C_{\calM_0}\times\C_{\calM_0}^\up\times\C_\R^\up\times
  \C_\R^\up$ and
  $(\bar\xi^N,\bar\beta^N,\bar\io^N,\bar\rho^N)\To(\xi,\beta,\io,\rho)$.
\end{theorem}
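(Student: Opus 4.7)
The plan is to combine tightness of the scaled primitives, continuity of the map $\Th$ from Proposition~\ref{prop2.1b}, and uniqueness of the fluid solution guaranteed by Theorem~\ref{th1}, to show that every subsequential limit of $(\bar\xi^N,\bar\beta^N,\bar\io^N,\bar\rho^N)$ coincides with $(\xi,\beta,\io,\rho)$. Tightness of $(\bar\al^N,\bar\mu^N)$ is immediate from Assumption~\ref{assume-xi0}. Since every reneged job arrived with deadline in $[0,t]$, one has $\bar\rho^N(t)\le\bar\al^N_t[0,t]$, which, combined with monotonicity and the fact that jumps of $\bar\rho^N$ are of size at most $1/N$, yields tightness of $\{\bar\rho^N\}$ with every subsequential limit lying in $\C_\R^\up$. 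For the error term, write
\[
\bar e^N(t)=\tfrac{1}{N}\bigl(S(T^N(t))-T^N(t)\bigr)+\tfrac{B^N(t)-1}{N};
\]
since $T^N(t)/N\le\bar\mu^N(t)$ is stochastically bounded and $S$ is a mean-one renewal process, the FLLN gives $\bar e^N\To 0$. From the scaled version of \eqref{stoch-IDSMrel} and the continuity of $\Th$ (Proposition~\ref{prop2.1b}(1)), tightness of $(\bar\xi^N,\bar\beta^N,\bar\io^N)$ follows.

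Given an arbitrary subsequential limit $(\al,\mu,\rho^*,0,\xi^*,\beta^*,\io^*)$, the limit $(\al,\mu+\rho^*)$ lies in $\C_{\calM_0}^\up\times\C_\R^\up$, so Proposition~\ref{prop2.1b}(1) applied to \eqref{stoch-IDSMrel} yields $(\xi^*,\beta^*,\io^*)=\Th(\al,\mu+\rho^*)$, which is \eqref{32}(i); Proposition~\ref{prop2.1a} then ensures $(\xi^*,\beta^*,\io^*)\in\C_{\calM_0}\times\C_{\calM_0}^\up\times\C_\R^\up$. To verify \eqref{32}(ii), fix $t>0$ and $\eps\in(0,t)$, and pick $f_\eps\in\C_b(\R_+)$ with $\ind_{[0,t-\eps]}\le f_\eps\le\ind_{[0,t]}$; by \eqref{69}, $\lan f_\eps,\bar\xi^N_t\ran\le\bar\xi^N_t[0,t]=0$, and continuity of $\xi^*$ gives $\lan f_\eps,\xi^*_t\ran=0$, so $\xi^*_t[0,t-\eps]=0$; letting $\eps\downarrow 0$ and using $\xi^*_t\in\calM_0$ yields $\xi^*_t[0,t)=0$.

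The main obstacle is to establish \eqref{32}(iii), which requires passing the stochastic relation \eqref{renegN}---whose discontinuous dependence on $\sig^N(t-)$ does not fit directly into a continuous mapping framework---to the fluid analog $\sig^*(t)=t$ for $d\rho^*$-a.e.\ $t$. My proposed argument runs as follows. For any $t_0$ with $\sig^*(t_0)>t_0$, pick $\delta>0$ with $\xi^*_{t_0}[0,t_0+\delta]=0$ and then $\eta\in(0,\delta/4)$ small enough that, by continuity of $\xi^*$ in $\calM$ together with atomlessness of $\xi^*_t$ at $t_0+\delta$, $\sup_{t\in[t_0-\eta,t_0+\eta]}\xi^*_t[0,t_0+\delta]$ is arbitrarily small. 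Uniform convergence of $\bar\xi^N$ to $\xi^*$ on compacts (Skorokhod convergence to a continuous limit) combined with integer-valuedness of $\xi^N$ then forces $\xi^N_t[0,t_0+\delta]=0$ for all $t\in[t_0-\eta,t_0+\eta]$ and all $N$ large enough. A pathwise observation rules out any reneging in $(t_0-\eta,t_0+\eta)$: a hypothetical reneging job would have deadline $t_r\in(t_0-\eta,t_0+\eta)$ and be in queue throughout $[t_a,t_r)$ for some arrival time $t_a\le t_r$; at any $s\in[t_0-\eta,t_r)$ while still in queue, the job would contribute $\xi^N_s[\{t_r\}]\ge 1$ to $\xi^N_s[0,t_0+\delta]$ (since $t_r\le t_0+\eta<t_0+\delta$), contradicting the preceding display. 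Hence $\bar\rho^N$ is constant on $(t_0-\eta,t_0+\eta)$ for large $N$, and so is the limit $\rho^*$; i.e., $\rho^*$ is \emph{locally constant} at every point of $\{t:\sig^*(t)>t\}$. The set $U$ on which $\rho^*$ is locally constant is open, and since $\rho^*$ is continuous and constant on each connected component of $U$, one has $\bm^{\rho^*}(U)=0$, and in particular $\bm^{\rho^*}(\{\sig^*(t)>t\})=0$, which is \eqref{32}(iii). Theorem~\ref{th1} now forces $(\xi^*,\beta^*,\io^*,\rho^*)=(\xi,\beta,\io,\rho)$, and since every subsequential limit coincides with the fluid solution, the full sequence converges.
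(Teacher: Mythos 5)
Your architecture (tightness, continuity of $\Th$ via \eqref{stoch-IDSMrel}, uniqueness via Theorem~\ref{th1}) matches the paper's, and your arguments for $\bar e^N\To 0$, for \eqref{32}(i), and for \eqref{32}(ii) are fine. But two steps fail. First, boundedness ($\bar\rho^N(t)\le\bar\al^N_t[0,t]$), monotonicity, and jump size $\le 1/N$ do \emph{not} give tightness of $\{\bar\rho^N\}$: a nondecreasing function climbing from $0$ to $1$ via $N$ jumps of size $1/N$ crowded into an interval of length $1/N$ satisfies all three conditions yet has no $J_1$ limit. One needs a modulus-of-continuity bound, which the paper obtains from \eqref{qmeasN} and \eqref{69} in the form $\bar\rho^N(t+\delta)-\bar\rho^N(t)\le\bar\al^N_{t+\delta}[0,t+\delta]-\bar\al^N_t[0,t]$ (this is \eqref{tightness-frhon}); your pointwise bound alone does not imply this increment estimate.

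Second, the pivotal claim in your proof of \eqref{32}(iii), that uniform convergence of $\bar\xi^N$ plus integer-valuedness of $\xi^N$ forces $\xi^N_t[0,t_0+\delta]=0$ for $N$ large, is false. If $\sup_{t\in[t_0-\eta,t_0+\eta]}\xi^*_t[0,t_0+\delta]<\eps$, then for large $N$ one has $\bar\xi^N_t[0,t_0+\delta]<2\eps$, which only gives $\xi^N_t[0,t_0+\delta]<2\eps N$. Since $\eps$ is fixed before $N$ is chosen, $2\eps N\to\iy$, and integer-valuedness imposes no constraint --- $\xi^N_t[0,t_0+\delta]=\lfloor\sqrt N\rfloor$ is perfectly consistent with the convergence. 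So you cannot conclude that the $N$-system has no reneging near $t_0$, and $\bar\rho^N$ need not be locally constant there; the ``locally constant implies $\bm^{\rho^*}$-null'' part of your argument (which is correct in itself) therefore has no foundation. The paper instead bounds $\bar\rho^N(\tau+\delta)-\bar\rho^N(\tau)$ quantitatively through the telescoping identity \eqref{inc-rho} over a fine partition of $(\tau,\tau+\delta]$, splitting it into a term $C_{N,K}\le K\sup_s\bar\xi^N_s(\tau,\tau+\delta]$ (vanishing as $N\to\iy$ for fixed mesh $K$) and a term $D_{N,K}$ controlled by arrival increments (vanishing as $K\to\iy$); it also needs the Section Theorem argument of Lemma~\ref{lem6} to make the random time $\tau$ measurable, a subtlety your pathwise approach would have sidestepped had the estimate held.
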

\begin{remark}
{\em
Since the limits are continuous, the convergence stated above
holds also in the u.o.c.\ topology.}
\end{remark}

\begin{remark}
\label{rem-softEDF}
\em Theorem \ref{th2} also implies
convergence to the fluid limit under the soft EDF policy.  To see why,
consider a queueing model operating under the hard EDF policy over a time horizon $[0,T]$.
If we add the constant $T$ to all deadlines (of jobs initially in the
system as well as those that arrive during the interval $[0,T]$) then there is
no reneging (that is, $\rho \equiv 0$) and the hard and  soft versions of the policy
give rise to exactly the same state dynamics. Hence,  we obtain convergence of
the sequence of fluid scaled soft EDF models to
the limit given by $(\xi,\beta,\io)=\Th(\al,\mu)$.
\end{remark}

An outline of the proof is as follows.
We begin in Section \ref{subs-edftight} by showing that the rescaled
versions of $\Ups^N = (\alpha^N, \mu^N, \rho^N, e^N)$ is tight,
and that the scaled error term $e^N$ vanishes.  Then, in Section
\ref{subs-edfpf}, we show that
 given any convergent subsequence with limit $(\al, \mu, \rho, 0)$,
 the continuity  of the \MVSM established in Lemma
\ref{prop2.1b} and the representation \eqref{stoch-IDSMrel} together show that the
rescaled versions of the corresponding
$(\xi^N, \beta^N, \iota^N)$ converge to $\Th (\alpha, \mu +
\rho)$, thus establishing \eqref{32}(i).  To show uniqueness of the
limit, we then show that the remaining properties of \eqref{32} are
also satisfied and invoke the uniqueness stated in Theorem \ref{th1}.
Relation \eqref{32}(ii) essentially follows on taking limits in \eqref{69}.
Limits in \eqref{renegN} do not automatically yield
\eqref{32}(iii), and the proof of this requires additional estimates on
the reneging process.

\subsubsection{Tightness Results for the EDF Model}
\label{subs-edftight}

Recall that a sequence of processes with sample paths in $\D_\calS$, $\calS$ being a Polish
space, is said to be {\it $C$-tight} if it is tight and, in addition,
any subsequential limit has, with probability 1, paths in $\C_\calS$.

To establish tightness, we will  appeal to the following characterization of $C$-tightness
of processes with sample paths in $\D_{\R}$ \cite[Proposition
VI.3.26]{jacod-shiryaev}.

\begin{lemma}
\label{lem-ctight}
$C$-tightness of a sequence $\{X^N\}$ of $\D_{\R}$-valued random
elements is equivalent to the following two conditions:
\begin{itemize}
\item[C1.] The sequence of random variables
$\{\|X^N\|_T\}$ is tight for every fixed $T<\iy$;
\item[C2.] For every $T<\iy$,
$\eps>0$ and $\eta>0$ there exist $N_0$ and $\theta>0$ such that
\begin{equation}
\label{bd-ctight}
N\ge N_0 \text{ implies } \p(w_T(X^N,\theta)>\eta)<\eps,
\end{equation}
where
\[
w_T(f,\theta):=\sup_{0\le s<u\le s+\theta\le T}|f(u)-f(s)|.
\]
\end{itemize}
\end{lemma}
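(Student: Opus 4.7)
The statement is the standard Jacod--Shiryaev characterization of $C$-tightness in $\D_\R$, so the plan is to give a self-contained argument rather than discover a new one. I would organize the proof as a two-way implication.

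For the necessity direction (C-tight $\Rightarrow$ C1 and C2), I would first note that if $\{X^N\}$ is $C$-tight, then for any $\varepsilon > 0$ there is a compact set $K \subset \D_\R$ with $\p(X^N \in K) \geq 1 - \varepsilon$ for all $N$, and moreover $K$ can be chosen inside $\C_\R$. Applying the Arzelà--Ascoli theorem to the closure of $K$ inside $\C_\R$ (equipped with uniform-on-compacts topology) yields both a uniform bound $\sup_{f \in K} \|f\|_T < \infty$ and a uniform modulus-of-continuity estimate $\sup_{f\in K} w_T(f,\theta)\to 0$ as $\theta \downarrow 0$. These give C1 and C2 respectively. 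One slight subtlety is that convergence $f_n \to f$ in $\D_\R$ with $f \in \C_\R$ is equivalent to $\|f_n - f\|_T \to 0$ for every $T$, so restricting to continuous limits legitimately produces u.o.c.\ compactness.

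For the sufficiency direction (C1 and C2 $\Rightarrow$ $C$-tight), I would invoke the classical tightness criterion in $\D_\R$ (e.g.\ Billingsley or Ethier--Kurtz): tightness is equivalent to C1 together with control of the càdlàg modulus
\[
w'_T(f,\theta) := \inf_{\{t_i\}} \max_i \sup_{s,u \in [t_{i-1},t_i)} |f(u) - f(s)|,
\]
where the infimum ranges over partitions $0 = t_0 < \cdots < t_n = T$ with $\min_i (t_i - t_{i-1}) > \theta$. The key observation is the pointwise inequality $w'_T(f,\theta) \leq w_T(f,\theta)$ (use the trivial partition with mesh just above $\theta$), so C2 immediately upgrades to control of $w'_T$, yielding tightness in $\D_\R$. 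To upgrade tightness to $C$-tightness, I would then show that any subsequential limit $X$ is almost surely continuous: since the jump functional $\Delta_T(f) := \sup_{t \leq T} |f(t) - f(t-)|$ satisfies $\Delta_T(f) \leq w_T(f,\theta)$ for every $\theta > 0$ when $f \in \D_\R$, and $f \mapsto \Delta_T(f)$ is upper semicontinuous on $\D_\R$ (at continuous points), Fatou's inequality and C2 give $\p(\Delta_T(X) > \eta) \leq \liminf_N \p(w_T(X^N,\theta) > \eta) < \varepsilon$ for $\theta$ small, forcing $\Delta_T(X) = 0$ a.s.

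The main obstacle I anticipate is the final semicontinuity step: the jump functional $\Delta_T$ is not continuous on all of $\D_\R$, so one must either argue via Skorokhod representation (passing to an a.s.\ convergent realization of the subsequence) or verify upper semicontinuity carefully using the fact that Skorokhod convergence preserves jump sizes up to time reparametrization. Given that the lemma is cited directly from \cite{jacod-shiryaev}, the cleanest presentation is simply to reference Proposition VI.3.26 there, noting that the two conditions C1 and C2 coincide verbatim with the hypotheses of that result, and omit the detailed derivation.
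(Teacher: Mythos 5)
The paper does not prove this lemma at all; it is stated and immediately attributed to \cite[Proposition VI.3.26]{jacod-shiryaev}, exactly as you ultimately recommend in your last sentence. So your bottom-line disposition (cite Jacod--Shiryaev and move on) matches the paper.

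That said, your sketch of the ``necessity'' direction contains a genuine error worth flagging. You assert that if $\{X^N\}$ is $C$-tight then ``$K$ can be chosen inside $\C_\R$.'' This is false in general: $C$-tightness constrains the \emph{limits} to be continuous, not the $X^N$ themselves, and tightness requires the compact set $K$ to contain the actual sample paths with high probability. A deterministic example such as $X^N = N^{-1}\ind_{[1,\infty)}$ is $C$-tight (it converges to the zero function), yet every suitable compact set must contain these discontinuous paths, so no compact subset of $\C_\R$ works. Consequently you cannot apply Arzel\`a--Ascoli directly to get a bound on the uniform modulus $w_T$. The correct route (as in Jacod--Shiryaev) passes through tightness in $\D_\R$, which gives control of the c\`adl\`ag modulus $w'_T$, and then uses the ``$C$'' part of $C$-tightness in the form that the maximal jump $\sup_{t\le T}|\Delta X^N_t|$ tends to $0$ in probability; the uniform modulus $w_T$ is then controlled by $w'_T$ plus the maximal jump, rather than by compactness in $\C_\R$.

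The sufficiency direction is essentially sound. Two small inaccuracies: the inequality you want is $w'_T(f,\theta)\le w_T(f,2\theta)$ (or $w_T(f,\theta+\epsilon)$) rather than $w'_T(f,\theta)\le w_T(f,\theta)$, obtained by tiling $[0,T]$ with intervals of length just above $\theta$; and the jump functional $\Delta_T$ is not upper semicontinuous on all of $\D_\R$, so the last step is best carried out via Skorokhod representation, as you anticipate. Neither of these is fatal, but the necessity sketch as written would not survive scrutiny, and since the paper simply cites the reference, the safest fix is to do the same and drop the sketch.
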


\begin{lemma}
  \label{lem-9}
  The sequence $\bar{\Ups}^N \doteq (\falphan,\fmun,\frhon,\fen), N\in\N,$
is relatively compact in $\D_{\calM}\times \D^3_\R$ and
each of the components above is $C$-tight. Moreover,  $\bar e^N\To0$.
\end{lemma}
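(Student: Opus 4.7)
The plan is to verify the tightness criterion of Lemma \ref{lem-ctight} separately for each real-valued component, invoke Assumption \ref{assume-xi0} directly for the measure-valued one, and then combine to obtain joint $C$-tightness in the product space. Two of the four components are immediate: Assumption \ref{assume-xi0} yields $\falphan \To \al \in \C_{\calM_0}^\up$ and $\fmun \To \mu \in \C_\R^\up$, and a convergent sequence whose limit has continuous sample paths is automatically $C$-tight. Thus only $\frhon$ and $\fen$ require work; and since $\fen \To 0$ would simultaneously supply the last conclusion of the lemma and its own $C$-tightness, it suffices to establish (i) $\fen \To 0$ uniformly on compacts and (ii) $C$-tightness of $\frhon$.

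For (i), combining \eqref{64}--\eqref{66} gives $e^N(t) = S(T^N(t)) - 1 + B^N(t) - T^N(t)$. The contribution of $B^N(t)/N$ is bounded by $1/N$. For the renewal contribution, \eqref{63} implies $T^N \le \mu^N$, so $T^N(t)/N \le \fmun(T)$ for $t \le T$, and $\{\fmun(T)\}$ is tight by Assumption \ref{assume-xi0}(ii). The functional strong law of large numbers for the mean-one renewal process $S$ yields $\sup_{u \in [0, M]}|S(Nu) - Nu|/N \to 0$ almost surely for each fixed $M$, and a routine $M$-truncation argument using the tightness of $\fmun(T)$ then delivers $\|\fen\|_T \To 0$.

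For (ii), the key observation is that a job which reneges in a time window $(t_1, t_2]$ must have deadline in that window, and since its deadline cannot be earlier than its arrival time, must already be in the system (either initially, or via the arrival stream) by time $t_2$; in view of \eqref{almod-eqn}, this yields
\[
\rho^N(t_2) - \rho^N(t_1) \le \alpha^N_{t_2}(t_1, t_2], \qquad 0 \le t_1 \le t_2.
\]
Taking $t_1 = 0$, $t_2 = T$ gives $\|\frhon\|_T \le \falphan_T[0, \infty)$, which is tight by Assumption \ref{assume-xi0}(i), establishing condition C1 of Lemma \ref{lem-ctight}. The same bound also yields
\[
w_T(\frhon, \theta) \le \sup_{0 \le t_1 < t_2 \le T,\, t_2 - t_1 \le \theta} \falphan_{t_2}(t_1, t_2],
\]
so it remains to control the right-hand side. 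Using the $C$-tightness of $\{\falphan\}$ together with the Skorokhod representation, one may assume $\sup_{t \in [0, T]} d_{\calL}(\falphan_t, \al_t) \to 0$ almost surely; then \eqref{comp-dmdk}, combined with the atom-free property $\al_t \in \calM_0$, upgrades this to $\sup_{t \in [0, T]} \sup_x |\falphan_t[0, x] - \al_t[0, x]| \to 0$. Bounding $\falphan_{t_2}(t_1, t_2]$ by twice this uniform error plus $\al_{t_2}(t_1, t_2] \le \sup_{x \ge 0} \al_T(x, x+\theta]$, and invoking the uniform continuity of the continuous nondecreasing function $\al_T[0, \cdot]$ on $[0,T]$, one verifies C2 for $\frhon$.

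The principal obstacle is the final step of (ii): transferring the measure-valued Skorokhod convergence of $\falphan$ into a uniform bound on the arrival count over small diagonal strips $(t_1, t_2]$ with $t_2 - t_1 \le \theta$. This is precisely where the atom-free property built into Assumption \ref{assume-xi0}(i) plays a crucial role, since without it the residual oscillation $\sup_x \al_T(x, x+\theta]$ need not vanish as $\theta \to 0$, and the bound on $w_T(\frhon, \theta)$ would fail.
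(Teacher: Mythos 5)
Your proposal is correct and follows the same overall architecture as the paper: establish $C$-tightness component-by-component and invoke $\fen\To0$ via the functional LLN for the renewal process $S$ together with the domination $T^N\le\mu^N$. The one place where your route diverges is the bound for $\frhon$. The paper substitutes $(x,t)=(t,t)$ and $(x,t)=(t+\delta,t+\delta)$ into \eqref{qmeasN}, uses \eqref{69} and monotonicity to arrive at $\frhon(t+\delta)-\frhon(t)\le\faln_{t+\delta}[0,t+\delta]-\faln_t[0,t]$, and then splits this into the two terms $\faln_T(t,t+\delta]+w_T(\faln[0,\infty),\delta)$, each of which it controls through $C$-tightness of $\{F_{\faln_T}\}$ and $\{\faln[0,\infty)\}$ respectively. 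You instead derive the sharper bound $\rho^N(t_2)-\rho^N(t_1)\le\al^N_{t_2}(t_1,t_2]$ directly (this does follow formally: by \eqref{33}--\eqref{34}, $\rho^N(t_2)-\rho^N(t_1)=\beta^{r,N}_{t_2}(t_1,t_2]\le\beta^N_{t_2}(t_1,t_2]\le\al^N_{t_2}(t_1,t_2]$, using $\al^N=\xi^N+\beta^N$), and then pass to a Skorokhod representation, \eqref{comp-dmdk}, and the atom-free structure of $\al_T$ to kill the diagonal oscillation. Both mechanisms rest on the same point — that the atom-free limit $\al$ makes $\sup_x\al_T(x,x+\theta]\to0$ — so the difference is essentially one of packaging: the paper manipulates moduli of continuity on the pre-limit sequences, you pass through the a.s.\ realization. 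Either is fine; yours requires one additional appeal (Skorokhod representation) but in exchange avoids the two-term decomposition.
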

\proof   By \cite[Prop.\ VI
1.17]{jacod-shiryaev},  to establish the first assertion of the lemma, it suffices to
  establish the $C$-tightness of each of the sequences
$\{\falphan\}$, $\{\fmun\}$,
$\{\frhon\}$, and $\{\fen\}$.
The $C$-tightness of   $\{\falphan\}$ and $\{\fmun\}$ is a direct
consequence of  Assumption
\ref{assume-xi0}.

To show  $C$-tightness of $\{\frhon\}$,  fix $T < \infty$, and
for $t\in[0,T-\del]$, apply \eqref{qmeasN}, first with
$x = t$ and then with $(x,t)$ replaced by $(t+\del, t+\del)$,
and use \eqref{69} and the fact that $\beta^N, \al^N, \rho^N\in
\calD_{\calM}^\up$ to obtain
\begin{equation}\label{tightness-frhon}
0\le\frhon (t+\delta)-\frhon (t)\leq\faln_{t+\delta}[0,t+\delta] -
\faln_t[0,t] \leq \faln_T(t,t+\delta] +
w_T(\faln[0,\infty), \delta).
\end{equation}
Denoting by $F_{\faln_T}$ the map $x\mapsto\faln_T[0,x]$,
this implies
\begin{equation}
\label{rho-bd}
w_T(\frhon,\delta)\leq w_T(F_{\faln_T} ,\delta)+ w_T(\faln [0,\infty),
\delta).
\end{equation}
Assumption \ref{assume-xi0}(i) implies that both
$\{\faln[0,\infty)\}$ and $\{F_{\faln_T}\}$ are $C$-tight, and so by
Lemma \ref{lem-ctight}, conditions C1 and C2 hold with $X^N =
\faln[0,\infty)$ and $X^N=F_{\faln_T}$, $N \in \mathbb{N}$.  The bound
\eqref{rho-bd} then shows that conditions C1 and C2 of Lemma
\ref{lem-ctight} also hold with  $X^N = \frhon$, and so another
application of Lemma \ref{lem-ctight} shows  that $\{\frhon\}$ is $C$-tight.

Finally, we show that $\bar e^N\To0$.
Due to  \eqref{64}, \eqref{66} and the fact that $B^N$ takes values in $\{0,1\}$,
it suffices to show that $N^{-1}(S(T^N(t))-T^N(t))\To0$. By \eqref{63}, for fixed $t$,
\[
N^{-1}|S(T^N(t))-T^N(t)|\le N^{-1}\sup_{u\in
  [0,\mu^N(t)]}|S(u)-u|=\sup_{u\in [0,\bar\mu^N(t)]}\frac{|S(Nu)-Nu|}{N}\To0,
\]
using the functional law of large numbers for renewal processes and Assumption \ref{assume-xi0}(2).
This shows $\bar e^N\To0$.
\qed

\subsubsection{Proof of the Fluid Limit Theorem}
\label{subs-edfpf}

This section is devoted to the proof of  Theorem \ref{th2}.
By Lemma \ref{lem-9}, the sequence $\{\bar{\Ups}^N\}$
is tight and $\fen \Rightarrow 0$.
Fix a convergent subsequence of the sequence $\{\bar{\Ups}^N\}$
 relabel it as
$\{\bar{\Ups}^N\}$, and denote the limit by
$\Ups \doteq (\al,\mu,\rho,0)$, and note that it
takes values in  $\C_\calM\times\C_\R^3$ by  Lemma
\ref{lem-9}.
Since the components of $(\al, \rho, \mu)$ are continuous and $\bar{e}^N
\rightarrow 0$,  it follows that
$(\bar\al^N, \bar\mu^N + \bar\rho^N + \bar{e}^N)$ converges in
distribution  to $(\al,\mu + \rho)$.
Now, by \eqref{stoch-IDSMrel} and the fact that the \MVSM
is preserved under scaling (which is easily deduced from  Definition
\ref{def-IDSM}), we have
$(\fxin,\fbetan,\fiotan) = \Th (\falphan, \fmun+\frhon +
\bar{e}^N)$.   By the continuity property of $\Th$ established in Lemma
\ref{prop2.1b}  and the continuous mapping theorem, we then see that
$(\fxin, \fbetan, \fiotan)$ converges in distribution to $(\xi,\beta, \iota) :=
\Th(\al, \mu + \rho)$, and thus, \eqref{32}(i) holds.

To complete the proof of Theorem \ref{th2}, it suffices to show that
almost surely, $(\al,\xi,\beta,\mu,\rho,\io)$
satisfy \eqref{32}(ii)--(iii).
This suffices to prove Theorem \ref{th2} because
 Assumption \ref{assume-xi0} ensures that $(\al, \mu)$ satisfy
 Assumption \ref{as-add}, and hence, Theorem \ref{th1} shows that equations \eqref{32}(i)--{iii} uniquely
characterize the fluid model.
To prove \eqref{32}(ii), note that
 Proposition \ref{prop2.1a} and \eqref{32}(i) show that $(\xi,\beta,\iota)$ takes
values in  $\C_{\calM_0} \times \C_{\calM_0}^\up \times \C_{\R}^\up$.
In particular, $\xi$ is
continuous in $t$ and each $\xi_t$ has a continuous cumulative
distribution, and hence,
the convergence $\bar\xi^N\To\bar\xi$ implies
that $\fxin_t[0,t]\To\xi_t[0,t]$.  By \eqref{69}, this gives
$\xi_t[0,t]=0$ for every $t \geq 0$.

It only remains to prove \eqref{32}(iii).
We invoke Skorohod's representation theorem, by which we may assume
without loss of generality that
$(\al^N,\bar\mu^N,\bar\rho^N,\bar e^N)
\to(\al,\mu,\rho,0)$ and hence, that $(\bar\xi^N,\bar\beta^N, \iota^N) \to
(\xi, \beta, \iota)$,  almost surely.
Note that the relation \eqref{32}(iii) does not follow directly from
the convergence of $\fxin$ to $\xi$ because the convergence of
measures does not imply convergence of the infimum of their supports.
We need to  show that, with $\sig (t) := \min \supp [\xi_t]$ and $T>0$ fixed,
one has $\int_{[0,T]}\ind_{\{\sig (t)>t\}}d\rho(t)=0$ almost surely.
Equivalently,  by Fatou's lemma, we need to show that for every $\del>0$, the event
\begin{equation}\label{31}
E_0 :=\left\{\int_{[0,T]}\ind_{\{\sig(t)> t+\del\}}d\rho(t)>0\right\}
\end{equation}
has zero probability.
Let $m$,  $\nu$ and $\nu_s, s \geq 0$, be as in Assumption
\ref{as-add}, and recall that $m$ is locally bounded away from zero.
We fix $T < \infty$ and $\del\in (0, \del_0)$ where $\del_0 \in (0,1)$ is chosen to satisfy
\begin{equation}
\label{nus-est}
\nu_s[0, 2\del_0]< m(s) \text{ for all } s\in [0,T+1].
\end{equation}
The argument provided below is closely related to
the one provided in the proof of Proposition \ref{prop-impl2} to show
property \eqref{case1-cond}. One would like to argue that a similar property
must hold on the event $E_0$ of \eqref{31}. However, since the subsequential
limit (specifically, $\rho$ and $\xi$) is not a priori known to be a.s.\ deterministic,
measurability considerations must be taken into account
to adapt the idea from the deterministic setting of Proposition \ref{prop-impl2}.
In particular, one must allow for
the variable $t$ appearing in \eqref{case1-cond} to be a random variable.
The following lemma allows us to deal with this.

\begin{lemma}\label{lem6}
There exists a $[0,T)\cup\{\iy\}$-valued random variable $\tau$, such that
\begin{equation}
  \label{36}
  \p\,(E_0)= \p\,(E_1\cap E_2),
\end{equation}
where
\begin{equation}
  \label{38}
  E_1:=\{\tau<T,\ \sig(\tau)>\tau+\del\},\qquad
  E_2:=\{\rho (\tau+\eps) >\rho(\tau) \text{ for all } \eps>0\}.
\end{equation}
\end{lemma}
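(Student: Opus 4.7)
The plan is to define $\tau$ so that it takes values in the random set $B \cap R \cap [0, T)$ on $E_0$ and equals $\iy$ on $E_0^c$, where
\[
B := \{t \in [0, T]: \sig(t) > t + \del\}, \qquad R := \{t \geq 0: \rho(t + \eps) > \rho(t) \text{ for all } \eps > 0\}.
\]
With such a $\tau$, on $E_0$ one has $\tau < T$, $\sig(\tau) > \tau + \del$ (so $E_1$ holds) and $\rho(\tau+\eps) > \rho(\tau)$ for every $\eps > 0$ (so $E_2$ holds); on $E_0^c$, $\tau = \iy$ makes $E_1$ fail. Hence $E_0 = E_1 \cap E_2$ almost surely, which yields \eqref{36}.

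The first step is to show that $B \cap R \cap [0,T)$ is almost surely non-empty on $E_0$. Since $\{\frhon\}$ is $C$-tight by Lemma \ref{lem-9}, the subsequential limit $\rho$ has continuous paths almost surely, and I restrict to this full-probability event. Under continuity of $\rho$, the complement $R^c$ decomposes as the (at most countable) union of the maximal half-open intervals $[a_k, b_k)$ on each of which $\rho$ is constant, so $\bm^\rho(R^c) = \sum_k (\rho(b_k) - \rho(a_k)) = 0$. Continuity also yields $\bm^\rho(\{T\}) = 0$. Consequently, on $E_0$, $\bm^\rho(B \cap R \cap [0, T)) = \bm^\rho(B) > 0$, and in particular this random set is non-empty.

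The main technical step is then to realize such a $\tau$ measurably. Using Lemma \ref{lem2}, the joint graph
\[
\{(t, \om) \in [0, T) \times \Om : t \in B(\om) \cap R(\om)\} = \{(t,\om) : \xi_t(\om)[0, t + \del] = 0\} \cap \bigcap_{q \in \mathbb{Q}_+}\{(t,\om) : \rho(t+q, \om) > \rho(t,\om)\}
\]
is a Borel subset of $[0, T) \times \Om$, because the cadlag processes $\xi$ and $\rho$ are jointly measurable in $(t, \om)$ and the condition defining $R$ reduces to countably many conditions at rational $q$. A standard measurable selection theorem (e.g.\ Kuratowski--Ryll-Nardzewski), or alternatively an explicit discretization-based construction exploiting the cadlag structure of $\xi$ and $\rho$, then yields the required $\tau: \Om \to [0, T) \cup \{\iy\}$. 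I expect the main obstacle to be precisely this measurability step; the analytic content---that continuity of $\rho$ forces $\bm^\rho$-a.e.\ point to lie in the right support $R$, so $B \cap R \cap [0, T) \neq \emptyset$ whenever $\bm^\rho(B) > 0$---is the essential idea that makes $E_0 \subseteq E_1 \cap E_2$ work up to a null set.
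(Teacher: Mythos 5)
Your proposal follows essentially the same strategy as the paper: show the graph set $\Gam=\{(t,\omega)\in[0,T)\times\Om:\ t\in B(\omega)\cap R(\omega)\}$ is jointly measurable, and then apply a measurable section/selection theorem to produce $\tau$. Your explicit argument that $\bm^\rho(R^c)=0$ when $\rho$ is continuous (so that $\bm^\rho(B\cap R\cap[0,T))=\bm^\rho(B\cap[0,T])$ and hence $E_0\subseteq\pi(\Gam)$ up to a null set) is correct and is, in fact, an improvement in explicitness: the paper simply asserts that $\p(\text{proj}_\Om(\Gam))=\p(E_0)$ without elaborating, and your observation supplies the missing justification for the inclusion $E_0\subseteq\pi(\Gam)$. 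Your device of forcing $\tau=\iy$ on $E_0^c$ also neatly guarantees $E_1\cap E_2\subseteq E_0$ without having to prove $\pi(\Gam)\subseteq E_0$, which is not entirely obvious; this is a sound simplification.

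The one place where the proposal is imprecise is the choice of selection theorem. Kuratowski--Ryll-Nardzewski requires a measurable multifunction with nonempty \emph{closed} values, and $\omega\mapsto B(\omega)\cap R(\omega)\cap[0,T)$ is generally not closed-valued ($R$, the set of right-increase points of $\rho$, can be badly non-closed). What is available is that $\Gam$ is a Borel, indeed optional, subset of $[0,T)\times\Om$ for the natural augmented filtration $\calF_t=\sigma\{\xi_s,\beta_s,\io_s,\rho_s:\ s\le t\}$. This is exactly what the paper verifies, using Lemma \ref{lem2} (right-continuity of $t\mapsto\xi_t[0,t+a]$) to show $\{(t,\omega):\sig(t)>t+\del\}=\bigcup_n\{(t,\omega):\xi_t[0,t+\del+n^{-1}]=0\}$ is optional, and the continuity and adaptedness of $\rho$. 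With this in hand, the correct tool is the measurable section theorem for optional (or analytic) sets, e.g.\ Sharpe's Theorem A5.8 (or equivalently the Jankov--von Neumann uniformization theorem), which gives an $\calF_\iy$-measurable $\tau$ with $\llbracket\tau\rrbracket\subset\Gam$ and $\p(\tau<\iy)=\p(\pi(\Gam))$. If you then intersect $\Gam$ with $[0,T)\times E_0$ before applying the section theorem, you obtain exactly your version of $\tau$ (equal to $\iy$ on $E_0^c$), and the argument closes. So the gap is merely the citation of the wrong selection theorem; the underlying idea and the analytic content are correct.
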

The proof of Lemma \ref{lem6} is relegated to Appendix
\ref{app2}.
We proceed with the proof of the theorem.
To show that $\p (E_0) = 0$,  we will argue that,
given any random variable $\tau$ taking values in $[0,T)\cup\{\iy\}$,
$E_3$ holds almost surely  on $E_1$, that is,  $\p(E_1\cap E_3^c)=0$, where
\begin{equation}\label{35}
E_3:=\{\omega \in \Om: \text{there exists $\eps = \eps(\omega) >0$ such that
$\rho (\tau+\eps) =\rho (\tau)$}\}.
\end{equation}
Since $\{\tau<T\}\cap E_2=\{\tau<T\}\cap E_3^c$,
the result will then follow from \eqref{36}.

Towards this end we fix a random variable $\tau$ as in Lemma
\ref{lem6}.
As we justify below, given any $0\le a<b$,
the balance equation for customers with deadlines in $(a,b]$
gives
\begin{align}\label{inc-rho}
\frhon(b)-\frhon(a)+\bar\beta^{s,N}_b
(a,b]-\bar\beta^{s,N}_a(a,b]=\falphan_{b}(a,b]-\falphan_a
(a,b]+\fxin_a(a,b].
\end{align}
This relation can be obtained from \eqref{qmeasN} by substituting the four
choices $(a,a)$, $(a,b)$, $(b,a)$ and $(b,b)$ for $(t,x)$,
and using the fact that $\bar\xi^N_{b}(a,b]=0$ due to \eqref{69}.
Let $\delta_K=K^{-1}\delta$ for some $K\in\N$ and let
$I_k$, $k=1,\ldots,K,$ denote the following partition of $(\tau,
\tau+\delta]$:
\[
I_k=(t_{k-1},t_k],\qquad t_k:=\tau+k\del_K, \quad k = 1, \ldots, K.
\]
By (\ref{inc-rho}), for each $N$,
\[
\frhon (\tau+\delta) -\frhon(\tau)
=\sum_{k=1}^K(\frhon (t_k)-\frhon (t_{k-1}))\le C_{N,K}+D_{N,K},
\]
where
\[
C_{N,K}:=\sum_{k=1}^K\fxin_{t_{k-1}}(I_k),
\quad \mbox{ and } \quad
D_{N,K}:=\sum_{k=1}^K[\falphan_{t_k}(I_k)-
\falphan_{t_{k-1}}(I_k)].
\]

Now, note that
\[
C_{N,K}\le K\max_{s \in [\tau, \tau+\delta]}\fxin_s(\tau,\tau+\delta].
\]
Now, fix $K$ and send  $N\to\iy$. Recall that we have the almost sure  convergence
$\fxin\to\xi$, as $N\to\infty$, and that $\xi\in\C_{\calM_0}$. In
particular, every $\xi_t$ has a continuous distribution.  Therefore, we have
$$
\sup_{s\in[0, T]}d_\calL(\fxin_s, \xi_s)\to 0, \quad \text{as}\quad N\to\iy,
\quad \text{a.s.}
$$
This implies that
$$
\sup_{s\in[0, T]}\sup_{a\in\R_+}
|\fxin_s(a, \iy)-\xi_s(a, \iy)|\to 0, \quad \text{as}\quad N\to\iy,
\quad \text{a.s.}
$$
On the event $E_1$, it must be that $\xi_\tau[\tau,\tau+\delta]=0$,
which when combined with the relation $\xi_\tau[0,\tau] = 0$
that follows from property \eqref{32}(ii), implies
$\xi_\tau[0,\tau+\delta]= 0$. Thus, it
follows that $\ind_{E_1}\fxin_\tau[\tau,\tau+\delta]\to 0$ as $N\to\iy.$
Now, since \eqref{32}(i) holds, that is, $(\xi, \beta, \iota) = \Th
(\alpha, \mu + \rho)$,  \eqref{502-} of Lemma \ref{lem-altIDSP} and
the shift property of $\Gam_1$ stated in
Lemma \eqref{lem1}(1)  imply that
for every $t, z \geq 0$, $\xi_{t + \cdot}[0,z] = \Gam_1 ( \psi^{z,t})$,
where for $s \geq 0$,
\[ \psi^{z,t} (s) := \xi_t[0, z]+ \al^t_s[0,z]-\mu^t (s)-\rho^t(s).
\]
Here (as in Lemma \ref{lem1}) we have used the notation $\al^t_s[0,z]:=\hat\al_{t+s}[0,z]-\hat\al_t[0,z]$,
$\mu^t(s)=\mu(t+s)-\mu(t)$, $\rho^t(s)=\rho(t+s)-\rho(t)$.
Setting $z=t+\del$, we see from \eqref{11} of Assumption \ref{as-add}
  that for $s \geq 0$,
\begin{align*}
\al_s^t[0,t+\del]-\mu^t(s)=
\int_{t}^{t+s}\ind_{\{t+\del\geq u\}}\nu_{u}[0, t+\del-u]du-\int_t^{t+s} m(u)du,
\end{align*}
which is non-increasing for $s\in [0,\del_0]$ and $t \in[0,T-\delta_0]$ due to \eqref{nus-est}.
 For each $\omega$, applying the above with $t=\tau = \tau(\omega)$, and using the fact that
 $\xi_\tau[0,\tau+\delta] = 0$ on $E_1$, we see that
$\xi_t[0,\tau+\delta]=0$ for all $t \in [\tau, \tau+\delta]$ on
$E_1$.  As a result, for $K$ fixed, $\lim_{N\to\iy}
\ind_{E_1}C_{N,K}=0$ almost surely.

Next, for $K$ fixed, it follows from Assumption \ref{assume-xi0}(1)
that $D_{N,K}$ converges almost surely, as $N\to\iy$, to
\begin{align*}
D_K &:=\sum_{k=1}^K\Big(\int_{t_{k-1}}^{t_k}\ind_{\{t_k\geq u\}}\nu_u[0, t_k-u]du
-\int_{t_{k-1}}^{t_k}\ind_{\{t_{k-1}\geq u\}}\nu_u[0, t_{k-1}-u]du\Big)
\\
&= \sum_{k=1}^K\int_{t_{k-1}}^{t_k}\ind_{\{t_k\geq u\}}\nu_u[0, t_k-u]du
\\
&\leq (T+\delta) \sup_{s\in [0, T]}\nu_s[0, \del_K].
\end{align*}
By the assumption on $\nu_s, s \geq 0,$ in Assumption \ref{as-add},
$D_K\to0$ almost surely as $K\to\iy$.

Combining the estimates on $C_{N,K}$ and $D_{N,K}$,
it follows that, as $N\to\iy$, $\ind_{E_1}(\frhon
(\tau+\delta)-\frhon(\tau))\to0$ almost surely.
Now, since $\rho(t), t \geq 0,$ is a continuous process, the convergence
$\bar\rho^N\to\rho$ holds in the u.o.c.\ topology.
As a result, $\ind_{E_1}(\rho (\tau+\del)-\rho(\tau))=0$ almost
surely.
This shows \eqref{35}, which in turn establishes \eqref{32}(iii) and
hence, completes the proof.
\qed

\subsection{Convergence Results for Policies that use Job Size Priority}
\label{D-sjf}

We now turn to the SJF and SRPT policies.
In Section \ref{subsub-common} we introduce the primitive processes
that are common to both policies, and the assumptions that we make on them.
Then, in Sections \ref{subsub-SJF} and \ref{subsub-SRPT}
we introduce the state processes for the stochastic model and the
associated dynamic equations for the SJF and SRPT policies, respectively, and state and prove the fluid limit
convergence results, Theorems \ref{T-SJF} and \ref{T-SRPT}.

\subsubsection{Common Primitive Processes and  Auxiliary Processes}
\label{subsub-common}

   We now introduce and then state our assumptions on the
primitive processes that describe the incoming work for both the SJF and
SRPT policies, and also describe
some auxiliary processes that are useful for describing  both policies.
As before, we fix a scaling parameter $N$.
To describe the dynamics
in the  $N$-system, we introduce  measure-valued processes that keep track of the job sizes,
in addition to those that record the number of jobs.
We will say that a measure $\nu\in\calM$ is {\it discrete} if it is
a finite sum $\sum c_i\delta_{x_i}$ of point masses, where $x_i$ and $c_i$ are non-negative.
The weight that $\nu$ has at $x\in\R_+$ is, by definition,
$\nu(\{x\})$.

The job-size (resp., job-count) arrival process,
$\hat \al^{w,N}$ (resp., $\hat \al^{n,N}$) has sample paths in $\D_\calM^\up$.
Here, $w$ is a  mnemonic for {\it work} and $n$ for
{\it number}, where work and  job size is measured in terms of the time required to
process the job at a unit service rate.
For $t\ge0$, $\hat \al^{w,N}_t$
and $\hat \al^{n,N}_t$ are discrete, and given by
\begin{equation}\label{56}
\hat \al^{w,N}_t(dx)=\sum_{i=1}^\infty \ind_{\{t\ge\tau_i\}}W_i\delta_{W_i}(dx),
\qquad
\hat \al^{n,N}_t(dx)=\sum_{i=1}^\infty \ind_{\{t\ge\tau_i\}}\delta_{W_i}(dx),
\end{equation}
where $\{\tau_i\}=\{\tau^N_i\}$ is the sequence of $\R_+$-valued random variables
representing the arrival times of jobs into the system
and $\{W_i\}=\{W^N_i\}$ is the corresponding sequence of  $(0,\iy)$-valued random variables representing job
sizes.
Thus, $\hat \al^{w,N}_t[0,x]$ represents the amount of work that arrived in the interval $[0,t]$
due to  jobs with size less than or equal to $x$, and
$\hat\al^{n,N}_t[0,x]$ denotes the number
of such jobs.
Note that $\hat \al^{n,N}_t$ and $\hat \al^{w,N}_t$ can be recovered from each
other via the relations
\[
\hat \al^{w,N}_t[0,x]=\int_{[0,x]}y \hat \al^{n,N}_t(dy),
\]
and
\begin{equation}\label{41}
\hat \al^{n,N}_t[0,x]=\hat \al^{n,N}_t(0,x]=\int_{(0,x]}y^{-1}\hat \al^{w,N}_t(dy).
\end{equation}
Also, let  $m^N(t)$ denote the available rate of service at time $t$,
and let $\mu^N(t):=\int_0^tm^N(s) ds$.

As in the case of EDF, we will also  introduce some auxiliary processes that are useful
for the analysis.  Let  $B^N$ be a right-continuous process defined by
\[
B^N(t):=\begin{cases}
  1 & \text{if the server is busy at time $t$,}\\
  0 & \text{otherwise.}
\end{cases}
\]
The processes defined by  $T^N(t):=\int_0^tm^N(s) B^N(s)ds$ and $\io^{N}(t):=\int_0^tm^N(s)
(1-B^N(s))ds$, respectively, then
represent the work done by the server and the lost work.  Note that we
then have the relation
\begin{equation}
\label{mun}
\mu^N = T^N + \io^N.
\end{equation}
We will also introduce a state process
$\xi^{w,N}$ that represents the workload in
the system,   whose precise definition we defer to
 Sections \ref{subsub-SJF}
and \ref{subsub-SRPT}, since it is defined slightly differently for
the SJF and SRPT policies.
The value of the state just prior to zero will be denoted by
$\xi_{0-}^{w,N}$, and for $N \in \mathbb{N}$, we set
\begin{equation}
\label{alphaw}
\alpha_t^{w,N} := \xi_{0-}^{w,N} [0,x]  + \hat{\alpha}_t^{w,N} [0,x],
\qquad x, t \geq 0.
\end{equation}

We will make the following assumptions on the primitives.
Let $\bar{\alpha}^{w,N}$, $\bar{\mu}^N$ be the corresponding
fluid-scaled quantities, defined analogously to   \eqref{72}.

\begin{assumption}
\label{ass-SJFSRPT}
The following two properties hold:
\begin{enumerate}
\item[(1)] There exists some non-random $(\al^w, \mu) \in
  \C^\up_{\calM_0}\times\C_{\R}^\up$ such that
\[
(\bar\al^{w,N},\bar\mu^N)\Ra (\al^w,\mu);
\]
\item[(2)]  For each $0 < T < \infty$ one has $\int y^{-1}\al^w_T(dy)<\iy$
and that the following uniform integrability condition is satisfied:
\begin{equation}
  \label{47}
  \lim_{r\to\iy}\sup_N\p\Big(\int_{(0,\infty)}
  y^{-1}\ind_{\{y^{-1}>r\}}\bar\al^{w,N}_T(dy)>\eps\Big)=0,
  \quad\text{for every } \eps>0.
\end{equation}
\end{enumerate}
\end{assumption}

\subsubsection{Convergence results for the SJF Model}
\label{subsub-SJF}

We now describe the  state processes $\xi^{w,N}$ and $\beta^{w,N}$
for the SJF model,  which have sample paths in
$\D_\calM$ and $\D_\calM^\up$, respectively.
For $x, t >0$, let $\xi^{w,N}_t[0,x]$
represent the total work associated with jobs that
have sizes within $[0,x]$ and are present in the queue at time $t$,
not counting the job that is at the server, and  let
$\beta^{w,N}_t[0,x]$ be the total work associated with jobs that have
sizes within the interval $[0,x]$ that were sent to the server by time
$t$.  We let $\xi^{n,N}$ and $\beta^{n,N}$
denote the corresponding job count processes.
The total work and job count measures  just prior to zero  are denoted
by $\xi^{w,N}_{0-}$ and $\xi^{n,N}_{0-}$, respectively.
We also introduce another auxiliary process,   $J^N(t)$ which denotes the
residual work of the job that is in service at time $t$.
Each time the server becomes available, it admits into service
the job with the smallest job size,
where in case  there are multiple such jobs, one of them is chosen according
to some specified rule (the details of which are irrelevant for the
scaling limit).

Recalling the definitions of $\al^{w,N}$, $T^N$ and $\io^N$ from
Section \ref{subsub-common}, we see that
the following equations then describe the
system dynamics: for $t, x \geq 0$,
\begin{equation}
  \label{80}
  \xi^{w,N}_t[0,x]=\al^{w,N}_t[0,x]-\beta^{w,N}_t[0,x],
\end{equation}
 and
\begin{equation}
  \label{81}
  \beta^{w,N}_t[0,\iy)=T^N(t)+J^N(t)-J^N(0).
\end{equation}
The last two equations, together with \eqref{mun}, then show that
\begin{equation}
  \label{82}
  \xi^{w,N}_t[0,x]=\al^{w,N}_t[0,x]-\mu^N(t)
  +\beta^{w,N}_t(x,\iy)+\io^{N}(t)-J^N(t)+J^N(0),
\end{equation}
and
\begin{equation}
  \label{83}
  \xi^{w,N}_t[0,\iy)=\al^{w,N}_t[0,\iy)-\mu^N(t)+\io^{N}(t)-J^N(t)+J^N(0).
\end{equation}
The conditions reflecting prioritization according to the size of job
and non-idling, respectively, give the following
two relations:
\begin{equation}
  \label{84}
  \int_{[0,\iy)}\xi^{w,N}_t[0,x]d\beta^{w,N}_t(x,\iy)=0,
\end{equation}
\begin{equation}
  \label{85}
  \int_{[0,\iy)}\xi^{w,N}_t[0,\iy)d\io^{N}(t)=0.
\end{equation}
Relations \eqref{82}--\eqref{85} also
hold for the scaled processes (such as $\bar\al^{w,N}$)
defined by normalizing by $N$ in a matter analogous to \eqref{72},
and can be written in terms of the map $\Th$ as follows:
\begin{align}
(\bar\xi^{w,N}, \bar\beta^{w,N}, \bar\io^N)
&=\Th (\bar\al^{w,N}, \bar\mu^N+\bar J^N-\bar J^N(0)).
\label{90}
\end{align}
Note that this relation is much simpler than the corresponding
(unscaled) equation  \eqref{stoch-IDSMrel} for the hard EDF policy.
Since the map $\Th$ has only been defined when the second argument of
the map lies in $\D_{\R}^\up$, it must be argued that the sample paths
of $\bar\mu^N+\bar J^N-\bar J^N(0)$ lie in $\D_{\R}^\up$.
Indeed, this follows on writing $\bar\mu^N+ \bar J^N-J^N(0)
= (\bar\mu^N-\bar T^N) + (\bar T^N + \bar J^N-\bar J^N(0))$
and noticing that the first term lies in $\D_{\R}^\up$ by the definitions of $T^N$ and $\mu^N$,
and the second term lies in $\D_{\R}^\up$ due to \eqref{81}.

The processes $\xi^{n,N}$ and $\beta^{n,N}$ can be recovered from the
above processes
using the transformation \eqref{41}, and consequently so can the
normalized processes.  In other words, we have
\begin{equation}\label{42}
\bar\xi^{n,N}_t[0,x]=\int_{(0,x]}y^{-1}\bar\xi^{w,N}_t(dy),
\qquad
\bar\beta^{n,N}_t[0,x]=\int_{(0,x]}y^{-1}\bar\beta^{w,N}_t(dy).
\end{equation}
Denote $\al^{n,N}_t[0,x] :=
\xi^{n,N}_{0-}[0,x]+\hat{\al}^{n,N}_t[0,x]$, and let $\bar \al^{n,N}$
be the corresponding scaled quantity.

We now state the convergence result  for the SJF scheduling policy.

\begin{theorem}\label{T-SJF}
Suppose Assumption \ref{ass-SJFSRPT}(1) holds.
Then, as $N\to\infty$, we have
\begin{align}
\label{SJF-conv}
(\bar\xi^{w,N},\bar\beta^{w,N}, \bar\io^N) &\Ra(\xi^w,\beta^w,\io):=\Th (\al^w, \mu).
\end{align}
If, in addition, Assumption \ref{ass-SJFSRPT}(2) holds, then
$(\bar\xi^{n,N},\bar\beta^{n,N})\To(\xi^n,\beta^n)$, where
\begin{equation}
  \label{43}
  \xi^n_t[0,x]=\int_{(0,x]}y^{-1}\xi^w_t(dy),
\qquad
\beta^n_t[0,x]=\int_{(0,x]}y^{-1}\beta^w_t(dy).
\end{equation}
\end{theorem}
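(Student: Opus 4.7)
The plan is to treat the two assertions in sequence, leveraging the identity \eqref{90} for the first and a truncation argument for the second. For \eqref{SJF-conv}, I would apply Proposition \ref{prop2.1b}(1), which gives continuity of $\Th$ on $\C_{\calM_0}^\up\times\C_\R^\up$, together with the continuous mapping theorem, to the representation $(\bar\xi^{w,N},\bar\beta^{w,N},\bar\io^N)=\Th(\bar\al^{w,N},\bar\mu^N+\bar J^N-\bar J^N(0))$. Since Assumption \ref{ass-SJFSRPT}(1) already handles $(\bar\al^{w,N},\bar\mu^N)$ and $(\al^w,\mu)\in\C_{\calM_0}^\up\times\C_\R^\up$, it suffices to show $\sup_{t\le T}\bar J^N(t)\To 0$ for every $T<\iy$.

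The key bound is $0\le J^N(t)\le\max\{W_i:\tau_i\le t\}$, since the residual service requirement at the server cannot exceed the size of the job being processed. By \eqref{56}, each $W_i$ with $\tau_i\le T$ contributes an atom of $\bar\al^{w,N}_T$ of mass at least $W_i/N$, so $N^{-1}\max_i W_i$ is dominated by the largest atom of $\bar\al^{w,N}_T$. But $\bar\al^{w,N}_T\To\al^w_T\in\calM_0$, and weak convergence of finite measures to a measure with continuous cumulative distribution implies uniform convergence of the distribution functions; this forces $\sup_x\bar\al^{w,N}_T(\{x\})\To 0$, so $\sup_{t\le T}\bar J^N(t)\To 0$. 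Combined with Assumption \ref{ass-SJFSRPT}(1), this yields the joint convergence $(\bar\al^{w,N},\bar\mu^N+\bar J^N-\bar J^N(0))\To(\al^w,\mu)$, and the continuous mapping theorem then delivers \eqref{SJF-conv}. Continuity of the limit paths is supplied by Proposition \ref{prop2.1a}.

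For the job-count assertion, I would invoke Skorokhod's representation theorem to upgrade \eqref{SJF-conv} to almost sure uniform-on-compacts convergence. The obstacle is that the weight $y^{-1}$ in \eqref{42} blows up at zero, so CMT does not apply directly. For $\eta>0$, decompose
\[
\bar\xi^{n,N}_t[0,x]=\int_{(0,\eta]}y^{-1}\bar\xi^{w,N}_t(dy)+\int_{(\eta,x\vee\eta]}y^{-1}\bar\xi^{w,N}_t(dy),
\]
and analogously for $\xi^n$. On $[\eta,\iy)$ the weight $y^{-1}$ is bounded and continuous, and since $\xi^w_t$ is atomless, the restricted integral is a continuous functional of the measure, so the bulk terms agree in the limit uniformly in $t$ and $x$. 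For the small-$y$ residual, the relation $\bar\xi^{w,N}_t\le\bar\al^{w,N}_t$ from \eqref{80}, together with the monotonicity of $\bar\al^{w,N}$ in $t$, gives $\sup_{t\le T}\int_{(0,\eta]}y^{-1}\bar\xi^{w,N}_t(dy)\le\int_{(0,\eta]}y^{-1}\bar\al^{w,N}_T(dy)$, which is arbitrarily small in probability as $\eta\downarrow 0$ by Assumption \ref{ass-SJFSRPT}(2); the corresponding limit-side estimate uses $\int y^{-1}\al^w_T(dy)<\iy$ and dominated convergence. Sending $\eta\downarrow 0$ yields $\bar\xi^{n,N}\To\xi^n$ in $\D_\calM$, and the identical argument with $\bar\beta^{w,N}\le\bar\al^{w,N}$ in place of $\bar\xi^{w,N}$ yields $\bar\beta^{n,N}\To\beta^n$.

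The principal obstacle is the second assertion: the non-integrability of $y^{-1}$ at zero is exactly what necessitates the uniform integrability hypothesis \eqref{47}, and making the truncation uniform in both $t$ and $x$ relies on the monotonicity of $\bar\al^{w,N}$ in $t$ and the atomlessness of the workload limit $\xi^w$.
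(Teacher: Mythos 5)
Your proof is correct in its essentials and, for the second assertion, takes a genuinely different route from the paper's. For \eqref{SJF-conv}, you and the paper both start from \eqref{90} and invoke Proposition \ref{prop2.1b}(1) with the continuous mapping theorem; the only difference is in showing $\bar J^N\To 0$. The paper uses the bound $\max_{t\le T}J^N(t)\le J^N(0)\vee\max_{t\le T}(\al^{w,N}_t-\al^{w,N}_{t-})$ and the continuity of $t\mapsto\al^w_t[0,\iy)$, whereas you control the same quantity via the atom sizes of $\bar\al^{w,N}_T$ and the atomlessness of $\al^w_T$. Both are consequences of $\al^w\in\C^\up_{\calM_0}$ and are equally valid. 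For the job-count assertion, the paper proceeds by (i) proving fixed-$t$ convergence $\lan f,\bar\xi^{n,N}_t\ran\To\lan f,\xi^n_t\ran$ via the truncation $y^{-1}\wedge r$, (ii) establishing compact containment via the domination $\bar\xi^{n,N}_t\le\bar\al^{n,N}_t$ and Lemma \ref{lem-dom}, (iii) invoking Jakubowski's criterion (Lemma \ref{lem-jak}), and (iv) checking $\xi^n,\beta^n\in\C_\calM$ for joint convergence. You instead pass to a Skorokhod representation to get a.s. u.o.c. convergence of $\bar\xi^{w,N}$ and argue directly that $\sup_{t\le T}\sup_x|\bar\xi^{n,N}_t[0,x]-\xi^n_t[0,x]|\to 0$, splitting at a truncation level $\eta$. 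Your route is more elementary and, once the uniform estimate is in hand, immediately gives joint u.o.c.\ convergence without appealing to a tightness criterion for measure-valued processes. The one place where you should be more careful is the bulk term: the phrase ``the restricted integral is a continuous functional of the measure'' justifies pointwise convergence at a fixed atomless measure but does not by itself yield uniformity in $(t,x)$. To make that step rigorous, use \eqref{comp-dmdk} and the domination $\xi^w_t\le\al^w_T$ (which gives $\sup_{t\le T}Osc_\delta(\xi^w_t[0,\cdot])\le Osc_\delta(\al^w_T[0,\cdot])\to 0$) to obtain $\sup_{t\le T}\sup_y|\bar\xi^{w,N}_t[0,y]-\xi^w_t[0,y]|\to 0$ a.s.; then an integration-by-parts bound, $|\int_\eta^{x\vee\eta}y^{-1}\,d(\bar\xi^{w,N}_t-\xi^w_t)(y)|\le C(\eta)\sup_y|\bar\xi^{w,N}_t[0,y]-\xi^w_t[0,y]|$, delivers the uniformity at fixed $\eta$. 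Finally, you should note (as the paper does) that $\xi^n,\beta^n\in\C_\calM$, which in your argument follows from the dominated-convergence estimate near $y=0$ together with the continuity of $t\mapsto\xi^w_t$, so that the uniform convergence you establish indeed implies convergence in $\D_\calM$.
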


The proof of Theorem \ref{T-SJF} will rely on the following two general results on tightness
of measure-valued processes.

\begin{lemma}
\label{lem-jak}
Let $\zeta$ and $\zeta^N, N \in \mathbb{N}$,  be $\D_{\calM}$-valued
random elements defined on a probability space $(\Om,\calF,\p)$ that
satisfy $\lan f, \zeta^N\rangle \Rightarrow \langle f, \zeta\rangle$
for every $f \in \C_b[0,\infty)$. Then $\zeta^N \Rightarrow \zeta$ if
and only if the following compact containment condition is satisfied:
for each $T>0$ and $\eta>0$ there
exists a compact set $\calK_{T,\eta}\subset\calM$ such that
\begin{equation}
\label{cc}
\liminf_{N\ra\iy}\p\left(\zeta_t^N\in\calK_{T,\eta} \mbox{ for all }
t\in[0,T]\right)>1-\eta.
\end{equation}
\end{lemma}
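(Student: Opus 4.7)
The plan is to recognize this as Jakubowski's tightness criterion for $\D_\calS$-valued processes specialized to $\calS=\calM$, combined with a standard identification-of-limit argument. Recall that Jakubowski's theorem asserts that a sequence $\{\zeta^N\}$ of $\D_\calM$-valued random elements is tight if and only if (i) the compact containment condition \eqref{cc} holds, and (ii) there exists a countable family $\mathbb{F}$ of continuous maps $\calM\to\R$ that separates points in $\calM$ such that, for each $F\in\mathbb{F}$, $\{F\circ\zeta^N\}$ is tight in $\D_\R$. In our setting we take $\mathbb{F}=\{\nu\mapsto\langle f,\nu\rangle : f\in\mathbb{F}_0\}$ for some countable family $\mathbb{F}_0\subset\C_b(\R_+)$ that is dense in the sup-norm on every compact set; this $\mathbb{F}$ separates points in $\calM$ by the Portmanteau theorem.

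For the ``only if'' direction, $\zeta^N\Rightarrow\zeta$ in $\D_\calM$ forces the sequence $\{\zeta^N\}$ to be tight in $\D_\calM$, and Jakubowski's criterion (in the direction going from tightness to compact containment) then yields \eqref{cc}.

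For the ``if'' direction, the hypothesis $\langle f,\zeta^N\rangle\Rightarrow\langle f,\zeta\rangle$ for each $f\in\C_b(\R_+)$ implies in particular that $\{\langle f,\zeta^N\rangle\}$ is tight in $\D_\R$ for every such $f$, so condition (ii) of Jakubowski is verified; combined with the assumed \eqref{cc}, this gives tightness of $\{\zeta^N\}$ in $\D_\calM$. It remains to identify every subsequential limit. Fix such a limit $\tilde\zeta$, attained along a subsequence $\{\zeta^{N_k}\}$. Since the evaluation map $\eta\mapsto\langle f,\eta\rangle$ is continuous from $\D_\calM$ to $\D_\R$, the continuous mapping theorem gives $\langle f,\zeta^{N_k}\rangle\Rightarrow\langle f,\tilde\zeta\rangle$ in $\D_\R$; comparing with the hypothesis yields $\langle f,\tilde\zeta\rangle\stackrel{d}{=}\langle f,\zeta\rangle$ in $\D_\R$ for every $f\in\C_b(\R_+)$. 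To promote this to equality in distribution of the $\D_\calM$-valued processes themselves, apply the continuous mapping theorem once more to the finite product map $\eta\mapsto(\langle f_1,\eta\rangle,\ldots,\langle f_m,\eta\rangle)\in\D_{\R^m}$, then evaluate at a finite set of times $t_1,\ldots,t_k$ at which $\tilde\zeta$ and $\zeta$ are almost surely continuous. Because the collection $\{\nu\mapsto\langle f,\nu\rangle : f\in\C_b(\R_+)\}$ is point-separating on $\calM$, the resulting equality of joint distributions determines the finite-dimensional laws of $\tilde\zeta$ on such time sets, which are dense; this identifies $\tilde\zeta\stackrel{d}{=}\zeta$ and yields $\zeta^N\Rightarrow\zeta$.

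The step I expect to require the most care is the identification of the limit, specifically the passage from marginal-in-$f$ convergence of $\langle f,\zeta^N\rangle$ to joint convergence of $(\langle f_1,\zeta^N\rangle,\ldots,\langle f_m,\zeta^N\rangle)$ needed to characterize the finite-dimensional distributions of $\tilde\zeta$. The key point that makes this work is that, once tightness of $\{\zeta^N\}$ in $\D_\calM$ is in hand, further subsequences along which the $\R^m$-valued process converges jointly exist, and the hypothesis pins down each marginal; combined with point-separation of $\mathbb{F}$ and the almost-sure continuity of $\tilde\zeta$ and $\zeta$ on a co-countable set of times (a consequence of the càdlàg property), this forces the joint limit to coincide with the corresponding joint functional of $\zeta$.
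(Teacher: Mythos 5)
The paper's proof is a one-line citation: it observes that the family $\F=\{\nu\mapsto\langle f,\nu\rangle: f\in\C_b(\R_+)\}$ separates points of $\calM$ \emph{and is closed under addition}, and then invokes Jakubowski's convergence criterion \cite[Theorem 3.1]{jakub} directly. Your proof instead reconstructs that argument in two stages (tightness via Jakubowski's tightness criterion, then identification of the subsequential limit). In the identification step there is a genuine gap: from the hypothesis you know only that $\langle f,\zeta^{N}\rangle\Rightarrow\langle f,\zeta\rangle$ marginally in $f$, while the continuous mapping theorem applied to the subsequential limit $\tilde\zeta$ gives the joint law of $(\langle f_1,\tilde\zeta\rangle,\ldots,\langle f_m,\tilde\zeta\rangle)$. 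To conclude equality of finite-dimensional laws of $\tilde\zeta$ and $\zeta$ you assert that point-separation ``determines'' the joint law from the marginals, but that is false in general: matching one-dimensional marginals of a random vector does not pin down its joint distribution, and point-separation of $\F$ by itself does not close this gap.

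What closes it is precisely the fact that $\F$ is closed under addition (indeed under all real linear combinations, since $\C_b(\R_+)$ is a vector space): for any $a_1,\ldots,a_m\in\R$ one has $\sum_j a_j f_j\in\C_b(\R_+)$, so the hypothesis gives $\langle\sum_j a_j f_j,\zeta^{N}\rangle\Rightarrow\langle\sum_j a_j f_j,\zeta\rangle$, i.e.\ convergence of every linear combination. By Cram\'er--Wold this upgrades the marginal-in-$f$ identification to joint-in-$f$ identification of $(\langle f_1,\tilde\zeta_{t_1}\rangle,\ldots,\langle f_m,\tilde\zeta_{t_k}\rangle)$, which, together with point-separation, does determine the finite-dimensional laws. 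This is exactly why Jakubowski's Theorem 3.1 requires the test family to be additively closed and why the paper makes a point of noting it; your proof should record this linearity step explicitly. Once that is inserted the argument is correct, though it is a longer route to what the paper obtains by direct citation.
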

\proof
Let $\F$ be the class of functionals $F$ on $\calM$ of the form
$F=\langle f, \mu\rangle$, $\mu \in \calM$, for some $f \in
\C_b[0,\infty)$.   Then clearly $\F$ is closed under addition and separates points
(i.e., measures).  Thus the lemma follows from \cite[Theorem 3.1]{jakub}.
\qed

\skp

We now establish a useful lemma for verifying the compact containment
condition.  The proof of  Lemma \ref{lem-dom} is relegated to Appendix \ref{ap-lemdom}.

\begin{lemma}
\label{lem-dom}
Suppose the sequences $\{\zeta^N\}$ and $\{\tilde{\zeta}^N\}$ of,
respectively,
$\D_{\calM}^\up$-valued and $\D_{\calM}$-valued random elements, are such that $\{\zeta^N\}$
satisfies the compact containment condition \eqref{cc} and almost surely,
\begin{equation}
\label{dom}
 \tilde{\zeta}^N_t (A) \leq \zeta^N_t(A), \quad N \in \mathbb{N}, A \in\mathcal{B}(\R_+), t \geq 0.
\end{equation}
Then $\{\tilde{\zeta}^N\}$ also  satisfies the compact containment
condition.
\end{lemma}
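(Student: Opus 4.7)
The plan is to invoke Prohorov's theorem to characterize the compact set $\calK_{T,\eta}$ associated with $\{\zeta^N\}$ via a total-mass bound and a tightness condition, and then to exhibit a compact set for $\{\tilde\zeta^N\}$ by taking the $d_\calL$-closure of those $\nu \in \calM$ that are dominated (in the sense of \eqref{dom}) by some element of $\calK_{T,\eta}$.

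First, using the equivalence of the $d_\calL$-topology and the weak topology on $\calM$ noted near \eqref{met-Proh}, Prohorov's theorem yields that a subset $\calK \subset \calM$ is relatively compact if and only if
\begin{equation*}
\sup_{\mu \in \calK} \mu[0,\infty) < \infty \qquad \text{and} \qquad \forall\, \eps > 0,\ \exists\, R_\eps > 0 :\ \sup_{\mu \in \calK} \mu(R_\eps, \infty) \le \eps.
\end{equation*}
Applied to $\calK_{T,\eta}$, this yields a constant $M < \infty$ bounding the total mass and, for each $\eps > 0$, a radius $R_\eps$ controlling the tails. Next, I would define
\begin{equation*}
\tilde{\calK}_{T,\eta} := \overline{\bigl\{\nu \in \calM : \exists\, \mu \in \calK_{T,\eta}\ \text{with}\ \nu(A) \le \mu(A)\ \forall\, A \in \calB(\R_+)\bigr\}}.
\end{equation*}
Testing the domination with $A = [0,\infty)$ and $A = (R_\eps, \infty)$ shows that every $\nu$ in the set within the closure inherits the bounds $M$ and $\eps$ from its dominator, so the pre-closure set meets the Prohorov criterion, and hence $\tilde{\calK}_{T,\eta}$ is compact.

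To finish, on the event $E^N := \{\zeta^N_t \in \calK_{T,\eta}\ \text{for all}\ t \in [0,T]\}$, applying \eqref{dom} with dominator $\mu = \zeta^N_t$ places $\tilde{\zeta}^N_t$ in the pre-closure set, hence in $\tilde{\calK}_{T,\eta}$, for every $t \in [0,T]$, almost surely. Therefore
\begin{equation*}
\p\bigl(\tilde{\zeta}^N_t \in \tilde{\calK}_{T,\eta}\ \forall\, t \in [0,T]\bigr) \ge \p(E^N),
\end{equation*}
and taking $\liminf_{N \to \infty}$ and invoking \eqref{cc} for $\{\zeta^N\}$ yields the compact containment condition for $\{\tilde{\zeta}^N\}$. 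There is no substantive obstacle here; the only point requiring care is that \eqref{dom} is assumed to hold for \emph{all} Borel sets, which is precisely what allows it to be applied to the two specific test sets entering Prohorov's criterion.
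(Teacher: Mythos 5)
Your proof is correct and takes essentially the same route as the paper's: both rest on the Prohorov characterization of relative compactness in $\calM$ (uniformly bounded total mass plus uniformly small tails), together with the observation that the domination \eqref{dom} transfers these two properties from $\zeta^N_t$ to $\tilde{\zeta}^N_t$. The only difference is cosmetic: the paper uses the monotonicity of $t \mapsto \zeta^N_t$ (from $\zeta^N \in \D_\calM^\up$) to build its compact set $K_{T,\eta}$ out of explicit mass and tail bounds on the single terminal measure $\zeta^N_T$, whereas you work directly with the abstract compact set supplied by \eqref{cc} and close up its dominated hull, so your argument does not actually invoke the $\D_\calM^\up$ structure of $\zeta^N$ at all.
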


\noi{\bf Proof of Theorem \ref{T-SJF}}\
1. Since $t\mapsto\al^w_t[0, \iy)$ is continuous and
\[
\max_{t\in[0, T]}J^N(t)\leq J^N(0)\vee\max_{t\in[0, T]}(\al^{w,N}_t-\al^{w,N}_{t-}),\quad T>0,
\]
it follows that $\bar J^N\To 0$ as $N\to \infty$.
In light of the continuity of $\Th$ stated in Proposition \ref{prop2.1b}(1),
the first assertion then follows by an application of the continuous mapping theorem
using \eqref{90}, the limit $\bar J^N\To 0$, and
  the assumed convergence of $(\bar\al^{w,N},\bar\mu^N)\Ra (\al^w,\mu)$,
for some non-random $(\al^w,\mu)
\in\C^\up_\calM\times\C_{\R}^\up$. Moreover, Proposition \ref{prop2.1a} shows that the limit
$(\xi^w,\beta^w,\io)$ lies in $\C_{\calM} \times
\C^\up_\calM\times\C_{\R}^\up.$

2.  We start by fixing $t \geq 0$ and  $f\in\calC_b[0,\iy)$,
and showing that
\begin{equation}
  \label{44}
  \lan f,\bar\xi^{n,N}_t\ran\To\lan f,\xi^n_t\ran,  \quad \mbox{ and }
  \quad
\lan f,\bar\beta^{n,N}_t\ran\To\lan f,\beta^n_t\ran.
\end{equation}
To prove \eqref{44}, note that we may assume, without loss of generality,
that $f\ge0$.
Since $\al^w$ is assumed to be in $\C^\up_{\calM_0}$ in the second
part of Theorem \ref{T-SJF},
it follows from Proposition \ref{prop2.1a}  that $\xi^w,\beta^w\in\C_{\calM_0}$.
Hence the convergence $\bar\xi^{w,N}\To\xi^w$, as $N\to\iy$, proved in part 1 of the theorem,
implies $\bar\xi^{w,N}_t\To\xi^w_t$. Thus, for any $r  < \infty$, as
$N \rightarrow \infty$,
\begin{equation}
  \label{48}
  A_{N,r}:=\int (y^{-1}\w r)f(y)\bar\xi^{w,N}_t(dy)
  -\int (y^{-1}\w r)f(y)\xi^w_t(dy) \rightarrow 0,\quad\text{ in
    probability}.
\end{equation}
Fix $\del>0$ and $\eps>0$. Then  we have
\begin{align*}
D_{N,r}&:=\int y^{-1}f(y)\bar\xi^{w,N}_t(dy)-\int (y^{-1}\w r)f(y)\bar\xi^{w,N}_t(dy)\\
&=\int y^{-1}f(y)\ind_{\{y^{-1}>r\}}\bar\xi^{w,N}_t(dy)\\
&\le\int y^{-1}f(y)\ind_{\{y^{-1}>r\}}\bar\al^{w,N}_t(dy),
\end{align*}
where the last inequality uses \eqref{80}.
Thus, using \eqref{47} with $T = t$, one can select $r$ sufficiently large so that
\[
\sup_N\p\Big(D_{N,r}>\frac{\eps}{3}\Big)<\frac{\del}{2}.
\]
Since $(\xi^w,\beta^w,\io) = \Th(\al^w,\mu)$,
by property \eqref{16} of $\Th$, it follows that the measure $\al^w_T$
dominates $\xi^w_T$ and $\beta^w_T$,
in the sense that $\xi^w_T(B), \beta_T^w(B) \le \al^{w}(B)$ for every
Borel set $B\subset\R_+$.
Hence, the moment assumption
$\int y^{-1}\al^w_T(dy)<\iy$ implies that the same estimate holds when
$\al^w_T$ is replaced by either $\xi^w_T$ or $\beta^w_T$.
Thus, by making $r$ larger if needed, one also has
\[
C_r:=\int y^{-1}f(y)\xi^w_t(dy)-\int (y^{-1}\w r)f(y)\xi^w_t(dy)<\frac{\eps}{3}.
\]
For fixed $r$ as above, let $N_0$ be large enough such that for all $N>N_0$,
$\p(|A_{N,r}|>\eps/3)<\del/2$, which is possible due to \eqref{48}.
Combining the bounds on $A_{N,r}$, $D_{N,r}$ and $C_r$, one has
\[
\limsup_N\p\Big(\Big|\int f(y)\bar\xi^{n,N}_t(dy)-\int f(y)\xi^{n}_t(dy)\Big|>\eps\Big)<\del.
\]
Since $\del$ and $\eps$ are arbitrary, we have proved \eqref{44}.
An exactly analogous proof shows that $\lan f,\bar\beta^{n,N}_t\ran\To\lan f,\beta^n_t\ran$.

In view of Lemma \ref{lem-jak}, to show that $\bar\xi^{n,N} \Rightarrow
\xi^n$  and $\bar\beta^{n,N} \Rightarrow \beta^n$ it only remains to show that
$\{\bar\xi^{n,N}\}$ and $\{\bar\beta^{n,N}\}$ satisfy the compact containment
condition.  First note that \eqref{80} implies that for any Borel
set $B\subset\R_+$,
$\bar \xi^{w,N}_t(B)\le \bar\al^{w,N}_t (B)$ and $\bar \beta^{w,N}_t (B)
\le \bar\al^{w,N}_t(B)$.  Together with \eqref{41} and \eqref{42} this
implies that for every Borel set $B \subset \R_+$,
\begin{equation}\label{49}
\bar\xi^{n,N}_t(B)\le \bar\al^{n,N}_t(B),
\qquad\bar\beta^{n,N}_t(B)\le \bar\al^{n,N}_t(B).
\end{equation}
Now, \eqref{47}, \eqref{41} and the fact that $\bar\al^{w,N} \Rightarrow
\al^w$
imply that $\bar\al^{n,N} \Rightarrow \al^n.$  Thus by Lemma
\ref{lem-jak}, $\{\bar\al^{n,N}\}$ satisfies the compact containment
condition.  In turn,
Lemma \ref{lem-dom} and \eqref{49} together  imply that
$\{\bar \xi^{n,N}\}$ and $\{ \bar \beta^{n,N}\}$ also satisfy the
compact containment condition.  Lemma \ref{lem-jak} and the
convergence $\lan f, \bar \beta^{n,N}\ran$ and $\lan f, \bar{\xi}^{n,N}
\ran$ established above then imply that $\beta^{n,N} \Rightarrow
\beta^n$ and $\bar \xi^{n,N} \Rightarrow \xi^n$.

Since both limits $\beta^n$ and $\xi^n$ are deterministic, to deduce joint convergence, it suffices to show that both
$\beta^n$ and $\xi^n$ are members of $\C_\calM$.
To this end, we use again the fact that the measure $y^{-1}\al_T^w(dy)$,
that is finite by assumption, dominates $\xi^n_t$ and $\beta^n_t$ for all $t\in[0,T]$.
We argue that in view of this, $t\mapsto\xi^n_t$ inherits continuity from $t\mapsto\xi^w_t$.
Given $g\in\C_b(\R_+)$ and $t_k\to t$, we have, for any $\eps>0$,
\begin{align*}
\Big|\int y^{-1}g(y)\xi_{t_k}^w(dy) &-\int y^{-1}g(y)\xi_t^w(dy)\Big|\\
&\le 2\|g\|\int_{(0,\eps)}y^{-1}\al^w_T(dy)
+\Big|\int_{[\eps,\iy)} y^{-1}g(y)\xi_{t_k}^w(dy)
-\int_{[\eps,\iy)} y^{-1}g(y)\xi_t^w(dy)\Big|.
\end{align*}
The last term on the right-hand side converges to zero as $k\to\iy$, and since $\eps$ is arbitrary,
so does the left-hand side. Thus, $\xi^n\in\C_\calM$. Similarly, $\beta^n\in\C_\calM$.
This completes the proof.
\qed

\subsubsection{Convergence Results for the SRPT Model}
\label{subsub-SRPT}

We recall the primitive processes $\hat\al^{w,N}$,
$\hat\al^{n,N}$, $m^N$, $\mu^N$, $B^N$, $T^N$ and $\io^N$ introduced
in Section \ref{subsub-common}.      We denote the
 the  state processes for the SRPT model also by $\xi^{w,N}$ and
 $\beta^{w,N}$,  although they are now defined somewhat differently.  For the in-queue
job size measure, $\xi^{w,N}$, under the SRPT policy, it is more convenient to work with a version
that includes the job that is being served at the current time.   More
precisely,  $\xi^{w,N}$, is a process with
sample paths in $\D_\calM$, which now records the {\it initial job requirements}
associated with all jobs that are still in the system, i.e., that have
not yet been fully served (see Remark \ref{rem1} for our results
regarding a closely related process).
The process $\beta^{w,N}$, with sample paths in $\D_\calM^\up$, is
defined to be such  that $\beta^{w,N}_t[0,x]$ denotes the total work associated with jobs that by time $t$
have departed the system, for which the {\it initial} job size is within $[0,x]$.
The processes $\xi^{n,N}$ and $\beta^{n,N}$ denote the corresponding job counts.
As in  Sections \ref{subsub-common} and \ref{subsub-SJF}, let $\al^{w,N}=\xi^{w,N}_{0-}+\hat\al^{w,N}$
and $\al^{n,N}=\xi^{n,N}_{0-}+\hat\al^{n,N}$, and let the
quantities $\bar{\al}^{w,N}, \bar{\al}^{n,N}$ and $\bar{\mu}^N$ denote the corresponding scaled quantities
as in \eqref{72}.

We may, and will assume, without loss of generality, that all jobs
present in the system at time $t=0$ have not been processed before
(even the job that is at service at this time). Indeed, given an arbitrary
initial configuration where some jobs are partially served at time zero,
the system will behave in exactly  the same way as under an initial configuration
in which all the portions of service that were already provided are forgotten.
Thus, the initial condition $\xi^N_{0-}$, which encodes
the residual service times, will be treated as if these were the
original sizes of jobs (note that  we do not make any explicit
distributional assumptions on either the original job sizes or these
residual sizes beyond the convergence in Assumption \ref{ass-SJFSRPT}).

We now state the main convergence result for the SRPT model.

\begin{theorem}\label{T-SRPT}
Suppose Assumption \ref{ass-SJFSRPT} holds. Then
\begin{align}
\label{SRPT-conv1}
(\bar\xi^{w,N},\bar\beta^{w,N}, \bar\io^N) &\Ra(\xi^w,\beta^w,\io):=\Th (\al^w, \mu),
\end{align}
and $(\bar\xi^{n,N},\bar\beta^{n,N})\To(\xi^n,\beta^n)$, where
$\xi^n$ and $\beta^n$ are as defined in \eqref{43}.
\end{theorem}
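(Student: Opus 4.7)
The plan is to follow the template of the proof of Theorem \ref{T-SJF}: express the scaled triple $(\bar\xi^{w,N}, \bar\beta^{w,N}, \bar\io^N)$ via an MVSP with data $(\bar\al^{w,N}, \bar\mu^N)$ up to a vanishing error term, then invoke the continuity of $\Th$ from Proposition \ref{prop2.1b}(1) and the continuous mapping theorem. Once the workload-level convergence \eqref{SRPT-conv1} is established, the count-measure statement about $(\bar\xi^{n,N}, \bar\beta^{n,N})$ follows exactly as in part 2 of the proof of Theorem \ref{T-SJF}: Lemma \ref{lem-jak} reduces it to marginal weak convergence of $\langle f, \cdot\rangle$ for $f \in \C_b(\R_+)$; the uniform-integrability hypothesis \eqref{47} of Assumption \ref{ass-SJFSRPT}(2) handles the truncation at $y^{-1} > r$; and compact containment is obtained via Lemma \ref{lem-dom} with the natural dominations $\bar\xi^{n,N}_t \le \bar\al^{n,N}_t$ and $\bar\beta^{n,N}_t \le \bar\al^{n,N}_t$.

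The first step is to note that, while the balance $\xi^{w,N}_t[0,x] = \al^{w,N}_t[0,x] - \beta^{w,N}_t[0,x]$ continues to hold, the total residual workload $V^N(t) := \xi^{w,N}_t[0,\infty) - R^N(t)$, with $R^N(t)$ denoting the cumulative partial service already expended on jobs still present at time $t$, is preemption-invariant and satisfies the one-dimensional Skorokhod problem
\[
(V^N, \io^N) = \Gam\big[\al^{w,N}[0,\infty) - \mu^N\big]
\]
together with the non-idling condition. By Assumption \ref{ass-SJFSRPT}(1) and the Lipschitz continuity of $\Gam$ from Lemma \ref{lem-1d}(2), one obtains $(\bar V^N, \bar\io^N) \Ra \Gam[\al^w[0,\infty) - \mu]$, which agrees with the $x = \infty$ marginal of $\Th(\al^w, \mu)$ via \eqref{502+} of Lemma \ref{lem-altIDSP}.

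The principal obstacle is to extend this workload-level convergence to the full measure-valued convergence despite the failure of the priority condition \eqref{84} under SRPT: the server can be processing a job of large initial size whose remaining time is the smallest in the system, while jobs of smaller initial size sit in the queue. Concretely, for each $x \geq 0$ one must show that the deviation $\bar\xi^{w,N}_t[0,x] - \Gam_1[\bar\al^{w,N}[0,x] - \bar\mu^N + \bar\io^N](t)$ tends to zero uniformly on compact $t$-sets; this is the ``certain error term'' referred to in the introduction. The decisive input is that, since $\al^w \in \C_{\calM_0}^\up$, the jumps of $\bar\al^{w,N}$---which coincide with the scaled individual job sizes $W_i^N/N$---vanish uniformly as $N \to \infty$; by exploiting the SRPT preemption-chain structure (each partially-served-but-still-present job was preempted by the arrival of a shorter-remaining-time job), the cumulative initial work associated with partially-served jobs can be bounded in terms of such vanishing jumps, giving the required error estimate. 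Combining this with the Lipschitz continuity of $\Gam_1$ then yields marginal convergence $\bar\xi^{w,N}_t[0,x] \Ra \xi^w_t[0,x]$ for each $x$, and joint convergence \eqref{SRPT-conv1} follows from Lemma \ref{lem-jak} together with a compact containment argument via Lemma \ref{lem-dom} using $\bar\al^{w,N}$ as the dominating sequence.
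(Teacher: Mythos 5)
Your high-level framework---the workload-level Skorokhod relation, reduction to marginal convergence plus compact containment via Lemmas \ref{lem-jak} and \ref{lem-dom}, and the observation that the priority condition \eqref{84} fails under SRPT because a partially-served large job may have the smallest remaining time---is all correct and matches the spirit of the paper's Step 4. But there is a genuine gap in your central estimate. You assert that ``the cumulative initial work associated with partially-served jobs can be bounded in terms of [the] vanishing jumps'' of $\bar\al^{w,N}$, but the vanishing of individual scaled job sizes does \emph{not} control the number of partially-served jobs still in the system, and that number is what governs the error. The SRPT preemption chain gives strictly \emph{decreasing} initial sizes $W_{i_1}>W_{i_2}>\cdots>W_{i_k}$ for the chain members, but with no lower bound on the gaps; the chain can have $O(N)$ links, and the sum of residual works of partially-served jobs need not vanish after dividing by $N$ from vanishing jumps alone.

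The paper's proof fills this hole with a threshold device that you have not reproduced and that cannot simply be skipped: fix $u=u^N\to 0$ with $Nu^N\to\infty$, call a job $u$-served once at least $u$ units of its work have been processed, and observe that two $u$-served, still-present chain members $i<j$ must satisfy $W_j\le W_i^N(t)\le W_i-u$, since job $i$ cannot be processed while $j$ is present. This gives a \emph{gap} of at least $u$ between consecutive chain members, hence at most $\lesssim r/u$ of them among jobs of size $\le r$, and a total residual work contribution $\lesssim r^2/u$, which after scaling by $N$ is $\lesssim r^2/(Nu^N)\to 0$. For the $u$-\emph{un}served jobs the per-job error is at most $u$, so passing to the modified (starred) processes $(\al^{*,N},\xi^{*,N},\beta^{*,N})$, which \emph{do} satisfy the exact priority relation \eqref{84-}, incurs only a vanishing error. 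Without this counting argument your claim ``the required error estimate'' is not established, and the rest of the argument (which otherwise parallels the paper) does not go through. The count-measure part and the compact-containment step you propose are fine and do follow the paper's template for Theorem \ref{T-SJF}.
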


\begin{remark} {\em Note that, in contrast to the corresponding result
    for the SJF policy, namely Theorem \ref{T-SJF}, our proof of even the
    limit     \eqref{SRPT-conv1} for the SRPT policy requires both parts of Assumption
    \ref{ass-SJFSRPT}, and not just
Assumption \ref{ass-SJFSRPT}(1).  }
\end{remark}

For the proof of the theorem, and to describe the dynamics,
 it will be convenient to introduce some terminology to distinguish between the different states of jobs.
Jobs that have not departed the system are said to be {\it in the queue}
(note that this includes the job being served).
Jobs in the queue can be in one of two states: {\it partially served}, by which we refer
to a job that is either being served at the moment or has  been
previously served but was preempted by another job, or {\it unserved}, by which we mean a job that
has arrived but has not yet been served.
We further distinguish partially served jobs according to whether
$u$ units of the job size have or have not been processed,
where $u$ is a given threshold.
The main idea of the proof is as follows.  We argue that for a
suitable choice of  $u = u^N$,   at any given time only a small number of
jobs have a size that is $u$ or more units smaller than the initial
size.  On the other hand, we show that jobs in the complement set (namely partially served
jobs for which less than $u$ units of work has been processed)
can be treated as unserved, since the resulting error is small
due to the fact that their residual job sizes
do not deviate much from their initial job sizes.

To formulate this notion, we recall that  $W_i=W^N_i$ denotes the size
of job $i$, and $\tau_i = \tau_i^N$ denotes the time of arrival into
system of that job.
 Let $W_i^N(t)$ denote the residual job
size in the $N$-system at time $t$ (defined only for $t\ge\tau_i^N$).
Note that by our assumption, $W^N_i(0)=W^N_i$ for all jobs $i$ that are in the system
at time 0.
Then, as  in  \eqref{56}, we  can express the process $\al^{w,N}$ as
\begin{equation}\label{57}
\al^{w,N}_t(A)=\sum_{i=1}^\infty
\ind_{\{t\ge\tau_i^N\}}W_i^N\delta_{W_i^N}(A),  \quad A \in {\mathcal B}
(\mathbb{R}_+),
\end{equation}
with the convention that $\tau_i^N \leq 0$ for jobs that are initially
in the system, and after possibly relabeling the job sizes.
Given a parameter $u>0$, let $\theta_i = \theta^N_i (u) :=\inf\{t\ge\tau_i:W_i^N(t)\le
W_i^N-u^N\}$, and refer to job $i$ as {\it $u$-unserved} at time $t$
if $\tau^N_i\le t<\theta^N_i$. Note that this includes
unserved jobs (that have already arrived) and jobs that for which
a portion less than $u$ units of the jobs has been processed prior to that time.
We also say  that job $i$ is {\it $u$-served} at time $t$
if at least $u$ units of its size have been processed at that time
(whether it is partially served or has departed in the interval
$[0,t]$), namely $t\ge\theta_i^N$.
A job $i$ is said to be {\it $u$-short} if its original job size
satisfies $W^N_i<u$.
Note that for such a job, $\theta^N_i=\iy$, and therefore it is $u$-unserved
even at its departure.

The parameter $u$ to be used will depend on $N$.  To this end we fix a sequence
$u^N>0, N \in \mathbb{N},$ with $\lim_{N \rightarrow \infty} u^N=0$
and $\lim_{N \rightarrow \infty} Nu^N=\iy$.
In what follows, we suppress  $N$
from the constant $u^N$ (and, in particular, use the terms $u$-unserved and $u$-served for
$u^N$-unserved and $u^N$-served),  and also from the random variables $W_i^N$,
$\tau_i^N$ and  $\theta^N_i = \theta^N_i(u^N)$,  but we retain it for
all processes such as $W^N_i(\cdot)$ and $\alpha^{w,N}_{\cdot}$ that describe the
dynamics of the $N$-system.
We now introduce a certain  modified arrival
process $\al^{*,N}_t$.   Denote
\[
\wiun (t)=\max (W^N_i(t), W_i-u),
\]
and
\begin{equation}\label{73}
\al^{*,N}_t(A)=\sum_{i=1}^\infty
\ind_{\{t\ge\tau_i\}} \wiun (t)\delta_{\wiun (t)}(A),  \quad A \in
{\mathcal B} (\mathbb{R}).
\end{equation}
Note that this process has sample paths in $\D_\calM$ (in particular,
for every $t \geq 0$, $\al^{*,N}$ is a measure), though
not necessarily in $\D_\calM^\up$.

We now introduce the corresponding state processes.
Let $I^N_1(t)$ and $I^N_2(t)$ denote the
sets of $u$-unserved and, respectively, $u$-served jobs at time $t$.
Let $\xi^{*,N}$ be a process with sample paths in $\D_\calM$ recording
the residual job sizes of $u$-unserved jobs, given by
\begin{equation}\label{58}
\xi^{*,N}_t(A)=\sum_{i\in
  I^N_1(t)}\ind_{\{t\ge\tau_i\}}W^N_i(t)\del_{W^N_i(t)}(A), \quad A \in
{\mathcal B} (\R_+).
\end{equation}
Accordingly, let $\beta^{*,N}$ be a process with sample paths in $\D_\calM^\up$,
recording work that has departed from the class of $u$-unserved jobs.
More precisely,
\begin{equation}\label{59}
\beta^{*,N}_t(A)=\sum_{i\in I^N_2(t)}(W_i-u)\del_{(W_i-u)}(A), \quad A
\in {\mathcal B} (\R_+).
\end{equation}
Note that $\beta^{*,N}_t[0,x]$ is the sum of the residual job sizes at  the time of becoming
$u$-served, of $u$-served jobs whose residual job size at that time lies in the interval $[0,x]$.
Note that $u$-short jobs never become members of $I^N_2(t)$ for any $t$, and therefore their
job sizes are not recorded in $\beta^{*,N}$.
Next, let $J^{w,N}_2(t)$ denote the
total residual work of all partially served $u$-served jobs at time $t$, and let $J^{n,N}_2(t)$
denote the number of such jobs.

We  now write down the equations satisfied by the processes
$\al^{*,N}$, $\xi^{*,N}$ and $\beta^{*,N}$.
First,  note that $I^N_1(t)\cup I^N_2(t)$ is equal to the set of all jobs $i$ for which
$t\ge\tau_i$. Also note that for $i\in I_1^N(t)$, $\wiun (t)=W^N_i(t)$, while for
$i\in I_2^N(t)$, $\wiun (t)=W_i-u$.
Therefore, \eqref{58} and \eqref{59} yield
\begin{equation}
  \label{80-}
  \xi^{*,N}_t[0,x]=\al^{*,N}_t[0,x]-\beta^{*,N}_t[0,x].
\end{equation}
Next, let $r^N(t)$
denote the total amount of work done on the $u$-unserved jobs by time
$t$, that is,
$r^N(t)=\sum_{i\in I^N_1(t)}(W_{i}-W_i^N(t))$.
Recall from Section \ref{subsub-common} that $T^N (t)$ represents the
total work that was processed from  all jobs in the interval $[0,t] $.
Then we obtain from \eqref{59}
that $\beta^{*,N}_t[0,\iy)=\sum_{i\in I_2^N(t)}W_i-u|I^N_2(t)|$, while
\begin{eqnarray*}
T^N(t)= \sum_{i\in I^N_1(t)\cup I^N_2(t)} (W_i-W^N_i(t)) & =& r^N(t)+\sum_{i\in I^N_2(t)}(W_i-W^N_i(t))
\\
 &=& r^N(t)+\sum_{i\in I^N_2(t)} W_i-J^{w,N}_2(t).
\end{eqnarray*}
Combining the last two identities, we obtain
\begin{equation}
  \label{81-}
  \beta^{*,N}_t[0,\iy)=T^N(t)+J^{w,N}_2(t)-r^N(t)-u| I^N_2(t)|.
\end{equation}
Recalling $\mu^N(t)=\int_0^tm^N(s) ds$, we thus have
\begin{equation}
  \label{82-}
  \xi^{*,N}_t[0,x]=\al^{*,N}_t[0,x]-\mu^N(t)
  +\beta^{*,N}_t(x,\iy)+\io^{N}(t)-J^{w,N}_2(t)+ r^N(t)+u|I^N_2(t)|,
\end{equation}
and
\begin{equation}
  \label{83-}
  \xi^{*,N}_t[0,\iy)=\al^{*,N}_t[0,\iy)-\mu^N(t)+\io^{N}(t)-J^{w,N}_2(t)+r^N(t)+u|I^N_2(t)|.
\end{equation}
It is of crucial importance that these processes also satisfy
\begin{equation}
  \label{84-}
  \int_{[0,\iy)}\xi^{*,N}_t[0,x]d\beta^{*,N}_t(x,\iy)=0,
\end{equation}
which reflects the fact that  jobs with  residual sizes greater than
$x$ cannot be served unless there are no jobs in queue with residual sizes less
than or equal  than $x$, and
\begin{equation}
  \label{85-}
  \int_{[0,\iy)}\xi^{*,N}_t[0,\iy)d\io^{N}(t)=0,
\end{equation}
which captures the fact that the server cannot be idle if there is a
job with positive residual work still in the queue.
As before, scaled processes are denoted using the bar notation (as in
$\bar\al^{w,N}$).

\skp

\noindent
{\bf Proof of Theorem \ref{T-SRPT}. }
We start with the proof of \eqref{SRPT-conv1}, which proceeds via the following steps.
In Step 1 we show that, for fixed $T$,
$\|\bar r^N\|_T\vee\|u\bar I^N_2\|_T\vee\|\bar J^{w,N}_2\|_T\vee\|\bar J^{n,N}_2\|_T\to0$ in probability.
In Step 2 we show $\bar\al^{*,N}\To\al^w$. Step 3 shows tightness of the collection
of processes $(\al^{*,N},\mu^{N},\xi^{*,N},\beta^{*,N},\io^N)$.
Finally, in Step 4, limits are taken in \eqref{82-}, \eqref{84-} and \eqref{85-}
to obtain that every subsequential limit $(\al^w,\mu,\xi^w,\beta^w,\io)$
of the aforementioned sequence satisfies the relation $(\xi^w,\beta^w,\io)=\Th(\al^w,\mu)$,
by which the limit in probability exists.
Using estimates on the error terms from Step 1, it is then shown that the same follows regarding
$(\al^{w,N},\mu^{N},\xi^{w,N},\beta^{w,N},\io^N)$.

\skp

\noindent
{\em Step 1: }  For fixed $T$, we first show that $\|\bar J^{w,N}_2\|_T\vee\|\bar J^{n,N}_2\|_T\to0$ in probability.

To this end, note that, as a consequence of the assumed convergence
$\bar\al^{w, N}\To\al^w$ in Assumption \ref{ass-SJFSRPT}(1), one has
\begin{equation}
  \label{50}
  \lim_{r\to\iy}\sup_N\p\Big(\int_{(r,\infty)}y\bar\al^{n,N}_T(dy)>\eps\Big)=0,
  \quad\text{for every } \eps>0.
\end{equation}
To address the convergence of $\bar J^{w,N}_2$,
we shall show that for any $\eps>0$ and $\eta>0$, one has
\begin{equation}\label{53}
\PP(\|\bar J^{w,N}_2\|_T>\eta)\le\eps,\qquad \text{ for all large $N$}.
\end{equation}
Let $\eps>0$ and $\eta>0$ be given.
By \eqref{50}, there exists $\are$ so large that
\begin{equation}\label{54}
\sup_N\p\Big(\int_{(\are,\infty)} y\bar\al^{n,N}_T(dy)>\frac{\eta}{2}\Big)\le\eps.
\end{equation}
Fix such $\are$, and assume without loss of generality that $\are >1$.
Consider the $N$th system
on the time interval $[0,T]$.
For this
argument only, let the jobs be labeled according to the order of their first
admittance into service. Namely, for $i\in\N$, let the term `job $i$' refer to
the $i$th job to be admitted into service for the first time.
For $i,j\in\N$, let the notation $i<j$ stand for the order thus defined.
Let $\sig_i$ denote the time when job $i$
is first admitted into service (again, the dependence on $N$ is suppressed).
Thus, $\sig_i$ is increasing with $i$.

Next, given $t\in[0,T]$, let $I^N(t)$ denote the collection of jobs that are
$u$-served and partially served at time $t$, except the one
that is being served at that time (if such a job exists). Note that the cardinality
of this set is, by definition, $(J^{n,N}_2(t)-1)\vee0$.
Then each job in $I^N(t)$ has been served and preempted prior to time
$t$.   Moreover, for each $i\in I^N(t)$, the residual work satisfies
$W_i^N(t)\le W_i-u$.
Suppose $i,j\in I^N(t)$ with $j>i$. Then
$i$ and $j$ are both partially served, and $j$ was first admitted into service
later than $i$ was.   Due to the SRPT policy, this implies that at the time $\sig_j$ of entry into
service, the size of job $j$ is less than that of job $i$, or
equivalently,
$W_j\le W_i^N(\sig_j)$.
Moreover, it is impossible for job $i$ to be processed during the time interval
$[\sig_j,t]$, because job $j$ has not yet departed at time $t$ because, by
assumption,  $j \in I^N(t)$.  Thus,  $W^N_i(\sig_j)=W^N_i(t)$.  Since
$i \in I^N(t)$ implies $W^N_i(t) \leq W_i - u$, this means that
\[
W_j\le W_i-u, \quad \text{ whenever } i,j\in I^N(t), \ j>i.
\]
Let $\hat I^N(t)$ denote the collection of members $i$ of  $I^N(t)$ with $W_i\le r$.
As a consequence of the above display,
if $\hat I^N(t)$ is nonempty and if $i_t$ and $j_t$ denote  minimal
and, respectively, maximal members, then
\begin{equation}\label{51}
0\le W_{j_t}\le W_{i_t}-u(|\hat I^N(t)|-1)
\le \are-u|\hat{I}^N(t)|+u.
\end{equation}
Also, $J^{n,N}_2(t)\le |I^N(t)|+1\le|\hat{I}^N(t)|+\al^{n,N}_T[\are,\iy)+1$.
Then, in view of \eqref{54} and \eqref{51} and the fact that $\are >
1$,  on an event whose probability is
at least $1-\eps$, for any $\eta > 0$, we can bound
\[
\sup_{t\in[0,T]}\bar J^{n,N}_2(t)\le \frac{\are+2u}{uN}+\frac{\eta}{2},
\qquad
\sup_{t\in[0,T]}\bar J^{w,N}_2(t)\le \frac{\are(\are+2u)}{uN}+\frac{\eta}{2},
\]
for all large enough $N$.
Since $uN=u^NN\to\iy$, the above two expressions are bounded by $\eta$
for all sufficiently large $N$.  Since $\eta > 0$ is arbitrary, we obtain the asserted convergence.

Next, we show that $\|\bar r^N\|_T\vee\|u\bar I^N_2\|_T\to 0$ in probability.
By definition, the server has processed a portion of at most $u$ units of work
for each job in $I^N_1(t)$. Therefore, we have
$$\|\bar{r}^N\|_T\leq u\| \bar I^{N}_1\|_T\leq u \bar\al^{n, N}_T.$$
Again note that  $\|u\bar I^N_2\|_T\leq u \bar\al^{n, N}_T$.
Hence, by \eqref{47} and recalling that we assume $u=u^N\to0$,
we have $\|\bar{r}^N\|_T\vee\|u\bar I^N_2\|_T\to 0$ in probability.

\skp

\noindent
{\em Step 2: } We show that $\bar\al^{*,N}\To\al^w$. 

This  is basically a consequence of the fact, which we will establish
below, that $\sup_{t \in [0, T]}d_{\calL}(\bar\al^{*, N}_t, \bar\al^{w, N}_t)\to 0$ in probability.
Since $\bar\al^{*, N}_t[x, \infty)$ is dominated by $\bar\al^{w, N}_T[x, \infty)$ for all $x\geq 0$ and $t\in[0, T]$, in view
of  \eqref{54} and \eqref{comp-dmdk}, it is enough to show that for
every $\are >0$,
\begin{equation}\label{52}
\sup_{t\in [0, T]}\sup_{x\in[0, \are]} \abs{\bar\al^{*, N}_t[0, x]-\bar\al^{w, N}_t[0, x]}\to 0, \quad \text{in probability},
\end{equation}
Given $t\in[0,T]$, $x\in[0, \are]$, and $\are <\iy$,
\begin{equation}\label{5.64}
\abs{\lan \ind_{[0, x]},\bar\al^{*,N}_t\ran-\lan \ind_{[0, x]},\bar\al^{w,N}_t\ran}
= \vartheta^N_1(t) + \vartheta^N_2(t), 
\end{equation}
where  
\begin{align}\label{5.65}
\vartheta^N_1(t)&=N^{-1}\sum_i\ind_{\{t\ge\tau_i\}}\ind_{\{W_i\le
  x\}}[W_i- \wiun (t)]\nonumber
\\
&\le
N^{-1}\sum_i\ind_{\{t\ge\tau_i\}}\ind_{\{W_i\le r\}} u\nonumber
\\
&\le u\, \int_{[u,\iy)}y^{-1}\bar\al^{w,N}_T(dy),
\end{align}
and
\begin{align}\label{5.66}
\vartheta^N_2(t) &=N^{-1}\sum_{i}
\ind_{\{t\geq\tau_i\}}\ind_{\{\wiun(t)\leq x< W_i\}} \wiun (t)\nonumber
\\
& \leq N^{-1}\sum_{i} \ind_{\{t\geq\tau_i\}}\ind_{\{W_i-u\leq x< W_i\}} W_i \nonumber
\\
&\leq N^{-1}\sum_{i} \ind_{\{t\geq\tau_i\}}\ind_{\{x< W_i\leq x+ u\}} W_i \nonumber
\\
&\leq\sup_{x\geq 0}\, \bar\al_T^{w, N}(x, x+u].
\end{align}
By our assumption on $\bar\al^{w, N}$ and \eqref{comp-dmdk} we note that $\vartheta^N_2\to 0$ uniformly in $t\in[0, T]$. Thus, using \eqref{5.65}-\eqref{5.66} in \eqref{5.64}, and
using the assumption \eqref{47} together with fact that
$u=u^N\to0$ as $N\to\iy$, we obtain that the right-hand side in \eqref{5.64} converges
to zero in probability uniformly in $x\leq \are$. Thus, we have
established \eqref{52}.

\skp

\noindent
{\em Step 3: }
We now establish the tightness of $(\bar\al^{*,N},\bar\mu^{N},\bar\xi^{*,N},\bar\beta^{*,N},\bar\io^N)$.

From Step 2 it is clear that
$\{(\bar\al^{*, N}, \bar{\mu}^N, \bar\io^N), N\geq 1\}$ is tight. We note that for all $x\geq 0$,
$$ \sup_{t\in[0, T]}\bar\xi^{*,N}_t[x, \iy)\vee \bar\beta^{*,N}_t[x, \iy)\leq \bar\al^{w, N}_T[x, \iy).$$
Thus $\{(\bar\xi^{*,N},\bar\beta^{*,N}), N\geq 1\}$ satisfies the
compact containment condition stated in Lemma \ref{lem-jak}.
Again for any $0\leq s<t\leq T$, we get from \eqref{58} that
$$\sup_{x\in\R_+}\abs{\bar{\xi}^{*, N}_t[0, x]-\bar{\xi}^{*, N}_s[0, x]}\leq (\bar{\al}^{w, N}_t(\R_+)-\bar{\al}^{w, N}_s(\R_+))
+ (\bar\mu^N_t-\bar\mu^N_s).$$
Thus by our assumption on $(\bar\al^{w, N}, \bar\mu^N)$ and \eqref{comp-dmdk} we see that the oscillation of $\bar{\xi}^{*, N}$ with respect to $d_\calL$ tends to zero in probability.
A similar fact also holds for $\bar\beta^{*, N}$ due to the relation \eqref{80-}. This establishes tightness of $(\bar\xi^{*, N}, \bar\beta^{*, N})$ \cite[Corollary~3.7.4]{ethier-kurtz}.
Moreover, it is readily seen that any sub-sequential limit of
$(\bar\al^{*,N},\bar\mu^{N},\bar\xi^{*,N},\bar\beta^{*,N},\bar\io^N)$
is continuous in the variable $t$ \cite[Theorem~3.10.2]{ethier-kurtz}.

\skp

\noindent
{\em Step 4: }
Now we characterize the limits of $\Sig^{*,N}:=(\bar{\al}^{*,N}, \bar{\mu}^{N},\bar{\xi}^{*,N},\bar{\beta}^{*,N},\bar{\io}^N)$,
and in turn, of $\Sig^{w,N}:=(\bar{\al}^{w,N},\bar{\mu}^{N},\bar{\xi}^{w,N},\bar{\beta}^{w,N},\bar{\io}^N)$.

Given any subsequence of $\Sig^{*,N}$ which converges, and denoting by
$\Sig=(\al^{w}, \mu, \xi^w, \beta^w, \io)$
its limit in distribution, we take limits in equations
\eqref{81-}, \eqref{82-}, \eqref{84-} and \eqref{85-}.
Note that the sample paths of $\Sig$ are in
$\C^\up_\calM\times\C^\up_{\R}\times\C_\calM\times\C^\up_\calM\times\C^\up_\R$
due to Step 3.
Since for any $\del>0$, we have
$$\sup_{t\in[0, T]}\sup_{x\geq 0}\, \xi^{*, N}_t[x, x+\del] \leq \sup_{x\geq 0}\, \al^{w, N}_T[x,
x+\del+u],
$$
and $u=u^N\to 0$, it follows from Assumption \ref{ass-SJFSRPT}(1)
that $\xi^w\in\C_{\calM_0}$. Using \eqref{80-}, we also have $\beta^w\in\C^\up_{\calM_0}$.
Due to this property, relations \eqref{84-} and \eqref{85-} are preserved under the limit.
Hence, using the estimates from Step 1 in \eqref{81-} and \eqref{82-}, it follows that
$\Sig$ satisfies the four hypotheses of Definition \ref{defi-1}. As a result,
$(\xi^w,\beta^w,\io)=\Th(\al^w,\mu)$. Since this holds for any subsequential limit,
we conclude that $\Sig^{*,N}$ converges in probability to $\Sig$.

Next we obtain the limit of $(\bar{\xi}^{w,N}, \bar{\beta}^{w,N})$ by comparing these processes to
$(\bar{\xi}^{*,N},\bar{\beta}^{*,N})$.
Given a test function $g\in C_b(\R)$, and given $t\in[0,T]$,
it follows from \eqref{58} that
\[
\lan g,\bar\xi^{w,N}_t\ran-\lan g,\bar\xi^{*,N}_t\ran
= \frac{1}{N}\sum_{i=1}^\iy\ind_{\{t\ge\tau_i\}}\ind_{\{W_i^N(t)>0\}}W_ig(W_i)
-\frac{1}{N}\sum_{i\in I^N_1(t)}\ind_{\{t\ge\tau_i\}}W_i^N(t)g(W_i^N(t))).
\]
For a $u$-unserved job $i$, $W_i-W_i^N(t)\le u$, provided $t\ge\tau_i$.
Hence, it follows that
\begin{align*}
|\lan g,\bar\xi^{w,N}_t\ran-\lan g,\bar\xi^{*,N}_t\ran|
&\le Osc_u(g)\bar\al^{w,N}_T+\bar r^N\|g\|_\iy
+\frac{2\|g\|_\iy}{N} \, \sum_{i\in I_2^N(t)}\ind_{\{t\ge\tau_i\}}\ind_{\{W_i^N(t)>0\}}W_i\\
&\le
Osc_u(g)\bar\al^{w,N}_T+\bar r^N\|g\|_\iy+2\|g\|_\iy\left\{\are \bar J^{n,N}_2(t)+\bar\al^{w,N}_t[\are,\iy)\right\},
\end{align*}
for any $\are$. Sending first $N\to\iy$ and then $\are\to\iy$,  shows the convergence in probability
to zero of the left-hand side, uniformly in $t\in[0,T]$.
A similar estimate on $|\lan g,\bar\beta^{w,N}_t\ran-\lan g,\bar\beta^{*,N}_t\ran|$
follows by appealing to \eqref{80-}. Thus, the convergence of $(\xi^{w,N},\beta^{w,N})$
follows from that of $(\xi^{*,N},\beta^{*,N})$.
The proof of part 1 is now complete.

Finally, based on part 1, the proof of part 2 of the theorem follows along the lines
of the proof of part 2 of Theorem \ref{T-SJF}.
\qed

\begin{remark}\label{rem1}\rm
It is natural to associate a measure ${\upxi}^{w, N}\in \D_\calM$ with the queue length process defined by
$$\upxi^{w, N}_t(A)= \{\text{total amount of jobs in system at time $t$ with their residual job size in $A$}\},$$
where $A\in\calB(\R)$.
Also, define $\upbeta^{w, N}\in\D_\calM^\up$, as
$$\upbeta^{w, N}_t(A)= \{\text{total amount of work done by time $t$ on the jobs having initial job size in $A$}\},$$
for $A\in\calB(\R)$. We define $\upgamma^N(x, t)$ to be the total amount of jobs present in the system at time $t$
that have residual  job size less than $x$ and initial job size strictly bigger than $x$. Then one readily obtains
the following balance equation
\begin{equation}\label{000-}
\upxi^{w, N}_t[0, x]=\al_t^{w, N}[0, x]-\upbeta^{w, N}_t[0, x]-\upgamma^N(t, x),\quad x\in\R_+, \; t\geq 0.
\end{equation}
Note that for any $t, x>0$, there could be at most one job present in the system at time $t$
with residual  job size less than $x$ and initial job size strictly
bigger than $x$. Thus,
\begin{equation}\label{001-}
\sup_{t\in[0, T]}\sup_{x\in\R_+}\bar\upgamma^N(t, x)\leq (\bar\al_T^{w, N}-\bar\al_{T-}^{w, N})(\R_+)\to 0, \quad
\text{in probability},
\end{equation}
where the right-hand side  follows from Assumption~\ref{ass-SJFSRPT}(1).
On the other hand, from \eqref{58} and Step~1 above, we have
\begin{equation}\label{002-}
\sup_{t\in[0, T]}\sup_{x\in\R_+}\, \abs{\bar\upxi^{w, N}_t[0, x]-\xi_t^{*, N}[0, x]}\leq \|\bar{J}_2^{w, N}\|_T\to 0, \quad \text{in probability}.
\end{equation}
On  combining \eqref{80-}, \eqref{52}, \eqref{000-}-\eqref{002-}, we obtain
$$\sup_{t\in[0, T]}\sup_{x\in\R_+}\abs{\bar\upbeta^{w, N}_t[0, x]-\bar\beta^{*, N}_t[0, x]}\to 0, \;
\text{in probability}. $$
Thus, by Step~4 above, we see that $(\bar\upxi^{w, N}, \bar\upbeta^{w, N})\Rightarrow \Theta(\al^w, \mu)$.
It is also easily seen that one can analogously define $\upxi^{n, N}, \upbeta^{n, N}$, associated to the job
count process, and obtain a result similar to Theorem~\ref{T-SRPT}.
\end{remark}

\appendix

\section{Proof of Lemma \ref{lem-0}}
\label{subs-aplem0}
\manualnames{A}

In this section, we give the proof of Lemma \ref{lem-0} which states
various properties of $D_\calM$.   Let $\{\zeta^n\} \subset \D_\calM$ be a sequence such that
 $\zeta^n\to \zeta$ for some $\zeta \in D_\calM$.  Then $\zeta^n_t\to \zeta_t$
in $\calM$ at any point of continuity $t$ of $\zeta$.
Thus, if $\zeta^n\in\Dup$ for every $n \in \mathbb{N}$, and $t_1<t_2$ are two continuity points of $\zeta$,
it follows by weak convergence that $0 \leq \lan f,\zeta_{t_1}\ran\le\lan f,\zeta_{t_2}\ran$
for $f\in\C_{b,+}(\R_+)$. If $t_1$ (similarly, $t_2$) is not a continuity point,
argue by selecting continuity points $t^\ell$, such that $t^\ell
\downarrow t_1$, and use the fact that
$\zeta_{t^\ell}\to\zeta_{t_1}$ in $\calM$ to deduce that $0 \leq \lan f,\zeta_{t_1}\ran\le\lan f,\zeta_{t_2}\ran$.
This shows that  $\zeta \in \Dup$, and hence that $\Dup$ is a closed subset of $\D_\calM(\R_+)$.

To establish property 2, fix $\zeta\in\Dup$ and $0 \leq x < y$.
Then by the definition of $\D_\calM^\uparrow$,
$\zeta[0,x]$ and $\zeta(x,y]$ are
non-negative and non-decreasing in $t$. Let $t, t_n \in\R_+$ be such
that
$t_n\downarrow t$ as $n\to\iy$.
Since $\zeta\in\D_\calM$, $\zeta_{t_n}\to \zeta_{t}$ as $n\to\iy$.
Since $[0,x]$ is a closed subset of $\R_+$, by the Portmanteau
theorem,
\[
\limsup_{n\to\iy}\zeta_{t_n}[0, x]\leq \zeta_t[0,x].
\]
On the other hand, since $\zeta \in \Dup$, by monotonicity, one has
$\zeta_t[0,x]\le\zeta_{t_n}[0,x]$.
As a result, $\lim_{n\to\iy}\zeta_{t_n}[0,x]$ $=$ $\zeta_t[0,x]$, showing that
$\zeta[0,x]$ is a member of $\D_\R$.  Since $\zeta(x,y) = \zeta[0,y] -
\zeta[0,x]$,  $\zeta(x,y]$ also lies in $\D_\R$.
Combined with the monotonicity
property proved earlier, this gives $\zeta[0,x]\in\D_\R^\up$ and
$\zeta(x,y]\in \D_{\R}^\up$. For the converse it is enough to show that $\lan f, \zeta_t\ran$ is non-decreasing in $t$
for every $f\in\C_{b,+}(\R_+)$ with compact support in $[0, \infty)$. Now any continuous function $f$ with compact support can be approximated uniformly over $\R_+$ by functions of the form
$f(0)\ind_{[0, s_0]}+ \sum_{i\geq 1} f(s_i)\ind_{(s_{i-1}, s_i]}$ where $\{0<s_0<s_1<\cdots\}$ forms a finite partition of
$[0, \infty)$. Therefore if $\zeta[0, s_0]$ and $\zeta(s_{i-1}, s_i]$ are non-decreasing, we have $\lan f, \zeta_t\ran$ non-decreasing in $t$ for $f\in\C_{b,+}(\R_+)$ with compact support.

We now turn to the proof of \eqref{505}.
Arguing by contradiction,
assume there exist  $\del>0$ and a sequence $\{s_n\}\subset [0, t]$ such that
\begin{equation}\label{506}
\zeta_{s_n}(x, x_n]\geq \del, \quad \text{for all}\ n.
\end{equation}
Since the sequence $\{s_n\}$ lies  in the compact set $[0,t]$,  there
exists $s \in [0,t]$ and a subsequence, which we denote again by
$\{s_n\}$, such $s_n \rightarrow s$.  By choosing a further
subsequence, if necessary, we can assume that one of the following holds: either $s_n\uparrow s$ or
$s_n\downarrow s$  as $n\to\iy$.
If $s_n\uparrow s$ then using the monotonicity of $t \to \zeta_t$ in
$\calM$ and  Lemma \ref{lem-0}(2), we see that
$ 0 \leq \zeta_{s_n}(x, x_n] \leq \zeta_{s}(x, x_n]$.
Since $\zeta_{s}(x, x_n] \to 0$  as $n\to\iy$,
this contradicts \eqref{506}. Now, consider the case when
$s_n\downarrow s$.
Fix $\eps > 0$.  Then, for all sufficiently large $n$, we have
$x_n < x+\eps/2$ and,
due to  the right-continuity of $t \ra \zeta_t$, we have
$\zeta_{s_n}[0, x+\eps/2]\leq \zeta_{s_n} [0, x+ \eps) \leq \zeta_s[0, x+\eps)+\eps/2$.
Therefore, for all large $n$, using
\eqref{506},  the monotonicity of $t \mapsto \zeta_t$ and the above
properties, we obtain
\begin{eqnarray*}
\del\leq \zeta_{s_n}[0, x_n]-\zeta_{s_n}[0, x] &\leq &\zeta_{s_n}[0, x_n]-\zeta_{s}[0, x]
\\
&\leq &\zeta_{x_n}\left[0, x+\frac{\eps}{2}\right]-\zeta_{s}[0,x]
\\
&\leq &\zeta_{s}[0, x+\eps]-\zeta_{s}[0, x]+\eps/2\\
& = & \zeta_s(x,x+\eps]+\eps/2.
\end{eqnarray*}
Sending $\eps\to 0$, the right-hand side goes to zero, which yields a
contradiction.  This proves the first limit in \eqref{505}.  The proof
of the second limit is exactly analogous, and is thus omitted.

We turn to the proof of the last property.
Since the Borel $\sigma$-field of $\D_{\calM}$ is generated by finite
dimensional projections, it suffices to show the measurability of the map
${\mathcal T}_t: (S, {\mathcal S}) \mapsto (\calM, {\mathcal B}(\calM))$,
defined by ${\mathcal T}_t (s)=({\mathcal T}(s))_t$.
In turn, to show the latter, by the definition of the weak topology on $\calM$,  it suffices to show that for every $f \in \C_{b}(\R_+)$, the
map ${\mathcal T}_t^f:(S, {\mathcal S}) \mapsto (\R, {\mathcal
  B}(\R))$ given by
${\mathcal T}_t^f (s):=\lan f,  {\mathcal T}_t(s)\ran,$ is measurable for every
 $f\in\C_{b}(\R_+)$.
Now, define $\frH_1:=\{\ind_{[0, a]}\ :\
a\in\R_+ \}$, $\bar{\frH}_1 := \{\ind_{[0,\infty)} \} \cup \frH_1$ and
$$
\frH :=\{f\ :\ f\ \text{is bounded, Borel measurable on}\ \R_+\ \text{and}\
T_t^f\ \text{is also measurable}\}.
$$
Thus, prove the lemma, it suffices to show that if $\frH_1 \subset \frH$,
then $C_{b}(\R_+) \subset \frH$.  If $\frH_1 \subset \frH$, then since
$\ind_{[0,\infty)} = \lim_{a \rightarrow \infty}
\ind_{[0,a]}$, the monotone convergence theorem shows that
$T_t^f$ is also measurable for $f = \ind_{[0,\infty)}$, and hence,
$\bar{\frH}_1 \subset \frH$.
Clearly, $\frH$
is a vector space and hence,
contains constants because  $\bar{\frH}_1$ contains the function
that is constant and equal to one, and $\bar{\frH}_1 \subset \frH$.
Also, suppose $f$ is bounded and  $f_n\uparrow f$ pointwise for
$f_n\in\frH$, $n \in \mathbb{N}$.
 Then the bounded convergence theorem shows that $T_t^{f} = \lim_{n
   \rightarrow \infty} T_t^{f_n}$ and hence,  $f\in\frH$.
Furthermore, $\bar{\frH}_1$ is
closed under finite products. Hence, by the functional version of the
monotone class theorem (see \cite[Theorem 6.1.3]{DurBook10}), $\frH$ contains all
functions that are  measurable with respect to the $\sigma$-field
generated by $\bar{\frH}_1$.  Since  $\bar{\frH}_1$ generates the Borel $\sigma$-field on $\R_+$,
$\frH$ contains all bounded Borel measurable functions on $\R_+$, and  in particular,
contains $\C_{b}(\R_+) \subset \frH$. This completes the proof of
property 4.  \qed

\section{Proof of Lemma \ref{lem6}}\label{app2}
\manualnames{B}

We shall work here with the filtration $\{\calF_t\}$
obtained by augmenting the usual way the filtration
$\sig\{\xi_s,\beta_s,\io_s,\rho_s,\,s\le t\}$.
The optional sets and processes defined below will be with respect to this filtration,
and the measurable sets will be $\calG:=\cal{B}(\R_+)\times\calF_\iy$-measurable
where $\cal{B}(\R_+)$ are the Borel sets of $\R_+$.

We begin by showing that the set
\[
\Gam=\{(t,\omega)\in[0,T)\times\Om:\sigma_t(\omega)>t+\del,
\ \rho(t+n^{-1})>\rho(t)\ \mbox{for all } n\}
\]
is $\calG$-measurable.
For any $A\in\calB[0,\iy)$, $\xi_t(\om)(A)$ is $\{\calF_t\}$-measurable.
In particular for $A=[0,t+a],$ $\xi_t(\om)[0,t+a]$ is
$\{\calF_t\}$-measurable. Further, as shown in Lemma \ref{lem2},
\[
t\mapsto\xi_t(\om)[0,t+a] \text{ is right continuous.}
\]
It follows that  for each  $a$, $(t,\om)\mapsto\xi_t(\om)[0,t+a]$ is
optional and in particular, $\calG$-measurable.
As a result, the set
\[
\{(t,\om)\in[0,T)\times\Om:\sig_t(\om)>t+\del\}
=\bigcup_{n=1}^\iy\{(t,\om):\xi_t[0,t+\del+n^{-1}]=0\}
\]
is an optional set and therefore $\calG$-measurable.
Next, note that $\rho(t), t \geq 0,$ is a continuous, adapted process, and thus $\calG$-measurable.
Hence $\rho (t+n^{-1})-\rho(t)$
is $\calG$-measurable for every $n$. It follows that $\Gam$ is $\calG$-measurable.

By the Section Theorem for measurable sets (see e.g.\ Sharpe \cite{sharpe} p.\ 388
Theorem A5.8), there exists an $\calF_\iy$-measurable random variable $\tau$ with
values in $[0,T)\cup\{\iy\}$, so that $\llbracket\tau\rrbracket\subset\Gam$,
where $\llbracket\tau\rrbracket$ is the graph
\[
\{(t,\om)\in[0,T)\times\Om:\tau(\om)=t\},
\]
and
\[
\p(\tau<\iy)=\p(\text{there exists $t$ so that } (t,\om)\in\Gam).
\]
Since the expression on the right-hand side is equal to  $\p(E_0)$, the result follows.
\qed

\section{Proof of Lemma \ref{lem-dom}}
\label{ap-lemdom}

\manualnames{C}

We now  present the proof of  Lemma \ref{lem-dom}.
Fix $T < \infty$ and  $\eta>0$.  For constants $k < \infty$ and $r_n, n\in\N,$ chosen
below, denote
\[
\Om_0^N :=\{\zeta^N_T[0,\infty)<k\},\qquad\Om^{N,n}:= \left\{\zeta^N_T(r_n,\iy)<
\frac{1}{n}\right\}.
\]
Recall that that a
set $C \subset \calM$ is relatively compact if $\sup_{\nu\in
C}\nu(\R_+)<\iy$ and for every positive $\eps$, there exists a
compact set $K\subset\R_+$ such that $\sup_{\nu\in C}\nu(K^c)<\eps$.
Then, by the assumption that $\{\zeta^N_T\}$ satisfies \eqref{cc},  it follows that $k<\iy$ can be chosen
so that, for every $N$, $\p(\Om_0^N)>1-\eta/2$, and $r_n<\iy$, $n\in\N,$
can be chosen so that $\p(\Om^{N,n})>1-2^{-n-1}\eta$. Fix such $k$
and $\{r_n\}$, and define
$\Om^N:=\Om_0^N\cap[\cap_{n\ge1}\Om^{N,n}]$.
Then one has $\p(\Om^N)>1-\eta$ for every $N$. Moreover, for every $N$, on the event
$\Om^N$ one has $\zeta^N_T\in K_{T,\eta}$,
where
\[
K_{T,\eta}:= \{\nu\in\calM:
\nu(\R_+)<k \text{ and } \nu(r_n,\iy)<1/n
\text{ for all } n\in\N\}.
\]
By \eqref{dom} and the monotonicity of $t\mapsto \zeta^N_t$,
we obtain
\[
\p(\tilde\zeta^N_t\in K_{T,\eta} \text{ for all } t\in[0,T])>1-\eta,\qquad \text{for all } N.
\]
Note that
\[
\inf_{\text{compact } C\subset\R_+}\sup_{\nu\in K_{T,\eta}}\nu(C^c)\leq
\inf_{n}\sup_{\nu\in K_{T,\eta}}\nu((r(n),\iy))=0,\]
and
\[
\sup_{\nu\in K_{T,\eta}}\nu[0,\infty) \le k.
\]
It follows that $K_{T,\eta}$ is relatively compact in $\calM$,
and we have thus shown that \eqref{cc} holds for $\{\tilde\zeta^N\}$ with
 $\calK_{T,\eta}$ equal to the closure of $K_{T,\eta}$ in the Levy
 metric.
\qed

\skp

\paragraph{Acknowledgment.}
The research of RA was supported in part by grant 1315/12
of the Israel Science Foundation.
The research of HK was supported in part by grant 764/13
of the Israel Science Foundation.
The research of KR was supported in part by grant CMMI-1407504
of the National Science Foundation.
The authors thank Subhamay Saha for comments on an earlier version of the manuscript.

\footnotesize

\bibliographystyle{is-abbrv}
%\bibliography{refs}

\def\cftil#1{\ifmmode\setbox7\hbox{$\accent"5E#1$}\else
  \setbox7\hbox{\accent"5E#1}\penalty 10000\relax\fi\raise 1\ht7
  \hbox{\lower1.15ex\hbox to 1\wd7{\hss\accent"7E\hss}}\penalty 10000
  \hskip-1\wd7\penalty 10000\box7}

\end{document}